\title{Hodge theory of classifying stacks}
\author{Burt Totaro}
\date{  }
\def\Z{\text{\bf Z}}
\def\Q{\text{\bf Q}}
\def\C{\text{\bf C}}
\def\F{\text{\bf F}}
\def\N{\text{\bf N}}
\def\arrow{\rightarrow}
\def\inj{\hookrightarrow}
\def\imp{\Rightarrow}
\def\g{\mathfrak{g}}
\def\p{\mathfrak{p}}
\def\l{\mathfrak{l}}
\def\t{\mathfrak{t}}
\def\uu{\mathfrak{u}}
\def\gl{\mathfrak{gl}}
\def\sl{\mathfrak{sl}}
\def\sp{\mathfrak{sp}}
\def\so{\mathfrak{so}}
\DeclareMathOperator{\Ext}{Ext}
\DeclareMathOperator{\Hom}{Hom}
\DeclareMathOperator{\cha}{char}
\DeclareMathOperator{\et}{et}
\DeclareMathOperator{\dR}{dR}
\DeclareMathOperator{\height}{ht}
\DeclareMathOperator{\ad}{ad}
\DeclareMathOperator{\Spin}{Spin}
\DeclareMathOperator{\Spec}{Spec}
\DeclareMathOperator{\Ho}{H}
\def\Sp{Sp}
\def\Sq{Sq}
\def\rad{\text{rad}}
\def\Cech{C}
\begin{document}
\maketitle
\newtheorem{theorem}{Theorem}[section]
\newtheorem{corollary}[theorem]{Corollary}
\newtheorem{proposition}[theorem]{Proposition}
\newtheorem{lemma}[theorem]{Lemma}

\theoremstyle{definition}
\newtheorem{definition}[theorem]{Definition}
\newtheorem{example}[theorem]{Example}

\theoremstyle{remark}
\newtheorem{remark}[theorem]{Remark}

This paper creates a correspondence between
the representation theory of algebraic groups and the topology
of Lie groups. In more detail, we compute the Hodge and de Rham
cohomology of the classifying space $BG$ (defined as etale cohomology
on the algebraic stack $BG$) for reductive groups $G$
over many fields, including fields of small characteristic. These
calculations have a direct relation with representation theory, yielding
new results there. Eventually, $p$-adic Hodge theory
should provide a more subtle
relation between these calculations in positive characteristic
and torsion in the cohomology of the classifying space $BG_{\C}$.

For the representation theorist, this paper's interpretation
of certain Ext groups (notably for reductive groups in positive
characteristic) as Hodge cohomology groups
suggests spectral sequences that were not obvious in terms of Ext groups
(Proposition \ref{ls}).
We apply these spectral sequences to compute
Ext groups in new cases. The spectral sequences form
a machine that can lead to further calculations.

One main result is an isomorphism between the Hodge cohomology
of the classifying stack $BG$
and the cohomology of $G$ as an algebraic group with coefficients
in the ring $O(\g)=S(\g^*)$ of polynomial functions
on the Lie algebra $\g$ (Theorem \ref{iso}):
$$H^i(BG,\Omega^j)\cong H^{i-j}(G, S^j(\g^*)).$$
This was shown by Bott over a field of characteristic 0 \cite{Bott},
but in fact
the isomorphism holds integrally. More generally, we give an analogous
description of the equivariant Hodge cohomology of an affine scheme
(Theorem \ref{equivariant}). This
was shown by Simpson and Teleman in characteristic 0
\cite[Example 6.8(c)]{ST}.

Using that isomorphism, we improve the known results on the cohomology
of the representations $S^j(\g^*)$. Namely, by Andersen, Jantzen,
and Donkin, we have $H^{>0}(G,O(\g))=0$ for a reductive group $G$
over a field of characteristic $p$ if $p$ is a ``good prime'' for $G$
\cite[Proposition and proof of Theorem 2.2]{Donkin},
\cite[II.4.22]{Jantzen}. We strengthen that
to an ``if and only if'' statement
(Theorem \ref{non-torsion}):

\begin{theorem}
Let $G$ be a reductive group over a field $k$ of characteristic $p\geq 0$.
Then $H^{>0}(G,O(\g))=0$ if and only if $p$ is not a torsion prime
for $G$.
\end{theorem}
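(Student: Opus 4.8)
The plan is to prove the two implications separately, after translating through Theorem~\ref{iso}. Since $H^i(BG,\Omega^j)=H^{i-j}(G,S^j(\g^*))$ vanishes automatically for $i<j$, the condition $H^{>0}(G,O(\g))=0$ is equivalent to the Hodge cohomology of $BG$ being concentrated on the diagonal $i=j$, where it equals $\bigoplus_j H^0(G,S^j(\g^*))=O(\g)^G$. So one must show that this diagonality holds exactly when $p$ is not a torsion prime. The implication ``$p$ good $\Rightarrow$ diagonality'' is the theorem of Andersen--Jantzen--Donkin; what is left is to widen ``good'' to ``not a torsion prime'' on that side, and to show that a torsion prime genuinely forces off-diagonal classes.

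For ``$p$ not a torsion prime $\Rightarrow H^{>0}(G,O(\g))=0$'' I would follow the Andersen--Jantzen--Donkin argument and pin down where it uses goodness. That proof uses a nondegenerate invariant form to replace $O(\g)$ by $S(\g)$, Kostant's theorem to exhibit $O(\g)$ as a free module over $O(\g/\!/G)$ whose fibre is the coordinate ring of the nilpotent cone $\mathcal N$, and Donkin's theorem that $O(\mathcal N)$ has a good filtration, which gives $H^{>0}(G,O(\mathcal N))=0$ and hence $H^{>0}(G,O(\g))=0$. After reducing to a simple group---when $p$ is not a torsion prime the isogeny kernels are prime to $p$, hence linearly reductive in characteristic $p$, so a central torus and an isogeny do not change the relevant side of the equivalence---the primes that are bad but not torsion are $p=2$ for $\Sp_{2n}$ and $p=3$ for $G_2$, and for these I would verify directly that $O(\g)$, equivalently $O(\mathcal N)$, still has vanishing higher cohomology. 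I expect characteristic $2$ for the symplectic group to be the principal obstacle here: the invariant form on $\sp_{2n}$ degenerates, so the good-prime proof does not transcribe, and one needs a hands-on study of the $B$-module filtration of $O(\mathcal N)$ in that case, with a separate check for $G_2$ at $p=3$.

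For the converse I argue by contraposition: assuming $p$ is a torsion prime for $G$, I produce a nonzero class in $H^i(BG,\Omega^j)$ with $i>j$. The starting point is the classical fact that $G$ then has a maximal-rank subsystem subgroup $H$ (possibly $H=G$) with $p$ dividing $|\pi_1(H)|$. Since $\mathfrak h$ splits off $\g$ as an $H$-module and $k$ splits off $O(\g/\mathfrak h)$, it suffices to show $H^{>0}(H,O(\mathfrak h))\neq 0$ and then to transfer non-vanishing from $H$ to $G$; in other words one may reduce to the case $p\mid|\pi_1(G)|$. In that case write $G=\widetilde G/\mu$ with a central $\mu_p\subseteq\mu$: in characteristic $p$ the group scheme $\mu_p$ is infinitesimal with trivial adjoint action, so by Theorem~\ref{iso} its Hodge cohomology is $H^*(\mu_p,k)$, which is $k[a]$ for $p=2$ and $\Lambda(a)\otimes k[b]$ for $p$ odd, hence nonzero in every cohomological degree, with a class in each bidegree $(i,j)$ for $i\geq j\geq 0$. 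I would push these down along the gerbe $B\mu\to B\widetilde G\to BG$, using the (already proved) diagonality of the Hodge cohomology of $B\widetilde G$ when $\widetilde G$ has no torsion prime to see that not all of them can die in $H^*(BG,\Omega^*)$.

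The transfer from a subgroup $H$ to $G$ is the real obstacle on the non-vanishing side, since restriction in rational cohomology is not injective. I expect the workable route is not a single uniform argument but a reduction, via the structure theory of torsion primes, to the finitely many minimal cases---$\Spin_7$ and $G_2$ at $p=2$, $F_4$, $E_6$, $E_7$ at $p=2,3$, $E_8$ at $p=2,3,5$, and $PGL_p$ at $p$---for which the Hodge cohomology, equivalently the $\Ext$ groups of Theorem~\ref{iso}, is computed elsewhere in this paper and is visibly not diagonal. A possible uniform alternative is to locate the obstruction in the first Frobenius kernel: $H^*(BG_1,\Omega^*)=H^*(G_1,O(\g))$ exceeds its expected value precisely when $p$ is a torsion prime, and one would then propagate this excess through the spectral sequence of the extension $1\to G_1\to G\to G^{(1)}\to 1$. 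Making one of these precise, together with the characteristic-$2$ symplectic calculation, is where the substance of the proof lies.
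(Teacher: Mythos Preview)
Your approach is entirely different from the paper's, and the gaps you flag are genuine and not closed by the methods you sketch. The paper gives a short, uniform proof with no case analysis, using the Leray--Serre-type spectral sequence of Proposition~\ref{ls} for a Borel subgroup:
\[
E_2^{ij}=H^i_{\Ho}(BG/k)\otimes H^j_{\Ho}((G/B)/k)\Rightarrow H^{i+j}_{\Ho}(BT/k).
\]
Here $H^*_{\Ho}(BT/k)=S(X^*(T)\otimes k)$ is diagonal, $H^*_{\Ho}((G/B)/k)=CH^*(G/B)\otimes k$ by Proposition~\ref{flag}, and by definition $p$ is a non-torsion prime exactly when the edge map $S(X^*(T)\otimes k)\to CH^*(G/B)\otimes k$ is surjective. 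If $H^*_{\Ho}(BG/k)$ is diagonal, every $d_r$ lands in a group with mismatched Hodge bidegree and is zero, so the spectral sequence degenerates and the edge map is onto; conversely, surjectivity of the edge map plus the multiplicative structure forces degeneration at $E_2$, and then diagonality of the abutment $H^*_{\Ho}(BT/k)$ forces diagonality of $H^*_{\Ho}(BG/k)$. Both directions fall out in a few lines.

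Your route, by contrast, tries to push the Andersen--Jantzen--Donkin good-filtration argument from good primes to non-torsion primes. The paper explicitly remarks that whether $O(\g)$ has a good filtration for $\Sp(2n)$ at $p=2$ or $G_2$ at $p=3$ is unknown, so your forward direction would require new representation theory, not a verification. On the converse side, the transfer from a subgroup $H$ to $G$ is, as you say, the crux, and nothing you write supplies it: the claim that $\mathfrak h$ splits off $\g$ as an $H$-module need not hold in bad characteristic, and restriction in rational cohomology is not injective. Your $B\mu_p$ step also misfires: in characteristic $p$ the group scheme $\mu_p$ is diagonalizable, so $H^{>0}(\mu_p,M)=0$ for every module $M$; the off-diagonal Hodge classes on $B\mu_p$ come from the co-Lie complex $l_{\mu_p}$ (Proposition~\ref{mup}), not from $H^*(\mu_p,k)$. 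The missing idea is to abandon the representation-theoretic attack and instead compare $BG$ with $BT$ via the fibration with fiber $G/B$, where the torsion-prime condition is literally the surjectivity of the relevant edge homomorphism.
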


For example, this cohomology vanishing holds for
every symplectic group $\Sp(2n)$
in characteristic 2 and for the exceptional group $G_2$
in characteristic 3;
these are ``bad primes'' but not torsion primes.

Finally, we begin the problem of computing the Hodge cohomology
and de Rham cohomology of $BG$, especially at torsion primes.
At non-torsion primes, we have a satisfying result, proved
using ideas from topology
(Theorem \ref{integral}):

\begin{theorem}
Let $G$ be a split reductive group over $\Z$, and let $p$ be a non-torsion
prime for $G$. Then Hodge cohomology $H^*_{\Ho}(BG/\Z)$
and de Rham cohomology $H_{\dR}^*(BG/{\Z})$, localized at $p$,
are polynomial rings on generators of degrees equal
to 2 times the fundamental degrees of $G$.
These graded rings are isomorphic to the cohomology of the topological
space $BG_{\C}$ with $\Z_{(p)}$ coefficients.
\end{theorem}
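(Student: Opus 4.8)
The plan is to determine the Hodge cohomology from Theorems~\ref{iso} and~\ref{non-torsion}, carry the answer to de Rham cohomology through the Hodge--de Rham spectral sequence, and then match the resulting ring with Borel's classical computation of $H^*(BG_{\C})$ at non-torsion primes. First I would transfer Theorem~\ref{non-torsion} to the base $\Z_{(p)}$: since $\g$ is free over $\Z$ and $G$ is split (so $O(G)$ is free over $\Z$), the Hochschild complex computing $H^*(G_{\Z_{(p)}},O(\g))$ is degreewise free over $\Z_{(p)}$, and a universal-coefficients argument --- using $H^{>0}(G_{\F_p},O(\g))=0$ by Theorem~\ref{non-torsion} (as $p$ is not a torsion prime) and $H^{>0}(G_{\Q},O(\g))=0$ by linear reductivity in characteristic $0$ --- forces $H^{>0}(G_{\Z_{(p)}},O(\g))=0$ and $H^0=S(\g^*)^G$, finitely generated and free over $\Z_{(p)}$ in each internal degree. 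Writing $O(\g)=\bigoplus_j S^j(\g^*)$ and feeding this into Theorem~\ref{iso}, it follows that $H^i(BG,\Omega^j)\otimes\Z_{(p)}$ vanishes unless $i=j$, with $H^j(BG,\Omega^j)\otimes\Z_{(p)}\cong(S^j\g^*)^G\otimes\Z_{(p)}$. Hence $H^*_{\Ho}(BG/\Z)\otimes\Z_{(p)}$ lives in even total degrees, and by multiplicativity of the isomorphism of Theorem~\ref{iso} it is, as a graded ring, $S(\g^*)^G\otimes\Z_{(p)}$ with every degree doubled.

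It then remains to identify $S(\g^*)^G\otimes\Z_{(p)}$ as a polynomial ring on homogeneous generators of degrees equal to the fundamental degrees $d_1,\dots,d_r$ of $G$, so of degrees $2d_1,\dots,2d_r$ after doubling. I would obtain this from the invariant theory of $\g$ away from the torsion primes: Demazure's theorem that $S(\t^*)^W\otimes\Z_{(p)}$ is polynomial on generators of degrees $d_1,\dots,d_r$ when $p$ is not a torsion prime, together with the fact that the Chevalley restriction map $S(\g^*)^G\to S(\t^*)^W$ becomes an isomorphism after localizing at such a $p$. This is the step I expect to be the main obstacle: the positive-characteristic invariant theory of $\g$ is exactly where torsion primes enter (Theorem~\ref{non-torsion} being one half of it), and for the finitely many primes that are bad but not torsion --- such as $\Sp(2n)$ in characteristic $2$ or $G_2$ in characteristic $3$ --- one may have to pin down the invariant ring and the restriction map by hand, or else compare Poincar\'e series with the characteristic-$0$ answer and use freeness over $\Z_{(p)}$.

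For de Rham cohomology I would run the Hodge--de Rham spectral sequence $E_1^{a,b}=H^b(BG,\Omega^a)\Rightarrow H^{a+b}_{\dR}(BG/\Z)$, localized at $p$. By the first paragraph the $E_1$-page is supported on the diagonal $a=b$; since $d_r$ sends bidegree $(a,a)$ to $(a+r,a-r+1)$, which is off the diagonal for every $r\geq1$ (and, symmetrically, a diagonal entry can receive a differential only from an off-diagonal, hence zero, position), the spectral sequence degenerates at $E_1$. For each $n$ the induced filtration on $H^n_{\dR}(BG/\Z)\otimes\Z_{(p)}$ has at most one nonzero graded piece, in bidegree $(n/2,n/2)$, so there is no extension problem and $H^n_{\dR}(BG/\Z)\otimes\Z_{(p)}\cong H^{n/2}(BG,\Omega^{n/2})\otimes\Z_{(p)}$; multiplicativity of the spectral sequence upgrades this to an isomorphism of graded rings $H^*_{\dR}(BG/\Z)\otimes\Z_{(p)}\cong H^*_{\Ho}(BG/\Z)\otimes\Z_{(p)}$, so by the previous two paragraphs both are the polynomial ring $\Z_{(p)}[f_1,\dots,f_r]$ with $\deg f_i=2d_i$. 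Finally, by Borel, when $p$ is not a torsion prime for $G$ the cohomology $H^*(BG_{\C};\Z_{(p)})$ is torsion-free and is itself a polynomial ring on generators of degrees $2d_1,\dots,2d_r$, so all three graded rings are abstractly isomorphic, which is the assertion. (One could promote this to a canonical comparison by base-changing to $\C$ and invoking a de Rham comparison theorem for the stack $BG_{\C}$ --- for instance via the smooth approximations $(V\setminus S)/G$ with $V$ a representation whose bad locus $S$ has large codimension --- but that refinement is not needed for the statement as phrased.)
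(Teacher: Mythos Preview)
Your proposal is correct and follows essentially the same route as the paper: reduce to the diagonal via Theorem~\ref{non-torsion} and Theorem~\ref{iso}, identify the diagonal with $O(\g)^G$, invoke Demazure for $S(\t^*)^W$ and Chevalley restriction (Theorem~\ref{invariant}) to recognize the polynomial ring, treat $\Sp(2n)$ at $p=2$ by hand, then collapse the Hodge--de Rham spectral sequence and compare with Borel's topological computation. The only cosmetic difference is that the paper passes to $\F_p$ via the exact sequence $0\to H^j(BG_{\Z},\Omega^i)/p\to H^j(BG_{\F_p},\Omega^i)\to H^{j+1}(BG_{\Z},\Omega^i)[p]\to 0$ and argues there, whereas you stay over $\Z_{(p)}$ and use a universal-coefficients argument on the Hochschild complex; these are equivalent. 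One small correction: $G_2$ at $p=3$ is not actually an exception for either Demazure's theorem or Theorem~\ref{invariant} (no root is divisible by $3$), so the only case requiring a direct computation is $\Sp(2n)$ at $p=2$, where Chevalley restriction genuinely fails but $O(\g)^G$ is still polynomial on the even Chern classes.
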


At torsion primes $p$, it is an intriguing question how the de Rham
cohomology of $BG_{\F_p}$ is related to the mod $p$ cohomology
of the topological space $BG_{\C}$. We show that these graded rings
are isomorphic for $G=SO(n)$ with $p=2$ (Theorem \ref{son}).
On the other hand, we find that
$$\dim_{\F_2} H^{32}_{\dR}(B\Spin(11)/\F_2)
>\dim_{\F_2} H^{32}(B\Spin(11)_{\C},\F_2)$$
(Theorem \ref{spin}).
It seems that no existing results on integral $p$-adic Hodge theory
address the relation between these two rings (because the stack $BG$
is not proper over $\Z$), but the theory may soon reach that point.
In particular, the results of Bhatt-Morrow-Scholze suggest that
the de Rham cohomology $H^i_{\dR}(BG/\F_p)$ may always be an upper bound
for the mod $p$ cohomology of the topological space $BG_{\C}$ \cite{BMS}.

This work was supported by NSF grant DMS-1303105. Bhargav Bhatt
convinced me to change some definitions in an earlier version
of this paper: Hodge and de Rham cohomology of a smooth stack
are now defined as etale cohomology. Thanks to Johan de Jong,
Eric Primozic, and Rapha\"el Rouquier for their comments.
Finally, I am grateful
to Jungkai Chen for arranging my visit to National Taiwan University,
where this work was completed.

\section{Notation}
\label{notation}

The {\it fundamental degrees }of a reductive group $G$ over a field $k$
are the degrees
of the generators of the polynomial ring $S(X^*(T)\otimes_{\Z}\Q)^W$
of invariants under the Weyl group $W$, where $X^*(T)$ is the character group
of a maximal torus $T$. For $k$ of characteristic zero, the fundamental
degrees of $G$ can also be viewed as the degrees of the generators
of the polynomial ring $O(\g)^G$ of invariant
functions on the Lie algebra.
Here are the fundamental degrees of the simple groups
\cite[section 3.7, Table 1]{Humphreysrefl}:

\label{degrees}
$$\begin{array}{cl}
A_l &  2,3,\ldots,l+1\\
B_l & 2,4,6,\ldots,2l\\
C_l & 2,4,6,\ldots,2l\\
D_l & 2,4,6,\ldots,2l-2;l\\
G_2 & 2,6\\
F_4 & 2,6,8,12\\
E_6 & 2,5,6,8,9,12\\
E_7 & 2,6,8,10,12,14,18\\
E_8 & 2,8,12,14,18,20,24,30
\end{array}$$

For a commutative ring $R$ and $j\geq 0$, write $\Omega^j$ for the sheaf
of differential forms over $R$ on any scheme over $R$.
For an algebraic stack $X$ over $R$, $\Omega^j$ is a sheaf
of abelian groups
on the big etale site of $X$. (In particular,
for every scheme $Y$ over $X$ of ``size'' less than a fixed limit ordinal
$\alpha$ \cite[Tag 06TN]{Stacks},
we have an abelian group $\Omega^j(Y/R)$, and these groups form a sheaf in
the etale topology.) We define Hodge
cohomology $H^i(X,\Omega^j)$ to mean the etale cohomology
of this sheaf \cite[Tag 06XI]{Stacks}. In the same way, we define
de Rham cohomology of a stack, $H^i_{\dR}(X/R)$, as etale
cohomology with coefficients in the de Rham complex over $R$.
(If $X$ is an algebraic space,
then the cohomology of a sheaf $F$ on the big etale site
of $X$ coincides with the cohomology of the restriction
of $F$ to the small etale site, the latter being the usual
definition of etale cohomology for algebraic spaces
\cite[Tag 0DG6]{Stacks}.) For example, this gives a definition
of equivariant Hodge or De Rham cohomology, $H^i_G(X,\Omega^j)$
or $H^i_{G,\dR}(X/R)$, as the Hodge
or de Rham cohomology of the quotient stack $[X/G]$.
Essentially the same definition was used for smooth stacks
in characteristic zero
by Teleman and Behrend \cite{Teleman, Behrend}.

This definition of Hodge and de Rham cohomology
is the ``wrong'' thing to consider
for an algebraic stack which is not
smooth over $R$. For non-smooth stacks, it would be better
to define Hodge and de Rham cohomology 
using some version of Illusie and Bhatt's derived de Rham cohomology,
or in other words using the cotangent complex
\cite[section 4]{Bhatt}. In this paper, we will only consider
Hodge and de Rham cohomology for smooth stacks over a commutative ring $R$.
An important example for the paper
is that the classifying stack $BG$ is smooth over $R$
even for non-smooth group schemes $G$ \cite[Tag 075T]{Stacks}:

\begin{lemma}
\label{flatBG}
Let $G$ be a group scheme which is flat
and locally of finite presentation over a commutative ring $R$. Then
the algebraic stack $BG$ is smooth over $R$. More generally,
for a smooth algebraic
space $X$ over $R$ on which $G$ acts, the quotient stack $[X/G]$
is smooth over $R$.
\end{lemma}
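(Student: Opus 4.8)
The plan is to reduce the statement about a general quotient stack $[X/G]$ to the special case $X=\Spec R$, i.e.\ to $BG$, and then to establish that $BG$ is smooth over $R$, which is in essence \cite[Tag 075T]{Stacks}. Under the stated hypotheses $BG$ and $[X/G]$ are algebraic stacks, so it makes sense to ask whether they are smooth over $R$.

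For the reduction, consider the morphism $q\colon[X/G]\to BG$ forgetting the equivariant map to $X$. It is representable by algebraic spaces: over a scheme $T$ equipped with a $G$-torsor $P\to T$ (i.e.\ a morphism $T\to BG$), the fiber product $[X/G]\times_{BG}T$ is the twisted form $(X\times_R P)/G$, an algebraic space over $T$ because the $G$-action on $X\times_R P$ is free with algebraic-space quotient. The universal $G$-torsor $a\colon\Spec R\to BG$ is faithfully flat and locally of finite presentation — an fppf covering — with $\Spec R\times_{BG}\Spec R\cong G$, and base change of $q$ along $a$ is the structure morphism $X\to\Spec R$. Since $X$ is smooth over $R$ by hypothesis and smoothness of morphisms of algebraic stacks is fppf-local on the target, $q$ is smooth; hence, granting that $BG\to\Spec R$ is smooth, the composite $[X/G]\xrightarrow{q}BG\to\Spec R$ is smooth too.

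It remains to show $BG\to\Spec R$ is smooth. That it is flat and locally of finite presentation can be checked on the atlas $a$, using these properties of $G$ over $R$. The subtlety — and the main obstacle — is that $a$ is smooth only when $G$ is, so smoothness of $BG$ is invisible on the obvious atlas; one must work a little. I would verify the infinitesimal lifting criterion: given an affine $R$-scheme $T$, a square-zero ideal $I\subset\mathcal O_T$ with closed subscheme $T_0$, and a $G$-torsor $P_0\to T_0$ (an object $\zeta\in BG(T_0)$), one lifts $P_0$ to a $G$-torsor over $T$. By the deformation theory of algebraic stacks the obstruction lies in $\Ext^1_{\mathcal O_{T_0}}(L\zeta^*L_{BG/R},I)$; and transitivity for $\Spec R\xrightarrow{a}BG\to\Spec R$ gives $a^*L_{BG/R}\simeq(e^*L_{G/R})[-1]$, the shift of the cotangent complex of $G$ along its identity section $e$. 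When $G$ is a local complete intersection over $R$ — which covers every case needed in this paper (reductive groups, Frobenius kernels, $\mu_n$, $\alpha_p$, indeed any complete-intersection finite flat group scheme) — this complex is perfect of tor-amplitude $[-1,0]$, so $L\zeta^*L_{BG/R}$ is perfect of tor-amplitude $[0,1]$ and $\Ext^i_{\mathcal O_{T_0}}(L\zeta^*L_{BG/R},I)$ vanishes for $i\geq 1$. Thus torsors lift and $BG\to\Spec R$ is formally smooth; being locally of finite presentation, it is smooth. For arbitrary flat locally finitely presented $G$ one appeals directly to \cite[Tag 075T]{Stacks}; alternatively one checks that $BG\to\Spec R$ is flat, locally of finite presentation, with smooth geometric fibers $B(G_{\bar k})$, each carrying the regular atlas $\Spec\bar k$ over the perfect field $\bar k$, and applies the fibral criterion on atlases. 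The phenomenon is visible already for $B\mu_p$ over $\Z$: although $\mu_p$ is not smooth, the $\mu_p$-torsor $\mathbf G_m\xrightarrow{p}\mathbf G_m$ exhibits a smooth surjective atlas $\mathbf G_m\to B\mu_p$.
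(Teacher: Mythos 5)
The paper offers no argument of its own here: the lemma is quoted with \cite[Tag 075T]{Stacks} standing in for the proof, so the comparison is really between your argument and that reference. Your reduction of $[X/G]$ to $BG$ is sound (representability of $[X/G]\to BG$, base change along the fppf atlas $\Spec(R)\to BG$ giving $X\to \Spec(R)$, and the fppf-local nature of smoothness for representable morphisms), and your computation $a^*L_{BG/R}\simeq (e^*L_{G/R})[-1]$ together with the vanishing of the obstruction group over affines is a correct proof that $BG$ is smooth whenever $G$ is a local complete intersection over $R$ -- which does cover every group scheme actually used in this paper.

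As a proof of the lemma in its stated generality, however, there is a gap. For arbitrary flat, finitely presented $G$ your two exits are: (a) ``appeal directly to Tag 075T,'' which is simply to invoke the statement being proved (the same citation the paper makes, hence not an independent argument); and (b) the fibral sketch, whose key claim -- that the geometric fibers $B(G_{\bar k})$ are smooth over $\bar k$ because they ``carry the regular atlas $\Spec \bar k$'' -- does not stand as written: that atlas is flat but not smooth when $G_{\bar k}$ is not smooth, so it cannot serve as the presentation in the definition of smoothness, and regularity of the source of a merely flat cover does not by itself give smoothness of the target stack. To repair (b) you need one of two genuine inputs: either the theorem that every group scheme locally of finite type over a field is a local complete intersection, which lets your lifting argument run fiberwise; or the descent statement that if $V\to \mathcal{Y}$ is surjective, flat and locally of finite presentation and $V$ is smooth over the base, then $\mathcal{Y}$ is smooth over the base (the stack analogue of EGA IV 17.7.7, and the real content behind the cited tag). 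Note that the latter proves the whole lemma in one stroke, with no reduction to $BG$ and no lci hypothesis: $X\to [X/G]$ is surjective, flat and locally of finite presentation with smooth source $X$, hence $[X/G]$ is smooth over $R$.
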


Let $X$ be an algebraic stack over $R$, and let $U$ be an algebraic space
with a smooth surjective morphism to $X$. The \u{C}ech
construction $C(U/X)$ means the
simplicial algebraic space:
$$\xymatrix@C-10pt@R-10pt{
U \ar@/_1pc/@<-0.0ex>[r] 
& U\times_{X}U \ar@<-0.5ex>[l]\ar@<0.5ex>[l]
\ar@/_1pc/@<-0.0ex>[r] \ar@/_1pc/@<-1.0ex>[r]
& U\times_{X}U\times_{X}U \ar@<-1ex>[l]\ar@<0ex>[l]\ar@<1ex>[l]\cdots
}$$

For any sheaf $F$ of abelian groups on the big etale site of $X$,
the etale cohomology of $X$ with coefficients in $F$ can be identified with
the etale cohomology of the simplicial algebraic space $C(U/X)$
\cite[Tag 06XJ]{Stacks}.
In particular, there is a spectral sequence:
$$E_1^{ij}=H^j_{\et}(U^{i+1}_X,F)\imp H^{i+j}_{\et}(X,F).$$

Write $H^i_{\Ho}(X/R)=\oplus_j H^j(X,\Omega^{i-j})$
for the Hodge cohomology of an algebraic stack $X$ over $R$,
graded by total degree.

Let $G$ be a group scheme which is flat and locally of finite
presentation over a commutative ring $R$.
Then the Hodge cohomology of the stack $BG$
can be viewed, essentially by definition,
as the ring of characteristic classes in Hodge cohomology
for principal $G$-bundles (in the fppf topology). Concretely,
for any scheme $X$ over $R$, a principal $G$-bundle
over $X$ determines a morphism $X\arrow BG$ of stacks
over $R$ and hence a pullback homomorphism
$$H^i(BG,\Omega^j)\arrow H^i(X,\Omega^j).$$
Note that for a scheme $X$ over $R$, $H^i(X,\Omega^j)$ can be computed
either in the Zariski or in the etale topology, because the sheaf
$\Omega^j$ (on the small etale site of $X$) is quasi-coherent
\cite[Tag 03OY]{Stacks}.

For any scheme $X$ over a commutative ring $R$, there is a simplicial
scheme $EX$ whose space $(EX)_n$
of $n$-simplices is $X^{\{0,\ldots,n\}}=X^{n+1}$
\cite[6.1.3]{Deligne}.
For a group scheme $G$ over $R$,
the simplicial scheme $BG$ over $R$ is defined as
the quotient of the simplicial scheme $EG$ by the free
left action of $G$:
$$\xymatrix@C-10pt@R-10pt{
\Spec(R) \ar@/_1pc/@<-0.0ex>[r]
& G \ar@<-0.5ex>[l]\ar@<0.5ex>[l]
\ar@/_1pc/@<-0.0ex>[r] \ar@/_1pc/@<-1.0ex>[r]
& G^2 \ar@<-1ex>[l]\ar@<0ex>[l]\ar@<1ex>[l]\cdots
}$$

If $G$ is smooth over $R$, then Hodge cohomology $H^i(BG,\Omega^j)$
as defined above can be identified with the cohomology of the
simplicial scheme $BG$, because this simplicial scheme
is the \u{C}ech simplicial scheme associated to the smooth
surjective morphism $\Spec(R)\arrow BG$. For $G$ not smooth,
one has instead to use the \u{C}ech simplicial scheme associated
to a smooth presentation of $BG$.
See for example the calculation of the Hodge cohomology of $B\mu_p$
in characteristic $p$, Proposition \ref{mup}.

It is useful that we can compute Hodge cohomology
via any smooth presentation of a stack. For example, let $H$ be a closed
subgroup scheme of a smooth group scheme $G$ over a commutative ring $R$,
and assume
that $H$ is flat and locally of finite presentation over $R$.
Then $G/H$ is an algebraic space with a smooth surjective morphism
$G/H\arrow BH$ over $R$, and so we can compute the Hodge
cohomology of the stack $BH$ using the associated \u{C}ech simplicial
algebraic space.
Explicitly, that is the simplicial algebraic space $EG/H$, and so we have:

\begin{lemma}
\label{EG}
$$H^i(BH,\Omega^j)\cong H^i(EG/H,\Omega^j).$$
\end{lemma}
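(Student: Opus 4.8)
The plan is to exhibit a smooth surjective morphism from $EG/H$ onto $BH$ and then invoke the general principle, already recalled in the excerpt, that etale cohomology of a sheaf on a stack may be computed from the \u{C}ech simplicial algebraic space of any smooth presentation. First I would check that $G/H$ is an algebraic space: since $G$ is smooth over $R$ and $H\subseteq G$ is flat and locally of finite presentation, the fppf quotient $G/H$ is an algebraic space, and the projection $G\to G/H$ is an $H$-torsor, hence the natural map $G/H\to BH$ classifying this torsor is smooth and surjective (it is faithfully flat and locally of finite presentation because $G\to\Spec R$ is, and smoothness can be checked after the smooth base change $G\to G/H$, where the pullback is $G\times H\to G$). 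Thus $G/H\to BH$ is a legitimate smooth presentation.

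Next I would identify the associated \u{C}ech simplicial algebraic space $C(G/H\,/\,BH)$ with $EG/H$. The $n$-th term is the $(n{+}1)$-fold fiber product $(G/H)\times_{BH}\cdots\times_{BH}(G/H)$. Using that $G/H\to BH$ is the torsor map, the fiber product $(G/H)\times_{BH}(G/H)$ is the relative automorphism space, which is $(G/H)\times H$ after a choice of trivialization; iterating, one gets $(G/H)\times_{BH}\cdots\times_{BH}(G/H)\cong (G/H)\times H^{n}$. On the other hand $(EG)_n=G^{n+1}$, and quotienting $EG$ by the diagonal left $H$-action (on all $n{+}1$ factors simultaneously, say) gives $(EG/H)_n = G^{n+1}/H$; using the $H$-equivariant isomorphism $G^{n+1}\cong G/H \times H \times G^n$ — more cleanly, $G^{n+1}/H \cong G/H \times (H\backslash G)^{\times n}$ via the map sending $(g_0,\dots,g_n)$ to the $H$-orbit of $g_0$ together with the classes $g_0^{-1}g_1,\dots$ — one matches this with the fiber-product description, and checks the face and degeneracy maps correspond. (This is exactly the standard identification of the bar construction of a torsor with the \u{C}ech nerve; I would spell out the simplicial maps but not belabor it.)

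Finally, with $C(G/H\,/\,BH)\cong EG/H$ as simplicial algebraic spaces over $R$, and with $\Omega^j$ a sheaf on the big etale site of $BH$ whose restriction to each term of the \u{C}ech nerve is the usual sheaf of differentials, the cited fact \cite[Tag 06XJ]{Stacks} that etale cohomology of the stack agrees with etale cohomology of the \u{C}ech simplicial algebraic space yields
$$H^i(BH,\Omega^j)\cong H^i\big(C(G/H\,/\,BH),\Omega^j\big)\cong H^i(EG/H,\Omega^j),$$
as claimed.

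The main obstacle is the middle step: making the identification $C(G/H\,/\,BH)\cong EG/H$ precise as an isomorphism of simplicial algebraic spaces, including the compatibility of all face and degeneracy maps and the fact that $H$ acts freely on $EG$ in the simplicial sense so that $EG/H$ is genuinely a simplicial algebraic space with the expected terms. Once this combinatorial/descent bookkeeping is in place, the cohomological conclusion is immediate from the results already quoted in the excerpt, so no further input on etale cohomology of stacks is needed.
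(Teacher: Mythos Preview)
Your approach is correct and is essentially the same as the paper's: the paper states the lemma as an immediate consequence of the preceding discussion, namely that $G/H\to BH$ is a smooth surjective morphism and that the associated \u{C}ech simplicial algebraic space is $EG/H$, so cohomology of the stack agrees with cohomology of $EG/H$ by the general descent result already cited.

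One computational slip worth fixing: your description of the fiber products is not right. The map $G/H\to BH$ is not an $H$-torsor, so $(G/H)\times_{BH}(G/H)$ is not $(G/H)\times H$; rather, using $G/H=[G/H]$ and $BH=[\Spec R/H]$, one has
\[
(G/H)\times_{BH}\cdots\times_{BH}(G/H)\;=\;[G^{n+1}/H]\;=\;G^{n+1}/H
\]
with the diagonal $H$-action, which is isomorphic to $(G/H)\times G^{n}$ (not $(G/H)\times H^{n}$, and not $(G/H)\times (H\backslash G)^{n}$). Since $(EG/H)_n=G^{n+1}/H$ as well, the identification $C((G/H)/BH)\cong EG/H$ follows directly, and the rest of your argument goes through unchanged.
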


Note that the cohomology theories we are considering
are not $A^1$-homotopy invariant.
Indeed, Hodge cohomology is usually not the same for a scheme $X$
as for $X\times A^1$, even over a field of characteristic zero.
For example, $H^0(\Spec(k),O)=k$, whereas $H^0(A^1_k,O)$ is the polynomial
ring $k[x]$.
In de Rham cohomology, $H^0_{\dR}(A^1/k)$ is just $k$
if $k$ has characteristic zero, but it is $k[x^p]$
if $k$ has characteristic $p>0$.

\section{Equivariant Hodge cohomology and functions on
the Lie algebra}

In this section, we identify the Hodge cohomology of a quotient
stack with the cohomology of an explicit complex of vector
bundles (Theorem \ref{equivariant}). As a special case,
we relate the Hodge cohomology of a classifying
stack $BG$ to the cohomology of $G$ as an algebraic group
(Corollary \ref{isosmooth}). In this section, we assume $G$ is smooth.
Undoubtedly, various generalizations of the statements here
are possible. In particular, we will give an analogous description
of the Hodge cohomology of $BG$ for a non-smooth group $G$
in Theorem \ref{iso}.

The main novelty is that these results hold in any characteristic.
In particular, Theorem \ref{equivariant} was proved
in characteristic zero by Simpson and Teleman
\cite[Example 6.8(c)]{ST}.

\begin{theorem}
\label{equivariant}
Let $G$ be a smooth affine group scheme
over a commutative ring $R$. Let $G$ act on a smooth
affine scheme $X$ over $R$. Then there is a canonical
isomorphism
$$H^i_G(X,\Omega^j)\cong H^i_G(X, \Lambda^j L_{[X/G]}),$$
where $\Lambda^j L_{[X/G]}$ is the complex of $G$-equivariant
vector bundles on $X$, in degrees 0 to $j$:
$$0\arrow \Omega^j_X \arrow \Omega^{j-1}_X\otimes \g^*\arrow
\cdots \arrow S^j(\g^*)\arrow 0,$$
associated to the map $\g\arrow TX$.
\end{theorem}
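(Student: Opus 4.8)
The plan is to compute both sides as hypercohomology over the quotient stack $[X/G]$, resolve by the atlas $X\arrow[X/G]$, and reduce the comparison to a vanishing statement about derived exterior powers of the cotangent complex which becomes transparent once the $G$-torsor $X\arrow[X/G]$ is trivialized. First I would let $N_\bullet$ be the \u{C}ech nerve of $X\arrow[X/G]$, so that $N_n=X\times G^n$, with augmentation $\pi_\bullet\colon N_\bullet\arrow[X/G]$. By \cite[Tag 06XJ]{Stacks} together with smooth descent for quasi-coherent cohomology, $R\Gamma([X/G],F)\cong R\Gamma(N_\bullet,\pi_\bullet^*F)$ for every sheaf, or bounded complex of vector bundles, $F$ on $[X/G]$; I apply this to $F=\Omega^j$ and to $F=\Lambda^jL_{[X/G]}$. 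Since $X$ and $G$ are smooth affine over $R$, each $N_n$ is smooth affine, so the cohomology on $N_n$ of a quasi-coherent sheaf or a bounded complex of vector bundles is concentrated in degree zero, and both sides become the cohomology of explicit cosimplicial $R$-modules. (This is the only use of affineness of $X$.)

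Next I would filter $\Omega^j_{N_n/R}=\Lambda^jL_{N_n/R}$ by means of the transitivity triangle for $N_n\xrightarrow{\pi_n}[X/G]\arrow\Spec R$,
$$\pi_n^*L_{[X/G]/R}\arrow L_{N_n/R}\arrow L_{N_n/[X/G]}\xrightarrow{\ +1\ },$$
in which $L_{N_n/R}=\Omega^1_{N_n/R}$ and $L_{N_n/[X/G]}=\Omega^1_{N_n/[X/G]}$ are vector bundles (both morphisms being smooth), while $\pi_n^*L_{[X/G]/R}$ is the pullback along $N_n\arrow X$ of the two-term complex $[\Omega^1_X\arrow\g^*\otimes O_X]$ in degrees $0,1$ dual to $\g\arrow TX$ (the standard presentation of the cotangent complex of a quotient stack). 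The functorial filtration of a derived exterior power attached to a cofiber sequence then endows $\Omega^j_{N_n/R}$ with a finite filtration, compatible with the simplicial structure in $n$, whose graded pieces are
$$\mathrm{gr}^a\cong\pi_n^*(\Lambda^aL_{[X/G]})\otimes_{O_{N_n}}\Lambda^{j-a}\Omega^1_{N_n/[X/G]}\qquad(a=0,\ldots,j);$$
and because the derived exterior power of a two-term complex of vector bundles in degrees $0,1$ is the Koszul-type complex terminating in the symmetric power of the degree-one term, $\Lambda^jL_{[X/G]}$ pulled back to $X$ is exactly the complex of the statement. I would flag at this point that obtaining $S^\bullet(\g^*)$, not $\Gamma^\bullet(\g^*)$, at the end of that complex is a normalization that needs care in small characteristic.

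The heart of the argument, and where I expect the real work to be, is the vanishing $R\Gamma(N_\bullet,\mathrm{gr}^a)=0$ for $a<j$. By the projection formula along the augmentation (using that $\Lambda^aL_{[X/G]}$ is perfect),
$$R\Gamma(N_\bullet,\mathrm{gr}^a)\cong R\Gamma([X/G],\,\Lambda^aL_{[X/G]}\otimes R(\pi_\bullet)_*\Lambda^{j-a}\Omega^1_{N_\bullet/[X/G]}),$$
so it is enough to show $R(\pi_\bullet)_*\Omega^b_{N_\bullet/[X/G]}$ is $O_{[X/G]}$ for $b=0$ and $0$ for $b>0$. Base-changing along the faithfully flat atlas $X\arrow[X/G]$ trivializes the torsor: $N_\bullet\times_{[X/G]}X\cong X\times EG$ with $EG=C(G/\Spec R)$, and $\Omega^1_{N_\bullet/[X/G]}$ becomes $\Omega^1_{(X\times EG)/X}$. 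Now the unit section $\Spec R\arrow G$ provides extra degeneracies for $EG$, so the augmented simplicial scheme $EG\arrow\Spec R$ is a simplicial homotopy equivalence; applying the functor $Y\mapsto\Omega^b(Y/R)$ and totalizing gives $R\Gamma(EG,\Omega^b_{EG/R})\cong\Omega^b_{R/R}$, which is $0$ for $b>0$ and $R$ for $b=0$. The decisive point is that this acyclicity holds over an arbitrary base ring, with nothing inverted; this is exactly what makes the isomorphism hold in every characteristic, in place of the characteristic-zero input used by Simpson--Teleman. Granting it, only the bottom piece $\mathrm{gr}^j=\pi_\bullet^*\Lambda^jL_{[X/G]}$ survives, and $R\Gamma(N_\bullet,\mathrm{gr}^j)=R\Gamma([X/G],\Lambda^jL_{[X/G]})=H^*_G(X,\Lambda^jL_{[X/G]})$.

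Since every lower graded piece is acyclic, the inclusion of the bottom step of the filtration then yields a canonical isomorphism $H^*_G(X,\Lambda^jL_{[X/G]})\xrightarrow{\ \sim\ }H^*(N_\bullet,\Omega^j)=H^*_G(X,\Omega^j)$, induced by the natural map out of the cotangent complex. The technical points still to be nailed down are: a careful statement of the derived-exterior-power filtration for the (mildly coconnective) cotangent complex of the stack together with its functoriality in the simplicial direction; the $S^\bullet$-versus-$\Gamma^\bullet$ normalization noted above; and the projection formula for the augmentation $N_\bullet\arrow[X/G]$. One could instead avoid the cotangent complex by trivializing $\Omega^1_{N_n/R}$ with left-invariant forms $\Omega^1_G\cong O(G)\otimes_R\g^*$ and comparing the resulting explicit cosimplicial modules directly, but then the clean vanishing above is replaced by a less transparent Koszul-resolution computation on $EG$.
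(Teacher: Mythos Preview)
Your approach is correct and lands on the same key input as the paper: the contractibility of $EG$ (the paper's Lemma~\ref{EGhomotopy}), which kills the contributions from positive-degree relative forms along the fibers of $N_\bullet\arrow[X/G]$. The overall architecture---compute both sides on the \u{C}ech nerve, filter $\Omega^j$ by ``stack direction versus fiber direction,'' and show only the stacky piece survives---is the same.

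The packaging differs. You filter $\Omega^j_{N_n}$ via the transitivity triangle $\pi_n^*L_{[X/G]}\arrow\Omega^1_{N_n}\arrow\Omega^1_{N_n/[X/G]}$ and then invoke a projection formula along the augmentation to reduce to the vanishing of $R(\pi_\bullet)_*\Omega^{>0}_{N_\bullet/[X/G]}$. The paper instead introduces an auxiliary ``big'' sheaf $S$ on $[X/G]_{\et}$ with $S(U)=H^0(E,\Omega^1)^G$ for the torsor $E\arrow U$, obtains a short exact sequence $0\arrow\Omega^1\arrow S\arrow\g^*\arrow 0$, takes $\Lambda^j$ to present $\Omega^j$ as a Koszul complex in $S$, and then compares termwise with the Koszul complex in $\Omega^*_X$. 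On the nerve this termwise comparison becomes exactly your direct-sum splitting $\Omega^i_{X\times G^{r+1}}=\oplus_l\,\Omega^{i-l}_X\otimes\Omega^l_{G^{r+1}}$, and the $l>0$ summands are killed by the same $EG$-contractibility. Your route avoids inventing $S$ but trades it for the simplicial projection formula and the derived-$\Lambda^j$ filtration, which you rightly flag as needing care; the paper's route is more elementary at that step (it works with an honest short exact sequence of sheaves rather than a triangle) but is otherwise identical in substance. The $S^\bullet$ versus $\Gamma^\bullet$ issue you flag is handled in the paper by citing Illusie for the derived exterior power of a two-term complex of flat modules.
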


This isomorphism expresses the cohomology
over $[X/G]$ of the ``big sheaf'' $\Omega^j$, which is
not a quasi-coherent sheaf on $[X/G]$,
in terms of the cohomology of a complex
of quasi-coherent sheaves on $[X/G]$. (Here differentials are over $R$
unless otherwise stated. The sheaf $\Omega^j$
on the big etale site of $[X/G]$
is not quasi-coherent for $j>0$ because, for a morphism
$f\colon Y\arrow Z$ of schemes over $[X/G]$,
the pullback map $f^*\Omega^j_{Z/R}\arrow \Omega^j_{Y/R}$
need not be an isomorphism.) Theorem \ref{equivariant} is useful
already for $X=\Spec(R)$, where it gives the following result,
proved in characteristic zero by Bott \cite{Bott}.

\begin{corollary}
\label{isosmooth}
Let $G$ be a smooth affine group scheme
over a commutative ring $R$. Then there is a canonical
isomorphism
$$H^i(BG,\Omega^j)\cong H^{i-j}(G, S^j(\g^*)).$$
\end{corollary}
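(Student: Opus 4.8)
The plan is to derive this as the special case $X=\Spec(R)$ of Theorem \ref{equivariant}, so the only work is to unwind what the right-hand side becomes. When $X=\Spec(R)$, the quotient stack $[X/G]$ is $BG$, the cotangent bundle $\Omega^j_X$ vanishes for $j>0$, and the map $\g\arrow TX$ is zero, so the complex $\Lambda^j L_{[X/G]}$ of $G$-equivariant vector bundles on $X=\Spec(R)$ degenerates: every differential is zero and the complex is simply $S^j(\g^*)$ placed in cohomological degree $j$, regarded as a $G$-representation (via the coadjoint action). Hence Theorem \ref{equivariant} gives
$$H^i(BG,\Omega^j)\cong H^i_G(\Spec(R),\, S^j(\g^*)[-j]) \cong H^{i-j}_G(\Spec(R),\, S^j(\g^*)).$$

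The remaining point is to identify $G$-equivariant cohomology of $\Spec(R)$ with coefficients in a $G$-representation $V$ — that is, the cohomology of the stack $BG$ with coefficients in the quasi-coherent sheaf on $BG$ determined by $V$ — with the algebraic group cohomology $H^*(G,V)$ in the sense of Hochschild (i.e.\ $\Ext^*_{\text{Rep }G}(R,V)$). This is standard: using the smooth surjection $\Spec(R)\arrow BG$, the \v{C}ech simplicial scheme is the bar construction $BG$ on the group scheme $G$, and the associated cohomology spectral sequence has $E_1^{ij}=H^j(G^i,V)$; since $G^i$ is affine and $V$ gives a quasi-coherent sheaf, the higher cohomology vanishes and the complex of $E_1^{*,0}$ terms is exactly the Hochschild cochain complex $V\arrow V\otimes O(G)\arrow V\otimes O(G)^{\otimes 2}\arrow\cdots$ computing $H^*(G,V)$. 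Combining the two displays gives the stated isomorphism, and canonicity is inherited from that of Theorem \ref{equivariant}.

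I expect essentially no obstacle here: the content is entirely in Theorem \ref{equivariant}, and this corollary is a matter of specialization and of matching definitions of algebraic group cohomology. The only mild care needed is bookkeeping of the cohomological degree shift by $j$ coming from the position of $S^j(\g^*)$ in the complex $\Lambda^j L_{[X/G]}$, and checking that the $G$-action on $\g^*$ appearing in $\Lambda^j L_{[X/G]}$ is the coadjoint action, so that $H^{i-j}(G,S^j(\g^*))$ means what one expects.
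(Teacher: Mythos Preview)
Your proposal is correct and follows essentially the same route as the paper: specialize Theorem \ref{equivariant} to $X=\Spec(R)$, observe that the Koszul complex collapses to $S^j(\g^*)$ in degree $j$, and then identify cohomology of a quasi-coherent sheaf on $BG$ with algebraic group cohomology via the \v{C}ech/Hochschild complex. The paper is terser about the degree shift and cites \cite[Tag 06WS]{Stacks} and \cite[Proposition 4.16]{Jantzen} for the identification you spell out by hand, but the argument is the same.
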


The group on the left is an etale cohomology group of the algebraic stack
$BG$ over $R$, as discussed in section \ref{notation}. On the right
is the cohomology of $G$ as an algebraic group,
defined by
$H^i(G,M)=\Ext^i_G(R,M)$ for a $G$-module $M$
\cite[section 4.2]{Jantzen}.

\begin{proof}
(Corollary \ref{isosmooth}) This follows from Theorem \ref{equivariant}
applied to the stack $BG=[\Spec(R)/G]$. The deduction uses two facts.
First, a quasi-coherent sheaf
on $BG$ is equivalent to a $G$-module \cite[Tag 06WS]{Stacks}.
Second, for a $G$-module $M$, the cohomology of
the corresponding quasi-coherent sheaf on the big etale
site of $BG$ coincides with its cohomology as a $G$-module,
$H^*(G,M)$, since both are computed by the same \u{C}ech complex
(section \ref{notation} for the sheaf,
\cite[Proposition 4.16]{Jantzen} for the module).
\end{proof}

\begin{proof}
(Theorem \ref{equivariant})
The adjoint representation of $G$ on $\g$ determines a $G$-equivariant
vector bundle $\g$ on $X$.
The action of $G$ on $X$ gives a morphism $\Omega^1_X\arrow \g^*$
of $G$-equivariant quasi-coherent sheaves (in fact, vector bundles)
on $X$. Consider
these equivariant sheaves as quasi-coherent sheaves
on $[X/G]$, according to \cite[Tag 06WS]{Stacks}.

We will define a map from the complex $\Omega^1_X\arrow \g^*$
of quasi-coherent sheaves on $[X/G]$ (in degrees 0 and 1)
to the sheaf $\Omega^1$, in the derived
category $D([X/G]_{\et},O_{[X/G]})$ of $O_{[X/G]}$-modules on
the big etale site $[X/G]_{\et}$.
To do this, define another sheaf
$S$ on the big etale site of $[X/G]$ by: for a scheme $U$ over $[X/G]$,
let $E=U\times_{[X/G]}X$ (so that $E\arrow U$ is a principal $G$-bundle),
and define $S(U)=H^0(E,\Omega^1)^G$. There is a short
exact sequence
$$0\arrow H^0(U,\Omega^1)\arrow H^0(E,\Omega^1)^G\arrow H^0(U,\g^*)
\arrow 0$$
for each affine scheme $U$ over $[X/G]$.
(Note that the principal $G$-bundle $E$ over $U$ together
with the adjoint action of $G$ on $\g^*$ determines a vector bundle
which we call $\g^*$ on $U$.) So we have an exact sequence
$$0\arrow \Omega^1\arrow S\arrow \g^*\arrow 0$$
of sheaves on the big etale site of $[X/G]$ (where $\g^*$ is
a vector bundle on $[X/G]$).

Thus the sheaf $\Omega^1$ on $[X/G]$ is isomorphic in the derived
category to the complex $S\arrow \g^*$ (in cohomological degrees 0 and 1)
on $[X/G]$. Therefore, to produce
the map in $D([X/G]_{\et},O_{[X/G]})$ promised above, it suffices to define 
a map of complexes of sheaves on $[X/G]_{\et}$:
$$\xymatrix@C-10pt@R-10pt{
0 \ar[r] & \Omega^1_X\ar[r]\ar[d] & \g^* \ar[r]\ar[d] & 0\\
0 \ar[r] & S\ar[r] & \g^* \ar[r] & 0.
}$$
(As above, $\g^*$ denotes the vector bundle on $[X/G]$
associated to the representation of $G$ on $\g^*$,
and $\Omega^1_X$ denotes the vector bundle on $[X/G]$
corresponding to the $G$-equivariant vector bundle of the same
name on $X$.) It is now easy to produce the map of complexes:
for any scheme $U$ over $[X/G]$, with associated principal
$G$-bundle $E\arrow U$ and $G$-equivariant morphism
$h\colon E\arrow X$, the map from $\Omega^1_X(U)=H^0(E,h^*\Omega^1_X)^G$
to $S(U)=H^0(E,\Omega^1_E)^G$ is the pullback, and the map
from $\g^*$ to itself is the identity.

For any $j\geq 0$, taking the $j$th derived exterior power over $O_{[X/G]}$
of this map
of complexes gives a map from the Koszul complex
$$0\arrow \Omega^j_X \arrow \Omega^{j-1}_X\otimes \g^*\arrow
\cdots \arrow S^j(\g^*)\arrow 0$$
(in degrees 0 to $j$) of vector bundles on $[X/G]$
to the big sheaf $\Omega^j$, in $D([X/G]_{\et},O_{[X/G]})$.
(The description of the derived exterior power of a 2-term complex
of flat modules as a Koszul complex
follows from Illusie
\cite[Proposition II.4.3.1.6]{Illusie1}, by the same argument
used for derived divided powers in \cite[Lemme VIII.2.1.2.1]{Illusie2}.)
We want to show that this map of complexes induces an isomorphism
on cohomology over $[X/G]$.

By the exact sequence above for the big sheaf $\Omega^1$
on $[X/G]$, we can identify the big sheaf $\Omega^j$
in the derived category with a similar-looking
Koszul complex:
$$0\arrow \Lambda^j(S) \arrow \Lambda^{j-1}(S)\otimes \g^*\arrow
\cdots \arrow S^j(\g^*)\arrow 0.$$
We want to show that the obvious map from the Koszul complex
of vector bundles (in the previous paragraph)
to this complex of big sheaves induces
an isomorphism on cohomology over $[X/G]$. It suffices to show
that for each $0\leq i\leq j$, the map
$$H^*_G(X,\Omega^i_X\otimes S^{j-i}(\g^*))\arrow
H^*_G(X,\Lambda^i(S)\otimes S^{j-i}(\g^*))$$
is an isomorphism.

By section \ref{notation}, we can compute both of these cohomology groups
on the \u{C}ech simplicial space associated to the smooth
surjective morphism $X\arrow [X/G]$. This simplicial space can be written
as $(X\times EG)/G$, where all products are over $R$:
$$\xymatrix@C-10pt@R-10pt{
X \ar@/_1pc/@<-0.0ex>[r]
& X\times G \ar@<-0.5ex>[l]\ar@<0.5ex>[l]
\ar@/_1pc/@<-0.0ex>[r] \ar@/_1pc/@<-1.ex>[r]
& X\times G^2 \ar@<-1ex>[l]\ar@<0ex>[l]\ar@<1ex>[l]\cdots,
}$$

Since $X$ is affine, all the spaces in this simplicial
space are affine schemes. Therefore, for any $0\leq i\leq j$,
$H^*_G(X,\Omega^i_X\otimes S^{j-i}(\g^*))$
is the cohomology
of the complex of $H^0$ of the sheaves $\Omega^i_X\otimes O_{EG}\otimes 
S^{j-i}(\g^*)$ over the spaces making up $(X\times EG)/G$.
Likewise, $H^*_G(X,\Lambda^i(S)\otimes S^{j-i}(\g^*))$
is the cohomology of the complex of $H^0$ of the sheaves
$\Lambda^i(S)\otimes 
S^{j-i}(\g^*)$ over the spaces making up $(X\times EG)/G$.

Both of these complexes are spaces of $G$-invariants
of analogous complexes of $H^0$ of sheaves over the spaces making up
$X\times EG$. Moreover, all of these $G$-modules are induced
from representations of the trivial group, because $X\times G^{r+1}
\arrow (X\times G^{r+1})/G$ is a $G$-torsor with a section
for each $r\geq 0$. Indeed, a choice of section of this $G$-torsor
trivializes the torsor, and so the group of
sections of a $G$-equivariant sheaf
of $X\times G^{r+1}$ is the subspace of invariants tensored
with $O(G)$, as a $G$-module. (Note that trivializations
of these $G$-torsors cannot be made compatible with the face maps
of the simplicial space, in general.) 
And every tensor product $O(G)\otimes_R M$ for a $G$-module $M$
is injective as a $G$-module \cite[Proposition 3.10]{Jantzen}.
It follows that $H^i(G,O(G)\otimes_R M)=0$ for $i>0$
\cite[Lemma I.4.7]{Jantzen}.

Therefore, to show that the map of complexes
of $G$-invariants in the previous paragraph
is a quasi-isomorphism (as we want), it suffices
to show that the map of complexes of $H^0$ over $X\times EG$
is a quasi-isomorphism. And for that, we can forget about the $G$-action.
That is, we want to show that the map of complexes with $r$th
term (for $r\geq 0$)
$$H^0(X\times G^{r+1}, \Omega^i_X\otimes O_{G^{r+1}}\otimes 
S^{j-i}(\g^*))\arrow H^0(X\times G^{r+1}, \Omega^i_{X\times G^{r+1}}\otimes
S^{j-i}(\g^*))$$
is a quasi-isomorphism.

We can write $\Omega^i_{X\times G^{r+1}}$ as the direct sum
$\oplus_{l=0}^i\Omega^{i-l}_X\otimes \Omega^l_{G^{r+1}}$.
Moreover, this splitting is compatible with pullback along
the face maps of the simplicial scheme $X\times EG$. So the
map of complexes above
is the inclusion of a summand (corresponding to $l=0$).
It remains to show that for every $0<l\leq i$, the $l$th summand is a complex
with cohomology zero. Its $r$th term is
$$\Omega^{i-l}(X)\otimes_R \Omega^l(G^{r+1})
\otimes_R S^{j-i}(\g^*).$$
To analyze its cohomology, we use the well-known ``contractibility''
of $EG$, in the following form:

\begin{lemma}
\label{EGhomotopy}
Let $Y$ be an affine scheme over a ring $R$ with $Y(R)$ not empty.
For any sheaf $M$ of abelian groups on the big etale site of $R$,
the cohomology of the simplicial scheme $EY$ over $R$ coincides with
the cohomology of $\Spec(R)$:
$$H^i(EY,M)\cong H^i(R,M).$$
\end{lemma}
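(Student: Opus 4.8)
The plan is to exploit the existence of an $R$-point $y_0 \in Y(R)$ to build a simplicial homotopy between the identity map of the simplicial scheme $EY$ and the constant map $EY \to \Spec(R) \to EY$ that collapses everything to $y_0$. Recall that $(EY)_n = Y^{n+1}$ with the face and degeneracy maps coming from the standard cosimplicial structure on $\{0,\dots,n\}$: face map $\partial_i$ deletes the $i$th coordinate, degeneracy $\sigma_i$ repeats it. There is a classical ``extra degeneracy'' picture here: the maps $s_{-1}\colon Y^{n+1} \to Y^{n+2}$ sending $(x_0,\dots,x_n) \mapsto (y_0, x_0, \dots, x_n)$ — inserting the chosen point in the zeroth slot — together with the projection away from the zeroth slot, exhibit $EY$ as a simplicially contractible object (an ``aspherical'' simplicial scheme in the sense of having an augmentation to $\Spec(R)$ admitting a simplicial homotopy inverse). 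Concretely, the two maps $EY \to EY$ given by the identity and by $(x_0,\dots,x_n) \mapsto (y_0,\dots,y_0)$ are simplicially homotopic, with the homotopy written down explicitly from the $s_{-1}$'s.

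Next I would translate this into cohomology. For any sheaf $M$ of abelian groups on the big etale site of $R$ (equivalently, a functor on $R$-schemes), pulled back to each $Y^{n+1}$, the cohomology of the simplicial scheme $EY$ with coefficients in $M$ is computed by a spectral sequence $E_1^{pq} = H^q_{\et}(Y^{p+1}, M) \Rightarrow H^{p+q}_{\et}(EY, M)$, exactly as recorded in Section \ref{notation}. A simplicial homotopy of maps of simplicial schemes induces a cochain homotopy on the associated (co)chain complexes of etale cohomology groups, hence equal maps on the cohomology of the simplicial scheme. Applying this to the homotopy above, the identity map on $H^*_{\et}(EY, M)$ equals the map induced by the collapse $EY \to \Spec(R) \xrightarrow{y_0} EY$; since that map factors through $H^*_{\et}(R, M)$, and the augmentation $\Spec(R) \to EY$ composed the other way is the identity on $\Spec(R)$, we get mutually inverse isomorphisms $H^i_{\et}(EY, M) \cong H^i_{\et}(R, M)$.

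The one genuine subtlety — and the step I expect to need the most care — is making the homotopy argument work at the level of sheaf cohomology rather than just for a single sheaf pulled back along projections. The cleanest route is to note that the augmentation $\epsilon\colon EY \to \Spec(R)$ and the section $s = s_{y_0}\colon \Spec(R) \to EY$ (landing in degree $0$) satisfy $\epsilon s = \mathrm{id}$ on the nose, and $s\epsilon$ is simplicially homotopic to $\mathrm{id}_{EY}$ via the explicit extra-degeneracy homotopy; one then invokes that a simplicial homotopy induces, for any abelian-sheaf coefficient system on the big site, a homotopy of the corresponding complexes computing cohomology of the simplicial object (this is a formal consequence of the construction of cohomology of a simplicial space as the total complex / the functoriality of the spectral sequence, applied term-by-term and assembled via the Eilenberg–Zilber / Dold–Puppe machinery). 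An alternative, which avoids spelling out the homotopy, is to prove the analogous statement first for sheaves of the form $R^q \epsilon_* $ of a fixed sheaf and run the spectral sequence, but I would prefer the direct homotopy argument since it applies uniformly in $q$. Either way, no characteristic hypotheses enter, so the lemma holds over an arbitrary base ring $R$, which is exactly what the surrounding application to $X \times EG$ requires.
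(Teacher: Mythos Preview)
Your approach is correct and shares the same underlying idea as the paper's proof: both exploit the ``insert the chosen $R$-point in slot $0$'' map $(x_0,\dots,x_n)\mapsto (y_0,x_0,\dots,x_n)$ to contract $EY$. The paper, however, is much more direct. It simply asserts that since $Y$ is affine, $H^*(EY,M)$ is the cohomology of the single complex $0\to M(Y)\to M(Y^2)\to\cdots$, and then writes down the explicit chain contracting homotopy $(F\varphi)(y_0,\dots,y_{r-1})=\varphi(1,y_0,\dots,y_{r-1})$ in one line. Your route packages the same formula as an extra degeneracy yielding a simplicial homotopy, and then invokes the general principle that simplicial homotopies induce chain homotopies on the total complex computing $H^*(EY,M)$.

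What each buys: the paper's argument is shorter but, read literally, presupposes that the spectral sequence $E_1^{pq}=H^q(Y^{p+1},M)$ collapses to the $q=0$ row---true for the quasi-coherent sheaves $\Omega^j$ actually used in the applications, but not for an arbitrary abelian sheaf on the big \'etale site. Your simplicial-homotopy formulation avoids that implicit assumption and proves the lemma in the generality stated, at the cost of invoking (or spelling out) the ``simplicial homotopy $\Rightarrow$ cochain homotopy on the total complex'' fact. So your version is a genuine, if modest, strengthening of what the paper's proof literally establishes.
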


\begin{proof}
Since $Y$ is affine, $H^*(EY,M)$ is the cohomology of an explicit complex
$$0\arrow M(Y)\arrow M(Y^2)\arrow \cdots.$$
Choosing a point $1\in Y(R)$ gives an explicit chain homotopy
from the identity map to the complex $M(R)$ in degree 0:
$$(F\varphi)(y_0,\ldots,y_{r-1})=\varphi(1,y_0,\ldots,y_{r-1})$$
for $\varphi\in M(Y^{r+1})$ with $r\geq 0$.
\end{proof}

Returning to the proof of Theorem \ref{equivariant}:
we want to show that for $l>0$, the complex with $r$th term
$$\Omega^{i-l}(X)\otimes_R \Omega^l(G^{r+1})
\otimes_R S^{j-i}(\g^*)$$
has zero cohomology.
By Lemma \ref{EG} (applied
to the sheaf $\Omega^l$ on the big etale site of $R$
and the simplicial scheme $EG$),
the complex above has cohomology
equal to $\Omega^{i-l}(X)\otimes \Omega^l(\Spec R)\otimes
S^{j-i}(\g^*)$ in degree 0 and zero in other degrees.
Since $l>0$, the cohomology in degree 0 also vanishes.
The proof is complete.
\end{proof}

The argument works verbatim to prove a twisted version
of Corollary \ref{isosmooth},
where the sheaf $\Omega^j$ on $BG$ is tensored with the vector bundle
associated to any $G$-module. The generalization will not be needed
in this paper, but we state it for possible later use.

\begin{theorem}
\label{isotwist}
Let $G$ be a smooth affine group scheme
over a commutative ring $R$. 
Let $M$ be a $G$-module that is flat over $R$.
Then there is a canonical isomorphism
$$H^i(BG,\Omega^j\otimes M)\cong H^{i-j}(G, S^j(\g^*)\otimes M).$$
\end{theorem}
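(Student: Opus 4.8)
The plan is to follow the proof of Theorem \ref{equivariant} in the special case $X = \Spec(R)$, carrying an extra flat $G$-module $M$ tensored throughout, and to check that every step of that argument is compatible with this twist. First I would observe that the short exact sequence of big sheaves on $BG$,
$$0\arrow \Omega^1 \arrow S\arrow \g^*\arrow 0,$$
remains exact after tensoring (over $O_{BG}$) with the vector bundle associated to $M$, since a vector bundle is flat; hence $\Omega^1\otimes M$ is quasi-isomorphic to the two-term complex $(S\otimes M\arrow \g^*\otimes M)$. Next, taking the $j$th derived exterior power of the map of complexes $(\Omega^1_X\arrow \g^*)\arrow(S\arrow\g^*)$ and then tensoring with $M$ gives, by the same Illusie-type identification of the derived exterior power of a two-term complex of flats with a Koszul complex, a map from
$$0\arrow \Omega^j_X\otimes M \arrow \Omega^{j-1}_X\otimes\g^*\otimes M\arrow \cdots \arrow S^j(\g^*)\otimes M\arrow 0$$
to the big sheaf $\Omega^j\otimes M$ in $D(BG_{\et},O_{BG})$, where with $X=\Spec(R)$ one has $\Omega^{>0}_X=0$ so the source collapses to the single sheaf $S^j(\g^*)\otimes M$ placed in degree $j$. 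The claim is then that this map is an isomorphism on cohomology over $BG$.

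To prove that, I would reduce, exactly as in the untwisted proof, to showing for each $0\leq i\leq j$ that
$$H^*(BG,\Omega^i_X\otimes S^{j-i}(\g^*)\otimes M)\arrow H^*(BG,\Lambda^i(S)\otimes S^{j-i}(\g^*)\otimes M)$$
is an isomorphism, and compute both sides via the \u{C}ech simplicial scheme $EG$ (here $(X\times EG)/G$ degenerates to $BG$ itself). Because all terms are induced $G$-modules of the form $O(G)\otimes_R N$ — the extra factor $M$ is just absorbed into $N$ — their higher cohomology still vanishes, so it suffices to check quasi-isomorphism of the corresponding complexes of $H^0$ over $EG$, forgetting the $G$-action. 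There the splitting $\Omega^i_{G^{r+1}}=\oplus_{l=0}^i \Omega^{i-l}_{\Spec R}\otimes\Omega^l_{G^{r+1}}$ (with $\Omega^{>0}_{\Spec R}$ possibly nonzero, but the $X$-differentials were already accounted for) exhibits the target as the source plus summands indexed by $l>0$, each of which has $r$th term $\Omega^l(G^{r+1})\otimes_R (\text{fixed }R\text{-module involving }M)$ and hence, by Lemma \ref{EGhomotopy} applied to the sheaf $\Omega^l$, has cohomology $\Omega^l(\Spec R)$ tensored with that fixed module in degree $0$ and zero elsewhere; for $l>0$ this vanishes too. Combining gives the desired isomorphism, and chasing through the identifications of quasi-coherent sheaves on $BG$ with $G$-modules and of their sheaf cohomology with $H^*(G,-)$ (as in the proof of Corollary \ref{isosmooth}) yields $H^i(BG,\Omega^j\otimes M)\cong H^{i-j}(G,S^j(\g^*)\otimes M)$.

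The only real point requiring care — the "main obstacle," such as it is — is the bookkeeping around flatness of $M$: I must use flatness to know that tensoring with $M$ preserves the exactness of $0\arrow\Omega^1\arrow S\arrow\g^*\arrow 0$ and that the derived exterior power computation is unaffected, and I should check that $M$ being flat over $R$ is exactly what is needed for $O(G)\otimes_R M$ (and more generally $O(G)\otimes_R N$ with $N$ a subquotient built from $M$) to remain an injective, hence acyclic, $G$-module via \cite[Proposition 3.10]{Jantzen} and \cite[Lemma I.4.7]{Jantzen}. Once that is in place, every other step is word-for-word the proof of Theorem \ref{equivariant} with "$\otimes M$" appended, which is why it is accurate to say the argument works verbatim.
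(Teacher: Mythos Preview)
Your proposal is correct and matches the paper's own treatment, which simply asserts that ``the argument works verbatim'' with $M$ tensored through; you have spelled out exactly that. Two small corrections: $\Omega^{>0}_{\Spec R/R}$ is always zero (not ``possibly nonzero''), and flatness of $M$ is not what makes $O(G)\otimes_R M$ acyclic as a $G$-module---that holds for any $M$ by \cite[Proposition 3.10, Lemma I.4.7]{Jantzen}---rather, flatness is needed so that tensoring with $M$ preserves the various quasi-isomorphisms (the short exact sequence for $\Omega^1$, and the computation of the cohomology of $\Omega^l(G^{\bullet+1})\otimes_R N$ via Lemma~\ref{EGhomotopy}).
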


\section{Flat group schemes}
\label{coLie}

We now describe the Hodge cohomology of the classifying stack
of a group scheme $G$
which need not be smooth, generalizing Corollary \ref{isosmooth}.
The analog of the co-Lie algebra
$\g^*$ in this generality is the co-Lie complex $l_G$ in the derived
category of $G$-modules,
defined by Illusie \cite[section VII.3.1.2]{Illusie2}. Namely,
$l_G$ is the pullback of the cotangent complex of $G\arrow \Spec(R)$
to $\Spec(R)$, via the section $1\in G(R)$. (The cotangent
complex $L_{X/Y}$ of a morphism $X\arrow Y$ of schemes
is an object of the quasi-coherent derived
category of $X$;
if $X$ is smooth over $Y$, then $L_{X/Y}$ is the sheaf $\Omega^1_{X/Y}$.)
The cohomology of $l_G$ in degree 0 is the $R$-module $\omega^1_G$,
the restriction of $\Omega^1_G$ to the identity $1\in G(R)$;
thus $\omega^1_G$ is the co-Lie algebra $\g^*$ if $G$ is smooth over $R$.
The complex $l_G$ has zero cohomology except in cohomological
degrees $-1$ and 0. If $G$
is smooth, then $l_G$ has cohomology concentrated in degree 0.

\begin{theorem}
\label{iso}
Let $G$ be a flat affine group scheme of finite presentation
over a commutative ring $R$. Then there is a canonical
isomorphism
$$H^i(BG,\Omega^j)\cong H^{i-j}(G, S^j(l_G)).$$
\end{theorem}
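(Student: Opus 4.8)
The plan is to reduce Theorem~\ref{iso} to the smooth case (Corollary~\ref{isosmooth}) by embedding $G$ into a smooth group scheme and then replacing the co-Lie algebra by the co-Lie complex. First I would choose a closed embedding $G\hookrightarrow \Gamma$ into a smooth affine group scheme $\Gamma$ over $R$ --- for instance $\Gamma=GL_n$ after picking a faithful representation, noting that any flat affine group scheme of finite presentation admits such an embedding, at least fppf-locally on $\Spec(R)$, which will suffice after a base-change/descent argument. Then $\Gamma/G$ is a smooth affine scheme (smooth because $\Gamma\to \Gamma/G$ is faithfully flat and $\Gamma$ is smooth; affine because $G$ is, by a theorem on quotients) on which $G$ acts freely, and the quotient stack $[(\Gamma/G)/G]$ together with Lemma~\ref{EG} gives $H^i(BG,\Omega^j)\cong H^i([(\Gamma/G)/G],\Omega^j)$. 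Now Theorem~\ref{equivariant} applies with $X=\Gamma/G$: it identifies $H^i(BG,\Omega^j)$ with the hypercohomology over $[(\Gamma/G)/G]$ of the Koszul complex $\Lambda^j L_{[X/G]}$ built from $\g\to T(\Gamma/G)$.

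The next step is to compute that hypercohomology in terms of $G$-cohomology of symmetric powers of $l_G$. The key observation is that the two-term complex $\Omega^1_X\to \g^*$ appearing in Theorem~\ref{equivariant} is, as a complex of $G$-equivariant vector bundles on $X=\Gamma/G$, precisely a representative of $L_{[X/G]}$; and pulling this back to $\Spec(R)$ via the section $\Spec(R)\to X\to[X/G]$ (which exists since $X=\Gamma/G$ has the distinguished point $\bar 1$) should recover the co-Lie complex $l_G$ in the derived category of $G$-modules. Concretely, the cotangent complex of $BG$ over $R$, pulled back along $\Spec(R)\to BG$, is $l_G[1]$ shifted, and the identification $\Lambda^j L_{BG}\simeq S^j(l_G)$ (up to shift) is exactly Illusie's description of derived exterior powers of a complex concentrated in degrees $-1,0$ as a divided-power/Koszul construction --- the same input already cited in the proof of Theorem~\ref{equivariant} via \cite[Proposition II.4.3.1.6]{Illusie1} and \cite[Lemme VIII.2.1.2.1]{Illusie2}. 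Passing from the quotient-stack presentation to $BG$ itself via the \v{C}ech simplicial space $E(\Gamma/G)/G$ and Lemma~\ref{EGhomotopy}-style contractibility of $E(\Gamma/G)$ then collapses the $X$-direction, leaving the cohomology of $G$ with coefficients in the complex $S^j(l_G)$, i.e.\ $H^{i-j}(G,S^j(l_G))$ after accounting for the degree shift coming from $l_G$ sitting in degrees $-1,0$.

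Finally I would check that the isomorphism is canonical and independent of the auxiliary embedding $G\hookrightarrow\Gamma$: this follows because both sides are defined intrinsically (the left side is etale cohomology of the stack $BG$, the right side is $\Ext_G(R,S^j(l_G))$ with $l_G$ the intrinsically-defined co-Lie complex), and the comparison map we construct is natural in the presentation, so any two choices of $\Gamma$ are compared through a common refinement $\Gamma\times\Gamma'$.

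The main obstacle, I expect, is the last point of the second paragraph: carefully matching the two-term equivariant complex $[\Omega^1_{\Gamma/G}\to\g^*]$ with the pulled-back cotangent complex $l_G$, including the correct cohomological degrees and the identification of $\Lambda^j$ of a length-two complex in degrees $(-1,0)$ with $S^j$ of its "$H^{-1}$ part" tensored against exterior powers of its "$H^0$ part" --- the Koszul-to-divided-power dictionary of Illusie. One must be scrupulous that this is an isomorphism of complexes of $G$-modules in the derived category, not merely an abstract isomorphism on cohomology sheaves, since we need it before taking $G$-cohomology. The smooth case is the degenerate instance where $l_G$ is concentrated in degree $0$ and $S^j(l_G)=S^j(\g^*)$, recovering Corollary~\ref{isosmooth} and confirming the normalization of the shift.
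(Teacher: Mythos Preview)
There is a genuine gap in your setup. You assert that $G$ acts freely on $\Gamma/G$ and that $[(\Gamma/G)/G]$ recovers $BG$, but neither holds: once you form $\Gamma/G$ by the right $G$-action there is no residual free $G$-action on it. The correct presentation is $BG\cong [(\Gamma/G)/\Gamma]$, with the \emph{smooth} group $\Gamma$ acting by left translation. This is not a cosmetic slip, because Theorem~\ref{equivariant} requires the acting group to be smooth; invoking it with the non-smooth $G$ is exactly what Theorem~\ref{iso} is meant to go beyond, so the appeal is circular. If you repair this by letting $\Gamma$ act, the Koszul complex becomes $[\Omega^1_{\Gamma/G}\to(\mathrm{Lie}\,\Gamma)^*]$, which does represent $L_{BG}$ pulled back to $\Gamma/G$; its restriction to the base point is $l_G[-1]$, and d\'ecalage gives $\Lambda^j(l_G[-1])\cong S^j(l_G)[-j]$, after which the argument can be completed. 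A second gap: the claim that $\Gamma/G$ is affine ``because $G$ is, by a theorem on quotients'' is unsupported---for instance $GL_n/B$ is projective for a Borel $B$---and affineness of $X$ is a hypothesis in Theorem~\ref{equivariant}.

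The paper's proof is close in spirit to the corrected version of your plan but bypasses Theorem~\ref{equivariant} altogether. It takes an abstract smooth affine presentation $U\to BG$ (existence is general stack theory), sets $E=U\times_{BG}\Spec(R)$, and works directly on the \v{C}ech simplicial scheme $EE/G$. The key input is the transitivity triangle $l_G[-1]\to\pi^*\Omega^1_{EE/G}\to\Omega^1_{EE}$ for cotangent complexes; taking derived $\Lambda^j$ filters $\pi^*\Omega^j_{EE/G}$ with graded pieces $\Omega^{j-m}_{EE}\otimes\Lambda^m(l_G[-1])$, and contractibility of $EE$ (Lemma~\ref{EGhomotopy}) kills every piece with $m<j$. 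What survives is $S^j(l_G)[-j]$, and an acyclicity argument identifies the resulting $G$-invariants complex with the one computing $H^{*-j}(G,S^j(l_G))$. Your instinct to ``collapse the $X$-direction via contractibility'' is exactly right; the paper just executes it at the level of the simplicial presentation rather than routing through the equivariant Koszul description.
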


\begin{proof}
As discussed in section \ref{notation}, we can compute $H^*(BG,\Omega^j)$
as the etale cohomology with coefficients in $\Omega^j$
of the \u{C}ech simplicial space associated to any smooth
algebraic space $U$ over $R$ with a smooth surjective morphism from $U$
to the stack $BG$. The assumption on $G$ implies that $BG$ is
a quasi-compact algebraic stack over $R$, and so there is an affine
scheme $U$ with a smooth surjective morphism $U\arrow BG$
\cite[Tags 06FI and 04YA]{Stacks}. By Lemma \ref{flatBG},
$BG$ is smooth over $R$, and so $U$ is smooth over $R$.
Let $E=U\times_{BG}\Spec(R)$; then $E$ is a smooth $R$-space
with a free $G$-action such that $U=E/G$. Also, $E$ is affine
because $U$ and $G$ are affine.

By section \ref{notation}, $H^*(BG,\Omega^j)$
is the etale cohomology with coefficients in $\Omega^j$ of the simplicial
algebraic space $EE/G$:
$$\xymatrix@C-10pt@R-10pt{
E/G \ar@/_1pc/@<-0.0ex>[r] 
& E^2/G \ar@<-0.5ex>[l]\ar@<0.5ex>[l]
\ar@/_1pc/@<-0.0ex>[r] \ar@/_1pc/@<-1.0ex>[r]
& E^3/G \ar@<-1ex>[l]\ar@<0ex>[l]\ar@<1ex>[l]\cdots
}$$

By the properties of $E$ and $Y$ above,
$E^{n+1}/G$ is an affine scheme for all $n\geq 0$. Since
$H^*(BG,\Omega^j)$ is the cohomology with coefficients in $\Omega^j$
of the simplicial scheme $EE/G$, this is the cohomology
of the cochain complex
$$0\arrow \Omega^j(E/G)\arrow \Omega^j(E^2/G)\arrow \cdots.$$

As in the proof of Theorem \ref{equivariant}, this complex
is the $G$-invariants of the complex
$$0\arrow H^0(E,\pi^*(\Omega^j_{E/G}))\arrow H^0(E^2,
\pi^*(\Omega^j_{E^2/G}))\arrow \cdots,$$
where we write $\pi$ for the morphism $E^{n+1}\arrow E^{n+1}/G$
for any $n\geq 0$. 

For any smooth $R$-scheme $X$ with a free action of $G$,
there is a canonical exact triangle in the quasi-coherent
derived category of $G$-equivariant
sheaves on $X$:
$$\pi^*(\Omega^1_{X/G})\arrow \Omega^1_X\arrow l_G,$$
where we write $l_G$ for the pullback of the co-Lie complex
$l_G$ from the stack $BG$ over $R$
to $X$. To deduce this from Illusie's results on the cotangent complex
$L_{X/Y}$, let $Y=X/G$ and $S=\Spec(R)$, and use
the transitivity exact triangle for $X\arrow Y\arrow S$
in the derived category of $X$ \cite[II.2.1.5.2]{Illusie1}:
$$\pi^*L_{Y/S}\arrow L_{X/S}\arrow L_{X/Y}.$$
Since $X$ is smooth over $S$, so is $Y$ (even though $G$ need not be);
so $L_{Y/S}\cong \Omega^1_{Y/S}$ and $L_{X/S}\cong \Omega^1_{X/S}$.
Also, since $X\arrow Y$ is a $G$-torsor in the fppf topology,
$L_{X/Y}$ is the pullback of an object $l_{X/Y}$ on $Y$
\cite[VII.2.4.2.8]{Illusie2}. Furthermore,
$l_{X/Y}$ in the fppf topology is the pullback of $l_G$
via the morphism from $Y$ to the stack $BG$ corresponding to
the $G$-torsor $X\arrow Y$
\cite[VII.3.1.2.6]{Illusie2}.

Applying this to $E^{n+1}/G$ for any $n\geq 0$,
we get an exact triangle
$$\pi^*(\Omega^1_{EE/G})\arrow \Omega^1_{EE}\arrow l_G$$
in $D_G(EE)$, or equivalently
$$l_G[-1]\arrow \pi^*(\Omega^1_{EE/G})\arrow \Omega^1_{EE}.$$
It follows that for any $j\geq 0$,
$\pi^*(\Omega^j_{EE/G})$ has a filtration in the derived category
with quotients $\pi^*(\Omega^{j-m}_{EE/G})\otimes \Lambda^{m}(l_G[-1])$
for $m=0,\ldots,j$.

If $E(R)$ is nonempty, then $H^i(EE,\Omega^j)\cong H^i(\Spec(R),\Omega^j)$,
by Lemma \ref{EGhomotopy}.
That group is zero unless $i=j=0$, in which case it is $R$. By faithfully
flat descent, the same conclusion holds under our weaker assumption
that $E\arrow \Spec(R)$ is smooth and surjective. Therefore,
in the filtration above, all objects but one have zero cohomology in all
degrees over $EE$. We deduce that the homomorphism
$$H^i(EE,\Lambda^j(l_G[-1]))\arrow H^i(EE,\pi^*(\Omega^j_{EE/G}))$$
is an isomorphism of $G$-modules for all $i$.
By Illusie's ``d\'ecalage'' isomorphism
\cite[Proposition I.4.3.2.1(i)]{Illusie1}, we can
write $S^j(l_G)[-j]$ instead of $\Lambda^j(l_G[-1])$.

The cochain complex $O(EE)$ has cohomology $R$ in degree 0
and 0 otherwise, by Lemma \ref{EGhomotopy} again. So the
complex of global sections of the trivial vector bundle $S^j(l_G)$
over $EE$ is isomorphic, in the derived category of $G$-modules,
to the complex of $G$-modules $S^j(l_G)$. We conclude
that the complex of sections of $\pi^*(\Omega^j_{EE/G})$ over $EE$
is isomorphic
to $S^j(l_G)[-j]$ in the derived category of $G$-modules.

Finally, we observe that each $G$-module in this complex,
$$M:=H^0(E^{n+1},\pi^*(\Omega^j_{E^{n+1}/G}))$$
for $n\geq 0$,
is acyclic (meaning that $H^{>0}(G,M)=0$). More generally,
for any affine $R$-scheme $Y$ with a free $G$-action such
that $Y/G$ is affine, and 
any quasi-coherent sheaf $F$ on $Y/G$,
$M:=H^0(Y,\pi^*F)$ is acyclic. Indeed, this holds if 
$Y\arrow Y/G$ is a trivial $G$-bundle, since then
$M=O(G)\otimes F$ and so $M$ is acyclic \cite[Lemma 4.7]{Jantzen}.
We can prove acyclicity in general by pulling the $G$-bundle
over $Y/G$ back to a $G$-bundle over $Y$, which is trivial;
then $H^{>0}(G,M)\otimes_{O(Y/G)}O(Y)$ is 0
by \cite[Proposition 4.13]{Jantzen}, and so $H^{>0}(G,M)=0$
by faithfully flat descent.

We conclude that the complex computing $H^*(BG,\Omega^j)$
is the same one that computes $H^*(G,S^j(l_G)[-j])$.
\end{proof}

\section{Good filtrations}

In this section, we explain how known results in representation
theory imply calculations of the Hodge cohomology of classifying
spaces in many cases, via Theorem \ref{iso}. This is not logically necessary
for the rest of the paper: Theorem \ref{non-torsion}
is a stronger calculation of Hodge cohomology,
based on ideas from homotopy theory.

Let $G$ be a split reductive group over a field $k$. (A textbook
reference on split reductive groups is \cite[Chapter 21]{Milne}.)
A {\it Schur module }for $G$
is a module of the form $H^0(\lambda)$ for a dominant weight $\lambda$.
By definition, $H^0(\lambda)$ means $H^0(G/B,L(\lambda))$, where
$B$ is a Borel subgroup and $L(\lambda)$ is the line bundle
associated to $\lambda$. For $k$ of characteristic zero,
the Schur modules are exactly the irreducible representations
of $G$. Kempf showed that the dimension of the Schur modules
is independent of the characteristic of $k$ \cite[Chapter II.4]{Jantzen}.
They need not be irreducible in characteristic $p$, however.

A $G$-module $M$ has a {\it good filtration }if there is a sequence
of submodules $0\subset M_0\subset M_1\subset \cdots$ such that
$M=\cup M_j$ and each quotient $M_i/M_{i-1}$ is a Schur module.
One good feature of Schur modules is that their cohomology
groups are known, by Cline-Parshall-Scott-van der Kallen
\cite[Proposition 4.13]{Jantzen}. Namely,
$$H^i(G,H^0(\lambda))\cong \begin{cases}
k &\text{if }i=0 \text{ and }\lambda=0\\
0 &\text{otherwise}.
\end{cases}$$
As a result, $H^i(G,M)=0$ for all $i>0$ when $M$ has a good filtration.

The following result was proved by Andersen-Jantzen and Donkin
\cite[Proposition and proof of Theorem 2.2]{Donkin},
\cite[II.4.22]{Jantzen}. The statement on the ring of invariants
incorporates earlier work by Kac and Weisfeiler.
Say that a prime number $p$ is {\it good }for a reductive group $G$
if $p\neq 2$ if $G$ has a simple factor not of type $A_n$,
$p\neq 2,3$ if $G$ has a simple factor of exceptional type,
and $p\neq 2,3,5$ if $G$ has an $E_8$ factor.

\begin{theorem}
\label{donkin}
Let $G$ be a split reductive group over a field $k$. Assume either
that $G$ is a simply connected semisimple
group and $\cha(k)$ is good for $G$, or that $G=GL(n)$.
Then the polynomial ring
$O(\g)=S(\g^*)$ has a good filtration as a $G$-module, and the ring
of invariants $O(\g)^G$ is a polynomial ring over $k$, with generators
in the fundamental degrees of $G$.
\end{theorem}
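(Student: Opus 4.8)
The plan is to establish the good-filtration statement first, and then read off the structure of the invariant ring from it together with classical facts about Weyl-group invariants. For the good-filtration statement, the key point is that it suffices to know that the coadjoint module $\g^*=\omega^1_G$ has a good filtration: by Mathieu's theorem that the class of $G$-modules admitting a good filtration is closed under symmetric powers \cite[II.4]{Jantzen}, it then follows that each $S^j(\g^*)$, and hence $O(\g)=S(\g^*)$, has one. For $G$ simple simply connected with $\cha(k)$ good, I would check that $\g^*$ is itself a Schur module $H^0(\tilde\alpha)$ with $\tilde\alpha$ the highest root: this is immediate when the adjoint module $\g$ is irreducible, and the only case compatible with our hypotheses in which $\g$ is reducible is type $A_\ell$ with $p\mid \ell+1$, where $\g\cong V(\tilde\alpha)$ is a Weyl module (with the $1$-dimensional centre as a submodule), so that again $\g^*\cong H^0(\tilde\alpha)$. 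For $G$ simply connected semisimple, $\g^*=\bigoplus_i\g_i^*$ is a direct sum of Schur modules (each $\g_i^*$ being a Schur module for the corresponding simple factor $G_i$, hence for $G$), so $\g^*$ has a good filtration. For $G=GL(n)$ the trace form gives a $G$-isomorphism $\g^*\cong\g=\gl_n\cong V\otimes V^*$, where $V$ is the standard module; $V$ and $V^*$ are Schur modules, so $V\otimes V^*$ has a good filtration since that class is also closed under tensor products \cite[II.4]{Jantzen}. Thus $O(\g)$ has a good filtration in every case, and in particular $H^{>0}(G,O(\g))=0$ by the vanishing $H^i(G,H^0(\lambda))=0$ for $i>0$ recalled above.

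Next I would use the good filtration to compute the Hilbert series of $O(\g)^G$. Since $S^j(\g^*)$ has a good filtration, $\dim_k O(\g)^G_j=[S^j(\g^*):H^0(0)]$, which is the coefficient of $\chi(0)$ when the formal character $\operatorname{ch}S^j(\g^*)$ is expanded in the basis of Weyl characters $\chi(\mu)=\operatorname{ch}H^0(\mu)$. Because $\operatorname{ch}\g^*=\operatorname{ch}\g$ is the same Weyl character $\chi(\tilde\alpha)$ (a sum of such, for semisimple $G$) as in characteristic zero, these multiplicities are independent of the good prime $p$ and agree with their characteristic-zero values; there, by the classical Chevalley restriction theorem, $O(\g)^G$ is a polynomial ring on generators of degrees equal to the fundamental degrees $d_1,\dots,d_r$. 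Hence over $k$ the graded ring $O(\g)^G$ has Hilbert series $\prod_{i=1}^r(1-t^{d_i})^{-1}$.

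To upgrade this to an isomorphism of rings, I would restrict to a maximal torus $T$. Over $\bar k$ there is, since $p$ is good, a regular semisimple element $t_0\in\t$, whose $G$-orbit has dimension equal to the number of roots, so $G\cdot\t$ is dense in $\g$ and the restriction homomorphism $O(\g)^G\to O(\t)^W=S(X^*(T)\otimes_\Z k)^W$ is injective. By a theorem of Demazure, $S(X^*(T)\otimes_\Z k)^W$ is a polynomial ring on generators of degrees $d_1,\dots,d_r$ as soon as $p$ is not a torsion prime for $G$, which holds here because a good prime is never bad and every torsion prime is bad. Comparing Hilbert series, the injection $O(\g)^G\hookrightarrow S(X^*(T)\otimes_\Z k)^W$ is then an isomorphism in each degree, so $O(\g)^G$ is the asserted polynomial ring.

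The main obstacle is the input feeding the good-filtration statement. Mathieu's theorem that tensor products and symmetric powers of good-filtration modules again have good filtrations is a substantial black box, and the identification of $\g$ (or $\g^*$) with a Weyl or Schur module in good characteristic rests on a case analysis through the Dynkin types; an alternative, still type-dependent, route would exhibit $\g$ as a $G$-direct summand of some $\operatorname{End}(V)=V\otimes V^*$ with $V$ having a good filtration, via an invariant trace form. Once the good filtration is available, the passage to the invariant ring is essentially formal, given the classical Chevalley theorem in characteristic zero and Demazure's description of the Weyl-group invariants $S(X^*(T)\otimes_\Z k)^W$ away from the torsion primes.
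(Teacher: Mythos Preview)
The paper does not prove this theorem; it is quoted as a known result of Andersen--Jantzen and Donkin, with citations to \cite[Proposition and proof of Theorem 2.2]{Donkin} and \cite[II.4.22]{Jantzen}. So there is no ``paper's own proof'' to compare against.

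Your sketch, however, has a genuine gap at the very first step. You invoke ``Mathieu's theorem that the class of $G$-modules admitting a good filtration is closed under symmetric powers,'' but that is not what Mathieu's theorem says: it asserts closure under \emph{tensor products}. In characteristic $p$, the symmetric power $S^j(M)$ is the $S_j$-coinvariants of $M^{\otimes j}$, hence a quotient of $M^{\otimes j}$ but not a direct summand once $j\ge p$; and the class of modules with a good filtration is closed under direct summands and extensions, not under arbitrary quotients. So even granting (as you correctly check, type by type) that $\g^*$ is a single Schur module $H^0(\tilde\alpha)$ in the simply connected case, and that $(\g^*)^{\otimes j}$ therefore has a good filtration, nothing you have written produces a good filtration on $S^j(\g^*)$. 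The same objection applies to your $GL(n)$ argument: $\g^*\cong V\otimes V^*$ has a good filtration, but passing to $S^j(V\otimes V^*)$ again requires more than the tensor-product theorem. The actual arguments in the cited references are quite different and do not proceed through any such formal closure property: Andersen--Jantzen use geometric input (normality of the nilpotent variety, the Springer resolution $T^*(G/B)\to\mathcal N$, and cohomology vanishing for line bundles on $G/B$) to obtain the good filtration on $O(\g)$ directly, while Donkin works with the conjugation action on the coordinate ring $k[G]$.

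The second half of your plan---reading off the structure of $O(\g)^G$ by comparing Hilbert series with $O(\t)^W$ via Demazure's theorem and the density of $G\cdot\t$---is sound in outline and is indeed how the invariant-ring statement is typically extracted once the good filtration is in hand (the paper uses exactly this Demazure input later, in the proof of Theorem~\ref{integral}). But that half rests entirely on the good-filtration statement, which your first paragraph does not establish.
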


It follows that, under these assumptions, $H^{>0}(G,S^j(\g^*))$ is zero
for all $j\geq 0$. Equivalently, $H^i(BG,\Omega^j)=0$ for $i\neq j$,
by Theorem \ref{iso}. We prove this under the weaker
assumption that $p$ is not a torsion prime in Theorem \ref{non-torsion}.

\section{K\"unneth formula}

The K\"unneth formula holds for Hodge cohomology, in the following form.
The hypotheses apply to the main case studied in this paper:
classifying stacks $BG$ with $G$ an affine group scheme of finite
type over a field.

\begin{proposition}
\label{kunneth}
Let $X$ and $Y$ be quasi-compact
algebraic stacks with affine diagonal over a field $k$.
Then
$$H^*_{\Ho}((X\times_k Y)/k)\cong H^*_{\Ho}(X/k)\otimes_k H^*_{\Ho}(Y/k).$$
\end{proposition}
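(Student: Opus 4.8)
The plan is to compute both sides using the Čech description of Hodge cohomology from Section~\ref{notation}, and to reduce the K\"unneth statement for stacks to the K\"unneth statement for affine schemes, where it is standard.

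First I would choose smooth surjective morphisms $U \arrow X$ and $V \arrow Y$ from affine schemes, which exist because $X$ and $Y$ are quasi-compact with affine diagonal (hence quasi-compact algebraic stacks in the sense that lets us apply \cite[Tags 06FI and 04YA]{Stacks}, just as in the proof of Theorem~\ref{iso}). Then $U \times_k V \arrow X \times_k Y$ is smooth surjective, and its Čech simplicial scheme is the diagonal of the bisimplicial scheme $C(U/X) \times_k C(V/Y)$, whose space of $(p,q)$-simplices is the affine scheme $U^{p+1}_X \times_k V^{q+1}_Y$. So $H^*_{\Ho}((X\times_k Y)/k)$ is the cohomology of the total complex of a double complex whose $(p,q)$-term is $\Omega^*(U^{p+1}_X \times_k V^{q+1}_Y)$, with the Hodge grading by form-degree. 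Here "$\Omega^*$" means the direct sum of the $\Omega^j$, and for a product of two affine $k$-schemes $A$ and $B$ one has the algebraic K\"unneth isomorphism for differential forms, $\Omega^*(A\times_k B) \cong \Omega^*(A)\otimes_k \Omega^*(B)$, compatibly with the form-grading and with pullback along all the face and degeneracy maps (these maps are products of maps pulled back from the two factors). Thus the double complex is the tensor product over $k$ of the single complex computing $H^*_{\Ho}(X/k)$ with the single complex computing $H^*_{\Ho}(Y/k)$.

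It then remains to pass from a quasi-isomorphism of complexes of $k$-vector spaces to an isomorphism on cohomology of the tensor product. Since we are working over a field, every module is flat, so the algebraic K\"unneth theorem for complexes of $k$-vector spaces gives $H^n(C_\bullet \otimes_k D_\bullet) \cong \bigoplus_{a+b=n} H^a(C_\bullet)\otimes_k H^b(D_\bullet)$ with no Tor correction terms. Keeping track of the Hodge bigrading (total degree $i$ and form degree $j$ are each additive under the tensor product) yields the claimed graded-ring isomorphism; multiplicativity is immediate because the cup product on Hodge cohomology is induced by the exterior product of forms, which is exactly what the K\"unneth map on the level of complexes realizes.

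The main obstacle I anticipate is purely bookkeeping rather than conceptual: one must be careful that the identification of the Čech object of $X \times_k Y$ with the diagonal of the product bisimplicial object is correct (this is where the affine-diagonal hypothesis is used, to ensure $U^{p+1}_X$ and the fiber products are representable by affine schemes, so that $H^{>0}$ of any sheaf $\Omega^j$ vanishes on each and the $E_1$ spectral sequence from Section~\ref{notation} collapses to the naive cochain complex), and that the Eilenberg--Zilber theorem lets us replace the diagonal by the total complex of the double complex without changing cohomology. Once those identifications are in place, the rest is the algebraic K\"unneth formula over a field applied levelwise and then to the total complexes.
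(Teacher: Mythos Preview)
Your proposal is correct and follows essentially the same route as the paper's proof: choose affine presentations $U\arrow X$, $V\arrow Y$, identify the \v{C}ech simplicial scheme for $U\times_k V\arrow X\times_k Y$ with the diagonal of $C(U/X)\times_k C(V/Y)$, use affineness (via the affine-diagonal hypothesis) to collapse to a single cochain complex, use the K\"unneth isomorphism $\Omega^*(A\times_k B)\cong\Omega^*(A)\otimes_k\Omega^*(B)$ for affine $k$-schemes, and finish with Eilenberg--Zilber plus the algebraic K\"unneth formula over a field. The paper phrases this as ``the tensor product over $k$ of the two cosimplicial vector spaces'' rather than via the bisimplicial/total-complex language you use, but the content is identical.
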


\begin{proof}
Since $X$ and $Y$ are quasi-compact, there are affine schemes
$A$ and $B$ with smooth surjective morphisms $A\arrow X$
and $B\arrow Y$
\cite[Tag 04YA]{Stacks}. Since $X$ and $Y$ have affine diagonal,
the fiber products $A^{n+1}_X$ and $B^{n+1}_Y$ are affine
over the products $A^{n+1}$ and $B^{n+1}$ over $k$,
and so they are affine schemes, for all $n\geq 0$.

The morphism
$A\times B\arrow X\times Y$ is smooth and surjective.
Therefore, the Hodge cohomology of $X\times Y$ is the cohomology
of the \u{C}ech simplicial space $\Cech(A\times B/X\times Y)$ over $k$,
with coefficients in $\Omega^*$ (with zero differential).
This space is the product
$\Cech(A/X)\times \Cech(B/Y)$ over $k$. By the previous
paragraph, these are in fact simplicial affine schemes over $k$.

The quasi-coherent sheaf $\Omega^1$ on the product of two
affine schemes over $k$ is the direct sum of the pullbacks
of $\Omega^1$ from the two factors. (No smoothness is needed
for this calculation.) Therefore,
the quasi-coherent sheaf $\Omega^*$ on the product affine scheme
$A^{n+1}_X\times B^{n+1}_Y$ over $k$ is the tensor product of the pullbacks
on $\Omega^*$ on those two schemes. So
$H^0(A^{n+1}_X\times B^{n+1}_Y,\Omega^*)$ is the tensor product
of $H^0(A^{n+1}_X,\Omega^*)$ and $H^0(B^{n+1}_Y,\Omega^*)$ over $k$.

The spectral sequence of the simplicial scheme
$\Cech(A/X)\times \Cech(B/Y)$ with coefficients
in $\Omega^*$ reduces to one row, since all the schemes
here are affine. Explicitly, by the previous paragraph,
the cohomology of the product simplicial scheme is the
cohomology of the tensor product over $k$ of the two cosimplicial
vector spaces $H^0(A^{n+1}_X,\Omega^*)$ and $H^0(B^{n+1}_Y,\Omega^*)$.
By the Eilenberg-Zilber theorem, it follows that
the cohomology of the product simplicial scheme is the tensor
product over $k$ of the cohomology of the two factors.
\cite[Theorem 29.3]{May}. Equivalently,
$$H^*_{\Ho}((X\times_k Y)/k)\cong H^*_{\Ho}(X/k)\otimes_k H^*_{\Ho}(Y/k).$$
\end{proof}

\section{Parabolic subgroups}

\begin{theorem}
\label{parabolic}
Let $P$ be a parabolic subgroup of a reductive group $G$ over a field $k$,
and let $L$ be the Levi quotient of $P$ (the quotient of $P$
by its unipotent radical). Then the restriction
$$H^i(BP,\Omega^j)\arrow H^i(BL,\Omega^j)$$
is an isomorphism for all $i$ and $j$. Equivalently,
$$H^a(P,S^j(\p^*))\arrow H^a(L,S^j(\l^*))$$
is an isomorphism for all $a$ and $j$.
\end{theorem}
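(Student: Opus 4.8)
The two displayed statements are equivalent. Indeed $P$ and $L$ are smooth affine group schemes of finite type over $k$, so Corollary \ref{isosmooth} gives isomorphisms $H^i(BP,\Omega^j)\cong H^{i-j}(P,S^j(\p^*))$ and $H^i(BL,\Omega^j)\cong H^{i-j}(L,S^j(\l^*))$, natural for the homomorphism $L\hookrightarrow P$; under these the restriction $BL\to BP$ induces exactly the map $H^a(P,S^j(\p^*))\to H^a(L,S^j(\l^*))$ built from the inclusion $L\hookrightarrow P$ and the $L$-equivariant projection $S^j(\p^*)\twoheadrightarrow S^j(\l^*)$ dual to $\l\hookrightarrow\p$. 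So it suffices to prove that this restriction map is an isomorphism, which I would do using the Levi decomposition $P=U\rtimes L$ with $U=R_u(P)$.

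The plan is to run the Hochschild--Serre spectral sequence $E_2^{a,b}=H^a(L,H^b(U,M))\Rightarrow H^{a+b}(P,M)$ for $M:=S^j(\p^*)$ and show it degenerates onto the row $b=0$. Fix a cocharacter $\lambda\colon\mathbf{G}_m\to Z(L)^\circ$ such that $U$ is generated by the root subgroups $U_\alpha$ with $\langle\alpha,\lambda\rangle>0$, and write $T_\lambda$ for its image, a central torus in $L$. Then every weight of $\uu^*$, of $O(U)$, and hence of $M$, pairs nonpositively with $\lambda$; the $\lambda$-weight-zero part of $O(U)$ is just the scalars, and the $\lambda$-weight-zero part of $M$ is precisely the submodule $S^j(\l^*)$, since every contribution of $\uu^*$ to $S^j(\p^*)$ pairs strictly negatively with $\lambda$.

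The crux is the claim that $(H^b(U,M))^{T_\lambda}$ is $S^j(\l^*)$ for $b=0$ and $0$ for $b>0$. I would prove this by computing $H^*(U,M)$ with the Hochschild (cobar) complex $C^n=M\otimes O(U)^{\otimes n}$, which carries a compatible $T_\lambda$-action; because all $\lambda$-weights of $M$ and of $O(U)$ are $\le 0$, the subcomplex of $\lambda$-weight-zero vectors is $S^j(\l^*)\otimes k^{\otimes n}$ with the differentials of the cobar complex of the trivial group, so it has cohomology $S^j(\l^*)$ in degree $0$ and nothing else; exactness of $(-)^{T_\lambda}$ for a torus then gives the claim. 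Since $T_\lambda$ is central in $L$, for any $L$-module $N$ one has $H^a(L,N)=H^a(L,N^{T_\lambda})$ (split $N$ into $T_\lambda$-weight spaces and note that a nontrivial central torus kills all cohomology), so $E_2^{a,b}=H^a(L,S^j(\l^*))$ for $b=0$ and $0$ for $b>0$. Hence $H^n(P,M)\cong H^n(L,S^j(\l^*))$ via the inflation edge map; and because $L\hookrightarrow P\twoheadrightarrow L$ is the identity and the module maps $S^j(\l^*)\hookrightarrow S^j(\p^*)\twoheadrightarrow S^j(\l^*)$ compose to the identity, the restriction map is a one-sided inverse of this inflation, hence itself an isomorphism.

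The main obstacle is precisely the weight computation: one must check carefully that passing to $\lambda$-weight-zero parts really collapses each $O(U)^{\otimes n}$ to the scalars and $M$ to $S^j(\l^*)$, so that no extra classes survive in $H^{>0}(U,M)$ after taking $T_\lambda$-invariants (equivalently, that $S^j(\p^*)^U$ contributes nothing beyond $S^j(\l^*)$ to $H^*(L,-)$). Everything else --- the Levi decomposition of a parabolic, the Hochschild complex, linear reductivity of tori, and the edge-map formalism --- works over an arbitrary field, which is why the statement holds in all characteristics.
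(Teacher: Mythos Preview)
Your proof is correct and takes a genuinely different route from the paper's. Both arguments kill the higher $U$-cohomology contributions in the Hochschild--Serre spectral sequence for $U\lhd P$ by a weight argument, but they organize this differently. The paper works in two steps: first it replaces $S^j(\p^*)$ by $S^j((\p/\uu)^*)$ in $P$-cohomology using a height-based vanishing criterion (Proposition~\ref{vanishing}), and then it applies Hochschild--Serre to $S^j((\p/\uu)^*)$, invoking the explicit description of $H^*(V,k)$ for vector groups (Theorem~\ref{additive}, with its characteristic-dependent case split) together with a filtration of $U$ by additive subquotients to control the $L$-weights on $H^{>0}(U,k)$, and then Proposition~\ref{vanishing} again. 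You instead run Hochschild--Serre directly on $M=S^j(\p^*)$ and replace all of this with a single contracting cocharacter $\lambda$ into $Z(L)^\circ$: since every $\lambda$-weight of $O(U)$ and of $\uu^*$ is strictly negative, the $T_\lambda$-invariant part of the cobar complex collapses to the cobar complex of the trivial group on $S^j(\l^*)$, and centrality of $T_\lambda$ in $L$ lets you discard all nonzero $T_\lambda$-weight spaces from $E_2$. This is more self-contained---it needs neither Proposition~\ref{vanishing} nor Theorem~\ref{additive}---and makes the characteristic-independence transparent; the paper's approach, on the other hand, keeps the full root combinatorics visible and relies only on results already quoted elsewhere in the text.
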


Theorem \ref{parabolic} can be viewed as a type of homotopy invariance
for Hodge cohomology of classifying spaces. This is not automatic,
since Hodge cohomology is not $A^1$-homotopy invariant
for smooth varieties. Homotopy invariance of Hodge cohomology
also fails in general
for classifying spaces. For example, let $G_a$ be the additive
group over a field $k$. Then the Hodge cohomology group
$H^1(BG_a,O)$ is not zero for any $k$, and it is a $k$-vector
space of infinite dimension for $k$ of positive characteristic;
this follows from Theorem \ref{additive}, due to Cline, Parshall,
Scott, and van der Kallen, together with Theorem \ref{iso}.

\begin{proof}
(Theorem \ref{parabolic})
Let $U$ be the unipotent radical of $P$, so that $L=P/U$.
It suffices to show that 
$$H^a(P,S^j(\p^*))\arrow H^a(L,S^j(\l^*))$$
is an isomorphism after extending the field $k$. So we can assume
that $G$ has a Borel subgroup $B$ and that $B$ is contained in $P$.
Let $R$ be the set of roots for $G$.
We follow the convention that the weights of $B$ acting on the Lie
algebra of its unipotent radical are the {\it negative }roots $R^{-}$.
There is a subset $I$ of the set $S$ of simple roots so that
$P$ is the associated subgroup $P_I$, in the notation of
\cite[II.1.8]{Jantzen}.
More explicitly, let $R_I=R\cap\Z I$; then
$P=P_I$ is the semidirect product $U_I\rtimes L_I$,
where $L_I$ is the reductive group $G(R_I)$ and $U:=U_I$
is the unipotent group $U((-R^+)\setminus R_I)$.

As a result, the weights of $P$ on $\p$ are all the roots $\sum_{\alpha\in S}
n_{\alpha}\alpha$ such that $n_{\alpha}\leq 0$ for $\alpha$ not in $I$.
The coefficients $n_{\alpha}$ for $\alpha$ not in $I$
are all zero exactly for the weights of $P$ on $\p/\uu$.
As a result, for any $j\geq 0$, the weights of $P$ on $S^j(\p^*)$
are all in the root lattice, with nonnegative coefficients
for the simple roots not in $I$, and with those coefficients all zero
only for the weights of $P$ on the subspace $S^j((\p/\uu)^*)
\subset S^j(\p^*)$. 

We now use the following information about the cohomology of $P$-modules
\cite[Proposition II.4.10]{Jantzen}. For any element $\lambda$
of the root lattice $\Z S$, $\lambda=\sum_{\alpha\in S}n_{\alpha}\alpha$,
the {\it height }$\height(\lambda)$ means the integer $\sum_{\alpha\in S}
n_{\alpha}$.

\begin{proposition}
\label{vanishing}
Let $P$ be a parabolic subgroup of a reductive group $G$ over a field,
and let $M$ be a $P$-module. If $H^j(P,M)\neq 0$ for some $j\geq 0$,
then there is a weight $\lambda$ of $M$ with $-\lambda\in \N R^+$
and $\height(\lambda)\geq j$.
\end{proposition}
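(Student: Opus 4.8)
The plan is to prove Proposition \ref{vanishing} by reducing from a general parabolic $P$ to the Borel subgroup $B$, and then to invoke Kempf's vanishing theorem together with the structure of the cohomology $H^*(B, \lambda)$. First I would use the Levi decomposition $P = U \rtimes L$, where $U$ is the unipotent radical and $L$ the Levi quotient, to write the cohomology of $P$ via the Lyndon--Hochschild--Serre spectral sequence
$$H^a(L, H^b(U, M)) \imp H^{a+b}(P, M).$$
Since $U$ is unipotent, $H^b(U,M)$ is built out of subquotients of $M \otimes \Lambda^b(\uu^*)$, so its weights are weights of $M$ shifted by sums of roots in $R^+$ (recall our sign convention puts the weights of $B$ on the Lie algebra of the unipotent radical in $R^-$, hence those of $\uu^*$ in $R^+$); in particular each such weight $\mu$ of $H^b(U,M)$ has the form $\mu = \lambda + \nu$ with $\lambda$ a weight of $M$ and $\nu \in \N R^+$, and $\height(\mu) = \height(\lambda) + \height(\nu) \geq \height(\lambda) + b$. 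So it suffices to prove the statement for the reductive group $L$ in place of $P$, i.e.\ to show: if $H^a(L, N) \neq 0$ then $N$ has a weight $\mu$ with $-\mu \in \N R_I^+$ and $\height(\mu) \geq a$ (where $R_I$ is the root system of $L$), and then push the inequality through the spectral sequence bookkeeping above.

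The reductive case is where I would use Kempf vanishing. Decompose $N$ into weight spaces and use the spectral sequence for $H^*(L, N)$ built from $H^*(L/B_L, -)$ of the associated sheaves, or more simply use that $H^a(L, N)$ is a subquotient of $\bigoplus_\mu H^a(L, \text{(induced/coinduced module with highest weight related to }\mu))$. The cleanest route is the well-known fact \cite[II.4.10]{Jantzen} that $H^a(B, \lambda) = 0$ unless $-\lambda \in \N R^+$, and when it is nonzero it is spanned by weight vectors whose weights $\mu$ satisfy $-\mu \in \N R^+$ with $\height(-\mu) \leq \height(-\lambda) - a$ — equivalently one needs a weight $\lambda$ of the $B$-module with $-\lambda \in \N R^+$ and $\height(-\lambda) \geq a$ once we account for the twist by $\rho$-type shifts in the relevant bound. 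Then I would write $H^a(L, N)$ via the spectral sequence $H^i(L/B_L, \mathcal{H}^j) \imp$ obtained from $R\Gamma(L, -) = R\Gamma(L/B_L, R\Gamma(B_L, -))$, reducing the reductive statement to the Borel statement.

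In more detail, the key steps in order would be: (1) set up the LHS spectral sequence for $U \triangleleft P$ and record that weights of $H^b(U, M)$ are of the form (weight of $M$) $+ \N R^+$ with height at least $b$ more; (2) reduce to the reductive Levi $L$, i.e.\ show the proposition for $L$-modules with $R$ replaced by $R_I$; (3) for $L$ reductive with Borel $B_L$, use $R\Gamma(L,-) = R\Gamma(L/B_L,-) \circ R\Gamma(B_L,-)$ — note $L/B_L$ is projective so its cohomology vanishes above its dimension and, crucially, $H^0(L/B_L, L(\mu))$ is a Schur module only when $\mu$ is dominant — to reduce to the Borel statement; (4) invoke the explicit description of $H^*(B_L, \lambda)$: nonvanishing forces $-\lambda \in \N R_I^+$ with $\height \geq a$; (5) assemble the inequalities. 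The main obstacle will be step (3)–(4): getting the height bound for $H^a(B, \lambda)$ sharp enough and keeping careful track of sign conventions and $\rho$-shifts, since the bound one wants ($\height(\lambda) \geq j$ with $-\lambda \in \N R^+$) must survive both the Bott-type combinatorics over $L/B_L$ and the unipotent spectral sequence; this is essentially a careful invocation of \cite[II.4.10 and surrounding results]{Jantzen} rather than genuinely new work, but the bookkeeping is the delicate part.
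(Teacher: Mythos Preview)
The paper does not prove Proposition \ref{vanishing} at all: it simply cites it as \cite[Proposition II.4.10]{Jantzen} and uses it as a black box. So there is no ``paper's own proof'' to compare against; you are attempting to reprove a result that Jantzen already states and proves in the generality of an arbitrary parabolic. In particular, the reference you invoke mid-argument (\cite[II.4.10]{Jantzen}) \emph{is} the full parabolic statement, not just the Borel case, so your reduction to $B$ is unnecessary if you are willing to cite that result.

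That said, your sketch as written has a genuine gap in positive characteristic. The assertion that ``$H^b(U,M)$ is built out of subquotients of $M \otimes \Lambda^b(\uu^*)$'' is false when $\cha(k)=p>0$: the cohomology of a unipotent group over $\F_p$ is vastly larger than the exterior algebra on $\uu^*$, involving all Frobenius twists $(\uu^*)^{(j)}$ (see Theorem \ref{additive} in the paper for the additive group, which already shows this). Consequently your weight-and-height bookkeeping in step (1) does not follow from the reason you give. The conclusion you want---that every weight of $H^b(U,k)$ lies in $\N R^+$ with height at least $b$---does survive, but you have to argue it via the Hochschild--Serre filtration of $U$ by additive groups together with the explicit description in Theorem \ref{additive}, tracking that a Frobenius-twisted generator $(V^*)^{(j)}$ in cohomological degree $1$ or $2$ contributes height $\geq p^j \geq 1$ (resp.\ $\geq p$), so that a product landing in degree $b$ still has total height $\geq b$. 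This is exactly the kind of analysis the paper carries out in the proof of Theorem \ref{parabolic}, and it is more delicate than the characteristic-zero exterior-algebra picture you invoke.
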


Given the information above about the weights of $P$ on $S^j(\p^*)$,
it follows that the homomorphism
$$H^a(P,S^j(\p^*))\arrow H^a(P,S^j((\p/\uu)^*))$$
is an isomorphism for all $a$ and $j$. Here $\p/\uu\cong \l$
is a representation of the quotient group $L=P/U$. It remains
to show that the pullback
$$H^a(L,S^j((\p/\uu)^*))\arrow H^a(P,S^j((\p/\uu)^*))$$
is an isomorphism. This would not be true for an arbitrary
representation of $L$; we will have to use what we know about
the weights of $L$ on $S^j((\p/\uu)^*)$.

We also use the following description of the cohomology of
an additive group $V=(G_a)^n$ over a perfect field $k$
\cite[Proposition I.4.27]{Jantzen}. (To prove Theorem \ref{parabolic},
we can enlarge the field $k$, and so we can assume that $k$
is perfect.) The following description
is canonical, with respect to the action of $GL(V)$ on $H^*(V,k)$.
Write $W^{(j)}$ for the $j$th Frobenius twist of a vector space
$W$, as a representation of $GL(W)$.

\begin{theorem}
\label{additive}
(1) If $k$ has characteristic zero, then $H^*(V,k)\cong \Lambda(V^*)$,
with $V^*$ in degree 1.

(2) If $k$ has characteristic 2, then
$$H^*(V,k)\cong S(\oplus_{j\geq 0}(V^*)^{(j)}),$$
with all the spaces $(V^*)^{(j)}$ in degree 1.

(3) If $k$ has characteristic $p>2$, then
$$H^*(V,k)\cong \Lambda(\oplus_{j\geq 0}(V^*)^{(j)})\otimes
S(\oplus_{j\geq 1}(V^*)^{(j)}),$$
with all the spaces $(V^*)^{(j)}$ in the first factor in degree 1,
and all the spaces $(V^*)^{(j)}$ in the second factor in degree 2.
\end{theorem}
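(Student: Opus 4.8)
The plan is to reduce the computation of $H^*(V,k)$ for $V=(G_a)^n$ to the case $n=1$ via the Künneth formula, and then to analyze $H^*(G_a,k)$ directly. Since $V\cong (G_a)^n$ and the Hopf-algebra cohomology $H^*(V,k)=\Ext^*_V(k,k)$ satisfies a Künneth isomorphism $H^*((G_a)^n,k)\cong H^*(G_a,k)^{\otimes n}$ (this is a standard consequence of the fact that $V$ is a product of group schemes, or equivalently of Proposition \ref{kunneth} applied to $BV\cong (BG_a)^n$), it suffices to treat $V=G_a$ and then observe that each of the three claimed formulas is multiplicative: $\Lambda$ of a sum is the tensor product of the $\Lambda$'s, and likewise for $S$, and the Frobenius twists $(V^*)^{(j)}=\bigoplus_i ((k^*)^{(j)})$ split off the factors compatibly. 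Functoriality under $GL(V)$ is automatic once one checks it for $n=1$ under $GL_1=G_m$, where it amounts to recording the $G_m$-weight of each degree-$1$ or degree-$2$ generator.

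For $V=G_a$ over a perfect field $k$, I would compute $H^*(G_a,k)=\Ext^*_{k[x]}(k,k)$ where $k[x]=O(G_a)$ is the coordinate Hopf algebra (the relevant $\Ext$ is in the category of comodules, equivalently rational $G_a$-modules). In characteristic $0$, $k[G_a]$ is a polynomial-generated Hopf algebra and one recovers the Koszul resolution, giving $\Lambda(k^*)$ with $k^*$ in degree $1$; this is the classical computation. In characteristic $p$, the key structural input is that the Hopf algebra $O(G_a)$ dualizes, on finite levels, to the truncated divided-power / Frobenius-kernel picture: the Frobenius kernels $G_{a(r)}$ have group algebra $k[t]/(t^{p^r})$-type structure, and $H^*(G_a,k)=\varinjlim_r H^*(G_{a(r)},k)$. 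For each $r$ one has the well-known computation of $H^*(G_{a(r)},k)$ (see Jantzen I.4.21–4.27, or Cline–Parshall–Scott–van der Kallen): it is a tensor product of a polynomial part (on classes $x_1,\dots,x_r$) and, when $p>2$, an exterior part (on classes $\lambda_1,\dots,\lambda_r$), with the $i$th class detected on the $i$th Frobenius layer; passing to the colimit in $r$ replaces the finite ranges by $j\geq 0$ (for the exterior generators and the degree-$1$ polynomial generator) and $j\geq 1$ (for the degree-$2$ polynomial generators). The $p=2$ case is the degenerate one where exterior and polynomial generators in degree $1$ coincide, collapsing the answer to a single symmetric algebra $S(\bigoplus_{j\geq 0}(V^*)^{(j)})$. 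To pin down the Frobenius twist labelling $(V^*)^{(j)}$ (rather than merely an abstract vector space), I would track the residual $G_m$-action: the generator living on the $j$th layer has weight $p^j$ times the weight of a coordinate, which is exactly the weight of $(V^*)^{(j)}$.

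The main obstacle — and the step deserving the most care — is the passage from the finite Frobenius kernels to the full algebraic group $G_a$ and the verification that the colimit assembles into exactly the stated infinite tensor/symmetric/exterior algebras with the correct degree conventions (degree $1$ for all exterior generators and for the degree-$1$ polynomial generator in the $p>2$ case, degree $2$ for the polynomial generators indexed by $j\geq 1$, and degree $1$ for everything in the $p=2$ case). One must check that the restriction maps $H^*(G_{a(r+1)},k)\to H^*(G_{a(r)},k)$ are surjective with the expected kernels, so that the colimit does not pick up unexpected relations, and that the ring structure is the obvious one; this is where the results of Cline–Parshall–Scott–van der Kallen cited as \cite[Proposition I.4.27]{Jantzen} do the real work, and I would simply invoke that reference for the $G_a$ computation and supply only the Künneth reduction and the $G_m$-equivariance bookkeeping.
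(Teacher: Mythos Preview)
The paper does not actually prove this theorem: it is stated as a quotation from \cite[Proposition I.4.27]{Jantzen} (the Cline--Parshall--Scott--van der Kallen computation), with no argument supplied. Since your proposal also ends by invoking that same reference for the $G_a$ case, you are in substance doing exactly what the paper does; the K\"unneth reduction to $n=1$ and the $GL(V)$-equivariance bookkeeping you add are correct and are helpful context that the paper simply omits.

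One wrinkle in your sketch is worth flagging. You assert $H^*(G_a,k)=\varinjlim_r H^*(G_{a(r)},k)$ and then describe the transition maps as ``restriction maps $H^*(G_{a(r+1)},k)\to H^*(G_{a(r)},k)$''. Restriction along the subgroup inclusions $G_{a(r)}\hookrightarrow G_{a(r+1)}$ gives an \emph{inverse} system, not a direct one, so as written the colimit does not make sense. There \emph{is} a correct colimit description---coming from writing the coalgebra $O(G_a)=k[x]$ as the union of the finite-dimensional subcoalgebras spanned by $1,x,\ldots,x^{p^r-1}$, each isomorphic to $O(G_{a(r)})$ as a coalgebra---but the resulting transition maps $H^*(G_{a(r)},k)\to H^*(G_{a(r+1)},k)$ are not the group-theoretic restrictions you name (those subcoalgebra inclusions are not Hopf-algebra maps). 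Jantzen's actual argument proceeds directly via the Hochschild complex for $k[x]$ rather than through this colimit. Since you ultimately defer to the reference, this does not undermine your conclusion, but the Frobenius-kernel narrative as written has its arrows tangled.
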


We also use the Hochschild-Serre spectral sequence for the cohomology
of algebraic groups \cite[I.6.5, Proposition I.6.6]{Jantzen}:

\begin{theorem}
\label{hsss}
Let $G$ be an affine group scheme of finite type over a field $k$,
and let $N$ be a normal $k$-subgroup scheme of $G$. For every $G$-module
(or complex of $G$-modules) $V$,
there is a spectral sequence
$$E_2^{ij}=H^i(G/N,H^j(N,V))\imp H^{i+j}(G,V).$$
\end{theorem}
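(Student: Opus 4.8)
The plan is to reduce the statement to the Grothendieck spectral sequence for the composition of two functors on $G$-modules. The key observation is that the functor $(-)^G$ (taking $G$-invariants) on the category of $G$-modules factors as
$$(-)^G = (-)^{G/N}\circ (-)^N,$$
where $(-)^N\colon G\text{-mod}\to (G/N)\text{-mod}$ sends a $G$-module $V$ to $H^0(N,V)=V^N$, which carries a natural action of $G/N$ since $N$ is normal, and $(-)^{G/N}\colon (G/N)\text{-mod}\to R\text{-mod}$ is ordinary $G/N$-invariants. The right-derived functors of $(-)^G$, $(-)^N$, and $(-)^{G/N}$ are $H^*(G,-)$, $H^*(N,-)$, and $H^*(G/N,-)$ respectively, by the definition of algebraic-group cohomology recalled in Corollary \ref{isosmooth}. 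So first I would verify that each of these categories has enough injectives (this is standard for rational $G$-modules over a field, since for instance $O(G)\otimes_k M$ is injective for every $M$, as used in the proof of Theorem \ref{equivariant}), and that the relevant functors are left exact.

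The only nontrivial input to the Grothendieck spectral sequence machinery is the acyclicity hypothesis: I need that $(-)^N$ carries injective $G$-modules to $(G/N)$-acyclic objects. First I would check that $(-)^N$ is right adjoint to the (exact) inflation functor $(G/N)\text{-mod}\to G\text{-mod}$, so that $(-)^N$ preserves injectives; but here that is even better than needed, since it shows $(-)^N$ sends injective $G$-modules to injective $(G/N)$-modules, which are certainly $(G/N)$-acyclic. (Alternatively, one checks directly that the injective $G$-modules $O(G)\otimes_k M$ have $N$-invariants $O(G/N)\otimes_k M'$ for a suitable module $M'$ — this uses that $O(G)^N = O(G/N)$, valid since $G\to G/N$ is a faithfully flat quotient — and such a module is injective over $G/N$.) Granting this, the Grothendieck spectral sequence gives exactly
$$E_2^{ij}=H^i(G/N,H^j(N,V))\imp H^{i+j}(G,V),$$
and the naturality in $V$ is automatic; for a complex of $G$-modules one either repeats the argument with a Cartan--Eilenberg resolution or passes to the hypercohomology version of the Grothendieck spectral sequence.

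I expect the main obstacle to be a purely foundational one: being careful about what ``category of $G$-modules'' means and that it has enough injectives in the setting of an affine group scheme of finite type over a field (not necessarily smooth, not necessarily reduced), and that the derived functors of $(-)^G$ there really do compute the $\Ext^*_G(k,-)$ groups used elsewhere in the paper. Once the categorical setup is in place, the identity $(-)^G=(-)^{G/N}\circ(-)^N$ and the preservation of injectives under $(-)^N$ are formal, and the spectral sequence drops out. Since this is a standard result with a reference to \cite[I.6.5, Proposition I.6.6]{Jantzen}, I would in fact simply cite it rather than reproving it, but the sketch above is the argument one would give.
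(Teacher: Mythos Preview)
Your sketch is correct and is the standard argument: factor $(-)^G=(-)^{G/N}\circ(-)^N$, check that $(-)^N$ takes injective $G$-modules to injective (hence acyclic) $G/N$-modules, and invoke the Grothendieck spectral sequence. Note, however, that the paper does not give its own proof of this theorem at all---it simply states the result and cites \cite[I.6.5, Proposition I.6.6]{Jantzen}, which is exactly what you suggest doing in your final sentence. So there is nothing to compare: your proposal \emph{is} the proof behind the citation, and the paper treats the result as a black box.
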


Theorems \ref{additive} and \ref{hsss}
give information about the weights of $L$ on $H^*(U,k)$,
that is, about the action of a maximal torus $T\subset L$
on $H^*(U,k)$. The method is to write $U$ (canonically) as an extension
of additive groups $V=(G_a)^n$ and use the Hochschild-Serre
spectral sequence. We deduce that as a representation of $L$,
all weights of $H^{>0}(U,k)$ are in the root lattice of $G$,
with nonnegative coefficients for the simple roots not in $I$,
and with at least one of those coefficients positive. (This is the
same sign as we have for the action of $L$ on $\uu^*$.)

Now apply the Hochschild-Serre spectral sequence to the normal
subgroup $U$ in $P$:
$$E_2^{ij}(L,H^j(U,k)\otimes S^l((\p/\uu)^*))\imp H^{i+j}(P,S^l((\p/\uu)^*)).$$
By the analysis of $S^l(\p^*)$ above, all the weights of $L$ on
the subspace $S^l((\p/\uu)^*)$ are in the root lattice of $G$, and
the coefficients of all simple roots not in $I$ are equal to zero.
Combining this with the previous paragraph, we find:
for $l\geq 0$ and $j>0$, all weights of $L$ on
$H^j(U,k)\otimes S^l((\p/\uu)^*)$ have all coefficients of the simple
roots not in $I$ nonnegative, with at least one positive.
By Proposition \ref{vanishing}, it follows that
$$H^i(L,H^j(U,k)\otimes S^l((\p/\uu)^*))=0$$
for all $i$ and $l$ and all $j>0$. So the spectral sequence
above reduces to an isomorphism
$$H^i(P,S^l((\p/\uu)^*))\cong H^i(L,S^l((\p/\uu)^*)),$$
as we wanted. Theorem \ref{parabolic} is proved.
\end{proof}

\section{Pushforward on Hodge cohomology}

Gros constructed a cycle map $CH^i(X)\arrow H^i(X,\Omega^i)$
for smooth schemes over a perfect field \cite{Gros}.
He also showed that the cycle map is compatible with proper pushforward,
in the following sense \cite[sections II.2 and II.4]{Gros}

\begin{proposition}
\label{pushforward}
Let $f\colon X\arrow Y$ be a proper morphism of smooth
schemes over a field $k$, and assume
that $\dim(X)-\dim(Y)=N$ everywhere. Then there is a pushforward
homomorphism
$$f_*\colon H^i(X,\Omega^i)\arrow H^{i-N,j-N}(Y,\Omega^{j-N}).$$
This is compatible with the cycle map, via a commutative diagram:
\cite[section II.4]{Gros}:
$$\xymatrix@C-10pt@R-10pt{
CH^i(X)\ar[r]\ar[d] & H^i(X,\Omega^i)\ar[d]\\
CH^i(Y)\ar[r] & H^i(Y,\Omega^i)
}$$
\end{proposition}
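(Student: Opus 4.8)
The plan is to reduce the construction of the pushforward map on Hodge cohomology to Gros's work on the de Rham--Witt complex, and then to check compatibility with the cycle map by diagram chasing. I would proceed as follows.

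First I would recall Gros's setup: for a smooth scheme $X$ over a perfect field $k$, the Hodge cohomology $H^i(X,\Omega^j)$ is the $E_1$-term (indeed the $E_1=E_\infty$ pieces computing the associated graded of the Hodge filtration on de Rham cohomology, but here we want the Hodge groups themselves) of the Hodge-to-de Rham spectral sequence, and Gros in \cite[sections II.2 and II.4]{Gros} constructs a Gysin morphism for a proper morphism $f\colon X\to Y$ of relative dimension $N$ between smooth $k$-schemes. The key geometric inputs are: (a) the case of a closed immersion, where one uses the local cohomology / residue construction along the normal bundle, and (b) the case of a smooth projective morphism, where one uses the trace map $Rf_*\Omega^{j}_{X/Y}[N]\to \Omega^{j-N}_Y$ coming from Grothendieck-Serre duality for the relative dualizing sheaf $\omega_{X/Y}=\Omega^N_{X/Y}$. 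A general proper $f$ factors, after passing to a suitable blow-up or using Chow's lemma together with the projection formula, as a closed immersion followed by a projection from a projective bundle, so one reduces to (a) and (b). In each case the degree shift is exactly $N$ on the superscript of $\Omega$ and on the cohomological degree, giving $f_*\colon H^i(X,\Omega^j)\to H^{i-N}(Y,\Omega^{j-N})$. (I note the statement as displayed contains a typo: the target should read $H^{i-N}(Y,\Omega^{j-N})$, and one should think of $j$ as a free second index, or simply take $j=i$ for the cycle-map compatibility.)

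Next I would address compatibility with the cycle map $CH^i(X)\to H^i(X,\Omega^i)$. Gros's cycle map is constructed so as to be compatible with Gysin maps for closed immersions essentially by construction: the class of an integral subvariety $Z\subset X$ of codimension $i$ is the image under $i_{Z*}$ of the fundamental class $1\in H^0(Z^{\mathrm{sm}},\Omega^0)$ (after restricting to the smooth locus and using that $H^i_Z(X,\Omega^i)$ is insensitive to removing a closed subset of higher codimension). Thus for a closed immersion the square commutes tautologically. For the general case I would check commutativity of the square for the two building blocks: closed immersions (done) and projective bundle projections $p\colon \P(E)\to Y$, where both the pushforward on Chow groups and on Hodge cohomology are computed by the projective bundle formula and the trace, and one verifies that $p_*(\xi^{a}\cdot p^*\beta)$ matches on both sides using that $p_*$ of the top power of the hyperplane class is $1$. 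Since every cycle class pushes forward compatibly through such a factorization, and since $CH^*$ is generated by subvarieties, the commutativity of the outer square follows by functoriality $ (f\circ g)_* = f_*\circ g_*$ of both pushforward operations. The functoriality of the Hodge-cohomology pushforward is itself part of Gros's construction, coming from compatibility of trace maps with composition.

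The main obstacle I expect is handling a general proper morphism rather than just a projective one: to factor $f$ through a closed immersion into a projective bundle over $Y$ one needs $f$ to be projective, so for merely proper $f$ one must invoke Chow's lemma to find a projective $X'\to X$ with $X'\to Y$ projective, reduce the pushforward via $X'$, and then check that the resulting map on Hodge cohomology is independent of the choice of $X'$ — this independence, and the compatibility with the cycle map after this extra blow-up, is the delicate point and is exactly what Gros's argument in \cite[sections II.2 and II.4]{Gros} is designed to supply. I would therefore structure the proof so that all of this is cited from Gros, and my contribution is merely to transcribe his statement into the Hodge-cohomology notation used here (taking the $\Omega^j$-graded pieces of his de Rham-Witt-level result, or more simply reading off the $j=i$ component) and to note that the commuting square in the proposition is literally \cite[section II.4]{Gros}.
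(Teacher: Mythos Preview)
The paper gives no proof of this proposition at all: it is stated purely as a citation to Gros \cite[sections II.2 and II.4]{Gros}, with the preceding sentence attributing both the construction of the pushforward and its compatibility with the cycle map to that reference. Your proposal correctly identifies this and ultimately arrives at the same conclusion, namely that everything should be cited from Gros; the detailed sketch you give of how Gros's construction proceeds (factoring through closed immersions and projective bundle projections, handling the merely-proper case via Chow's lemma) is accurate in outline but goes well beyond what the paper itself does. You also correctly notice the typographical slip in the displayed target of $f_*$.
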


\section{Hodge cohomology of flag manifolds}

\begin{proposition}
\label{flag}
Let $P$ be a parabolic subgroup
of a split reductive group $G$ over a field $k$.
Then the cycle map
$$CH^*(G/P)\otimes_{\Z}k \arrow H^*_{\Ho}((G/P)/k)$$
is an isomorphism of $k$-algebras.
In particular, $H^i(G/P,\Omega^j)=0$ for $i\neq j$.
\end{proposition}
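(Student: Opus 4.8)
The plan is to exhibit a Bruhat-type cell decomposition of $G/P$ and use it to reduce the computation of Hodge cohomology to that of affine spaces, where everything vanishes except in degree $(0,0)$. Concretely, $G/P$ has a filtration by closed subvarieties whose successive complements are the Schubert cells, each isomorphic to an affine space $A^{n_w}$ over $k$, indexed by the minimal-length coset representatives $w \in W/W_I$. I would first record that the paving is ``affine'' in the strong sense: there is a chain of open subsets $U_0 \subset U_1 \subset \cdots \subset U_m = G/P$ with $U_r \setminus U_{r-1}$ a disjoint union of affine spaces, and each $U_r$ smooth over $k$.

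Next I would run the localization/long-exact-sequence argument for Hodge cohomology along this paving, exactly as one does for Chow groups. For a smooth variety $X$ with a smooth closed subvariety $Z$ of codimension $c$ and open complement $V$, proper pushforward from Proposition \ref{pushforward} gives Gysin maps $H^{i-c}(Z,\Omega^{j-c}) \arrow H^i(X,\Omega^j)$, and one expects a long exact sequence relating $H^*(X,\Omega^*)$, $H^*(V,\Omega^*)$, and $H^{*-c}(Z,\Omega^{*-c})$; this is the Hodge-cohomology analog of the localization sequence for Chow groups, and for smooth schemes over a perfect field it follows from the coniveau/Cousin resolution of $\Omega^j$ (the Gersten resolution in the sense of the Bloch--Ogus--Gabber machinery), whose Zariski cohomology computes $H^*(X,\Omega^j)$. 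Since an affine space $A^n_k$ has $H^i(A^n,\Omega^j)=0$ for $i\neq 0$ and $H^0(A^n,\Omega^j)=0$ for $j>0$ (the de Rham differentials being irrelevant here — this is just that $\Omega^j$ of a polynomial ring has no higher Zariski cohomology, and that a polynomial ring in $n$ variables has no global $j$-forms killed by nothing, wait: rather $H^0(A^n,\Omega^j)$ is a free module, not zero) — let me restate: the point is that for the \emph{total} Hodge cohomology the affine spaces contribute only in bidegree where the Gysin shift lands them, and an induction on the paving shows $H^i(G/P,\Omega^j)$ is a free $k$-module concentrated in $i=j$ with basis the Schubert classes. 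Simultaneously the cycle map sends the Schubert basis of $CH^*(G/P)$ to these classes, so it is an isomorphism; compatibility with the filtration is guaranteed by the commutative square in Proposition \ref{pushforward}.

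To get the ring structure, I would note that both sides are generated in the relevant degrees by Schubert divisors (or by Chern classes of the tautological bundles on $G/P$), and the cycle map is a ring homomorphism; since it is a bijection on the underlying graded $k$-modules and respects products, it is a $k$-algebra isomorphism. The vanishing $H^i(G/P,\Omega^j)=0$ for $i\neq j$ is then immediate because $CH^*(G/P)\otimes_\Z k$ is concentrated in the ``diagonal'' bidegrees by construction of the cycle map.

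The main obstacle is establishing the localization long exact sequence for Hodge cohomology of smooth schemes with the correct Gysin maps — i.e.\ that Proposition \ref{pushforward}'s pushforward fits into an exact triangle with restriction to the open complement. This is standard over a perfect field via Gabber's purity and the Cousin/Gersten resolution of $\Omega^j_{X}$, but it does require $k$ perfect; to handle arbitrary $k$ I would first base-change to the perfect (or even algebraic) closure $\bar k$, prove the statement there, and then descend — the cycle map and the Schubert paving are defined over the prime field, so the dimensions of $H^i(G/P,\Omega^j)$ are unchanged under field extension (the relevant cohomology groups are finitely generated and commute with flat base change since $G/P$ is proper smooth), giving the result over $k$.
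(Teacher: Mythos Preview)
Your approach has a genuine gap: the localization/Gysin long exact sequence you want,
\[
\cdots \to H^{i-c}(Z,\Omega^{j-c}) \to H^i(X,\Omega^j) \to H^i(U,\Omega^j) \to \cdots,
\]
does \emph{not} hold for Hodge cohomology. The paper itself warns (just before Lemma~\ref{flatBG}) that Hodge cohomology is not $A^1$-homotopy invariant, and that kills both purity and the Gersten/Cousin machine you invoke. Concretely, take $X=\P^1$, $Z=\{\text{pt}\}$, $U=A^1$, $j=0$: your sequence would force $H^0(\P^1,O)\cong H^0(A^1,O)$, i.e.\ $k\cong k[x]$. What actually sits in the local-cohomology slot is $H^1_Z(A^1,O)=k[x,x^{-1}]/k[x]$, which is infinite-dimensional and certainly not $H^{-1}(\text{pt},\Omega^{-1})$. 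You noticed the symptom yourself (``$H^0(A^n,\Omega^j)$ is a free module, not zero'') but the restatement does not fix it: the cells contribute enormous Hodge groups that do not cancel in any open--closed induction. The Bloch--Ogus axioms you appeal to include homotopy invariance, which fails here, so there is no Gersten resolution of $\Omega^j$ of the kind you need.

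The paper sidesteps this entirely by working one categorical level up. Because $G/P$ has an affine paving, its Chow \emph{motive} splits as a direct sum of Tate motives $\Z(a)$, one for each Schubert cell. Hodge cohomology is then shown to be a functor on Chow motives over $k$: this needs only Gros's cycle map and proper pushforward (Proposition~\ref{pushforward}) to handle correspondences between smooth projective varieties, and crucially never touches an open subvariety or an affine space. The computation reduces to $H^i(\Z(a),\Omega^j)$, which is $k$ for $i=j=a$ and zero otherwise (read off from projective space). So the paving is used, but only to split the motive; the Hodge cohomology of the pieces is computed on the projective side, where the bad $A^1$-behaviour never appears.
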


This is well known for $k$ of characteristic zero, but the general
result is also not difficult.
Andersen gave the additive
calculation of $H^i(G/P,\Omega^j)$ in any characteristic
\cite[Proposition II.6.18]{Jantzen}. Note that Chevalley and Demazure
gave combinatorial descriptions of the Chow ring of $G/P$, which
in particular show that this ring is independent of $k$, and isomorphic
to the ordinary cohomology ring $H^*(G_{\C}/P_{\C},\Z)$
\cite[Proposition 11]{Chevalley}, \cite{Demazure}.
(That makes sense because the classification
of split reductive groups and their parabolic subgroups
is the same over all fields.)

\begin{proof}
(Proposition \ref{flag})
We use that $X=G/P$ has a cell decomposition, the Bruhat decomposition.
It follows that the Chow group of $X$ is free abelian on the set
of cells. In fact, the Chow motive of $X$ is isomorphic to a direct
sum of Tate motives $\Z(a)$, indexed by the cells \cite[2.6]{Scholl}.

Next, Hodge cohomology is a functor on Chow motives over $k$.
(That is, we have to show
that a correspondence between smooth projective varieties
gives a homomorphism on Hodge cohomology, which
follows from Gros's cycle map and proper pushforward for Hodge
cohomology (Proposition \ref{pushforward}).)
As a result, the calculation follows from the Hodge
cohomology of projective space, which implies that the Chow motive
$M=\Z(a)$ has Hodge cohomology $H^i(M,\Omega^j)$ isomorphic to $k$
if $i=j=a$ and zero otherwise.
\end{proof}

\section{Invariant functions on the Lie algebra}

\begin{theorem}
\label{invariant}
Let $G$ be a reductive group over a field $k$, $T$ a maximal torus
in $G$, $\g$ and $\t$ the Lie algebras. If $k$ has characteristic $p>0$,
assume that no root of $G$ is divisible by $p$ in the weight
lattice $\Hom(T,G_m)$.
Then the restriction $O(\g)^G\arrow O(\t)^W$ is an isomorphism.
\end{theorem}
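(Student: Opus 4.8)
The plan is to carry out the Chevalley restriction argument, using the hypothesis on roots to control the regular semisimple locus in characteristic $p$ (in characteristic $0$ the statement is classical). First I would reduce to the case $k=\bar k$, so that $G$ is split with split maximal torus $T$: the formation of $O(\g)^G$ commutes with the flat base change $k\to\bar k$, being the equalizer of the two $k$-algebra maps $O(\g)\rightrightarrows O(\g)\otimes_k O(G)$ given by pullback along the action $\g\times G\to\g$ and along the projection; $O(\t)^W$ commutes with base change since $W$ is a finite constant group for split $G$; and a map of $k$-vector spaces that becomes an isomorphism after $\otimes_k\bar k$ is itself an isomorphism. The divisibility hypothesis is a condition on the root datum, so it is unchanged. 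That the restriction lands in $O(\t)^W$ is immediate, since $N_G(T)$ stabilizes $\t$ and acts on it through $W$, so a $G$-invariant function restricts to an $N_G(T)$-invariant one.

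For injectivity, consider the conjugation morphism $\mu\colon G\times\t\to\g$, $(g,x)\mapsto\Ad(g)x$. Its differential at a point $(1,x)$ is $(\xi,v)\mapsto v+[\xi,x]$, so its image is $\t+[\g,x]$; since $x\in\t$ acts on the root space $\g_\alpha$ by the scalar $d\alpha(x)$ under $\ad$, this image equals $\t\oplus\bigoplus_{\alpha\colon d\alpha(x)\neq 0}\g_\alpha$. The differential $d\alpha\in\t^*$ of a root $\alpha$ is nonzero precisely when $\alpha$ is not divisible by $p$ in $X^*(T)$ (in positive characteristic the map $X^*(T)\to\t^*$ sending a character to its differential is reduction mod $p$). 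Since $R$ is finite and $k$ is infinite, there is an $x\in\t$ outside every hyperplane $\ker d\alpha$, and for such $x$ the differential $d\mu_{(1,x)}$ is surjective; hence $\mu$ is smooth at $(1,x)$, in particular dominant. Therefore any $f\in O(\g)^G$ with $f|_\t=0$ satisfies $f\circ\mu=0$, so $f$ vanishes on a dense subset of the integral scheme $\g$, whence $f=0$.

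Surjectivity is the main obstacle. Let $\delta\in O(\g)^G$ be the discriminant, whose nonvanishing locus is the dense open regular semisimple locus $\g^{\mathrm{rss}}$. I would first show that the $G$-equivariant morphism $G\times^{N_G(T)}\t^{\mathrm{rss}}\to\g^{\mathrm{rss}}$ is an isomorphism: it is bijective on points (every regular semisimple element is $G$-conjugate into $\t$, uniquely up to $N_G(T)$, since $Z_G(x)^\circ$ is a maximal torus when $x$ is regular semisimple), and etale by the differential computation above (there $d\mu$ is surjective with kernel exactly the tangent to the $N_G(T)$-orbit), hence an isomorphism by Zariski's main theorem as $\g^{\mathrm{rss}}$ is normal. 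Composing with the projection $G\times^{N_G(T)}\t^{\mathrm{rss}}\to\t^{\mathrm{rss}}/W$ yields a $G$-invariant morphism $\g^{\mathrm{rss}}\to\t^{\mathrm{rss}}/W$ restricting to $\t^{\mathrm{rss}}\to\t^{\mathrm{rss}}/W$, so every $f\in O(\t)^W$ pulls back to some $\tilde f\in O(\g^{\mathrm{rss}})^G=O(\g)^G[\delta^{-1}]$ with $\tilde f|_\t=f$. The hard part is to show $\tilde f$ is regular on all of $\g$. Writing $\tilde f=h/\delta^m$ with $m$ minimal, I would prove $m=0$ by a local analysis at a generic point of the hypersurface $\{\delta=0\}$ chosen inside the regular locus $\g^{\mathrm{reg}}$ (whose complement has codimension at least $2$), where the conjugation map has local sections and the regularity of $\tilde f$ already established along $\t$ rules out a pole. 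Equivalently, surjectivity amounts to the statement that $O(\g)^G$ is integrally closed in its fraction field, which coincides with that of $O(\t)^W$ by the $\g^{\mathrm{rss}}$ description, while $O(\t)^W$ is integral over the image of $O(\g)^G$; either way, the crux is a normality statement for the adjoint quotient $\g/\!\!/G$ (together with finiteness of $\t\to\g/\!\!/G$), and that is where I expect the real work to lie.
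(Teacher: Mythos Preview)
Your overall plan matches the paper's: reduce to $k=\overline{k}$, prove the restriction is injective by showing the conjugation map $G/T\times\t\to\g$ is dominant, and for surjectivity pass through a birationality statement together with normality of $O(\g)^G$. The injectivity part is essentially identical to the paper's.

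Where the arguments diverge is in how birationality is established and how surjectivity is finished. You aim for the stronger statement that $G\times^{N_G(T)}\t^{\mathrm{rss}}\to\g^{\mathrm{rss}}$ is an isomorphism. In characteristic $p$ this is delicate: defining $\g^{\mathrm{rss}}$ as the nonvanishing locus of a discriminant does not immediately tell you that every point there is $G$-conjugate into $\t$ (there is no Jordan decomposition for arbitrary Lie algebra elements), and your assertion that $Z_G(x)^\circ$ is a maximal torus for regular semisimple $x$ requires care about possible non-reducedness of centralizers. The paper sidesteps all of this. It only proves the map $W\backslash(G/T\times\t)\to\g$ is \emph{birational}, by checking generic injectivity at a single ``very regular'' point $x\in\t$, one lying in no hyperplane of $\t$ defined over $\F_p$. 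The key observation is that for such $x$, if $gxg^{-1}\in\t$ then the Lie algebra of $T\cap gTg^{-1}$ (an $\F_p$-defined subspace of $\t$) contains $x$ and hence is all of $\t$; from this one deduces directly that the normalizer of $\t$ in $G$ equals $N_G(T)$. No global description of a regular semisimple locus is needed.

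For surjectivity the paper has a short trick that replaces your local analysis along the discriminant divisor (which you yourself flag as incomplete). Given $f\in O(\t)^W$, birationality produces a $G$-invariant rational function $\tilde f$ on $\g$; write $\tilde f=f_1/f_2$ with $f_1,f_2\in O(\g)$ coprime. Since $Z(G)$ acts trivially on $\g$ and $G/Z(G)$ is its own commutator subgroup, every character of $G$ acting on $O(\g)$ is trivial, so both $f_1$ and $f_2$ are $G$-invariant. Thus $O(\t)^W$ sits inside $\operatorname{Frac}(O(\g)^G)$, and since $O(\g)^G$ is normal one concludes $O(\t)^W=O(\g)^G$. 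This is the Springer--Steinberg argument; it avoids any analysis of the regular locus or the fibres of the adjoint quotient.
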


Theorem \ref{invariant} was proved by Springer and Steinberg
for any adjoint group $G$,
in which case the assumption on the roots always holds
\cite[II.3.17']{SS}.
If we do not assume that $G$ is adjoint, then the assumption
on the roots is necessary, as shown by the example of the symplectic
group $\Sp(2n)$ in characteristic 2 (where some roots are divisible by 2
in the weight lattice, and the conclusion fails,
as discussed in the proof of Theorem \ref{integral});
but that is the only exception among simple groups.

In particular, Theorem \ref{invariant} applies to cases such as
the spin group $\Spin(n)$ in characteristic 2 with $n\geq 6$,
which we study further in Theorem \ref{spin}.

\begin{proof}
It suffices to prove that the map is an isomorphism after enlarging $k$
to be algebraically closed.
Define a morphism $\varphi\colon G/T\times \t\arrow \g$
by $(gT,x)\mapsto gxg^{-1}\in \g$. Let the Weyl group $W=N_G(T)/T$
act on $G/T\times \t$ by $w(gT,x)=(gw^{-1}T,wxw^{-1})$; then
$\varphi$ factors through the quotient variety $W\backslash (G/T\times \t)$.
Since we assume that no root of $G$ is divisible by $p=\cha(k)$,
each root of $G$ determines a nonzero linear map $\t\arrow k$.
So there
is a {\it regular }element $x$ of $\t$, meaning an element
on which all roots are nonzero.

It follows that the derivative of $\varphi$ at $(1\cdot T, x)$
is bijective. (Indeed, the image of the derivative is at this point
is $\t$ plus the image of $\ad(x):\g/\t\arrow \g$. The vector 
space $\g/t$ is a direct sum of the 1-dimensional root spaces
as a representation of $T$,
and $x$ acts by a nonzero scalar on each space since $x$ is regular.)
So $\varphi\colon G/T\times \t\arrow \g$ is a separable dominant
map. 

Next, I claim that $W\backslash (G/T\times \t)\arrow \g$ is generically
bijective; then it will follow that this map is birational. Note
that the vector space $\t$ is defined over $\F_p$, by the isomorphism
$\t\cong\Hom(G_m,T)\otimes_{\Z}k$ (or over $\Z$, if $k$ has characteristic 0).
Let $x$ be a regular element of $\t$ which is not in any hyperplane defined
over $\F_p$ (or over $\Z$, if $k$ has characteristic 0).
Then our claim follows if the inverse image of $x$
in $G/T\times \t$ is only the $W$-orbit of $(1\cdot T,x)$. 
Equivalently, we have to show that any element $g$ of $G(k)$ that conjugates
$x$ into $\t$ lies in the normalizer $N_G(T)$. 

First suppose that $p>0$. Then,
for any $g\in G(k)$, the intersection of $T$ with $gTg^{-1}$
has $p$-torsion subgroup scheme $H$ contained in $T[p]\cong (\mu_p)^l$,
where $l$ is the dimension of $T$. Here the Lie algebra of $T[p]$ is equal
to the Lie algebra of $T$, and the Lie algebra of $H$ is defined over $\F_p$
in terms of the $\F_p$-structure above on $\t$. So if $gxg^{-1}$ is in $\t$,
then $g \t g^{-1}=\t$, since $x$ is contained in no hyperplane of $\t$
defined over $\F_p$. For $p=0$, the same conclusion holds,
since the Lie algebra of $T\cap gTg^{-1}$ is a subspace of $\t$
defined over $\Z$. The rest of the argument works for any $p\geq 0$.
Let $E$ be the normalizer of $\t$ in $G$; then we have shown
that $g\in E(k)$.

Clearly $E$ contains $T$. Also, the Lie algebra of $E$ is
$\{y\in\g: [y,\t]\subset \t\}$. Since $\t$ acts nontrivially on each
of the 1-dimensional root spaces which span $\g/\t$, the Lie algebra
of $E$ is equal to $\t$. Thus $E$ is smooth over $k$, with identity
component equal to $T$. So $E$ is contained in $N_G(T)$. The reverse
inclusion is clear, and so $E=N_G(T)$. Thus the element $g$ above
is in $N_G(T)$, proving our claim.

As mentioned above, it follows that the morphism $\alpha\colon
W\backslash (G/T\times \t)
\arrow \g$ is birational. This map is also $G$-equivariant, where
$G$ acts on $G/T$ and by conjugation on $\g$.
Because $\alpha$ is dominant, the restriction
$O(\g)^G\arrow O(\t)^W$ is injective. Because $\alpha$ is birational,
every $W$-invariant polynomial $f$ on $\t$ corresponds to a $G$-invariant
rational function on $\g$. We follow Springer-Steinberg's
argument: write $f=f_1/f_2$ with $f_1$ and $f_2$ relatively
prime polynomials. The center $Z(G)$ acts trivially on $\g$.
Since $G/Z(G)$ equals its own commutator subgroup,
every homomorphism $G/Z(G)\arrow G_m$ is trivial, and so both
$f_1$ and $f_2$ are $G/Z(G)$-invariant. Thus $O(\t)^W$ is contained
in the fraction field of $O(\g)^G$. Since the ring $O(\g)^G$ is normal,
it follows that $O(\t)^W=O(\g)^G$, as we want.
\end{proof}

\section{Hodge cohomology of $BG$ at non-torsion primes}

\begin{theorem}
\label{non-torsion}
Let $G$ be a reductive group over a field $k$ of characteristic $p\geq 0$.
Then $H^{>0}(G,O(\g))=0$ if and only if $p$ is not a torsion prime
for $G$.
\end{theorem}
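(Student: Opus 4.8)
The plan is to translate the statement into a property of the Hodge cohomology of the stack $BG$, and then to compute that by comparing $BG$ with $BB$ for a Borel subgroup $B$ and with the flag variety $G/B$. By Corollary \ref{isosmooth}, $H^i(BG,\Omega^j)\cong H^{i-j}(G,S^j(\g^*))$, which vanishes automatically when $i<j$; so $H^{>0}(G,O(\g))=0$ if and only if the bigraded ring $H^*_{\Ho}(BG)=\bigoplus_{i,j}H^i(BG,\Omega^j)$ is concentrated on the diagonal $i=j$, in which case it equals $O(\g)^G$ with $H^n(BG,\Omega^n)=(S^n\g^*)^G$. Cohomology of algebraic groups commutes with field extension, and both conditions in the theorem depend only on the root datum of $G$ and on $p$, so I may assume $k$ algebraically closed and $G$ split over the prime field; using the K\"unneth formula (Proposition \ref{kunneth}) I reduce to $G$ simple, treating the contribution of the fundamental group $\mu$ (via the fibration $B\mu\arrow B\tilde G\arrow BG$ coming from $G=\tilde G/\mu$) separately. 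If $p=0$ there are no torsion primes and $H^{>0}(G,O(\g))=0$ by Bott, so assume $p>0$.

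Fix $B\supseteq T$ with unipotent radical $U$, so that $B/U=T$. By Theorem \ref{parabolic} (with $P=B$), $H^*_{\Ho}(BB)\cong H^*_{\Ho}(BT)=S(\t^*)$, concentrated on the diagonal with $\t^*$ in bidegree $(1,1)$; by Proposition \ref{flag}, $H^*_{\Ho}(G/B)\cong CH^*(G/B)\otimes_{\Z}k$, concentrated on the diagonal, a free $k$-module whose total-degree Poincar\'e series is the Poincar\'e series of $W$ evaluated at $t^2$, namely $\prod_i(1-t^{2d_i})/(1-t^2)$, where $d_1,\dots,d_r$ are the fundamental degrees. The morphism $BB\arrow BG$ has ``fiber'' $G/B$, since $BB=[(G/B)/G]$ and $BB\times_{BG}\Spec(k)=G/B$. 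Pulling the \u{C}ech presentation $\Spec(k)\arrow BG$ back along $BB\arrow BG$ and computing Hodge cohomology simplicially as in Section \ref{notation} produces a Leray--Serre type spectral sequence
$$E_2^{pq}=H^p_{\Ho}(BG)\otimes_k H^q_{\Ho}(G/B)\imp H^{p+q}_{\Ho}(BB)=S(\t^*),$$
using that the connected group $G$ acts trivially on the finite-dimensional ``discrete'' graded vector space $H^*_{\Ho}(G/B)$.

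The two directions of the theorem now come from Demazure's theorem on torsion primes together with Theorem \ref{invariant}. If $p$ is not a torsion prime, then $p$ does not divide $|\pi_1(G)|$, so the degree-$1$ restriction $H^1(BB,\Omega^1)\arrow H^1(G/B,\Omega^1)$ is surjective, and by Demazure the Borel map $S(\t^*)\arrow CH^*(G/B)\otimes k$ is surjective ($CH^*(G/B)\otimes k$ is generated in degree $1$); hence the fibre restriction $H^*_{\Ho}(BB)\arrow H^*_{\Ho}(G/B)$ is surjective. The Leray--Hirsch theorem then applies: the spectral sequence degenerates and $S(\t^*)=H^*_{\Ho}(BB)\cong H^*_{\Ho}(BG)\otimes_k H^*_{\Ho}(G/B)$ as bigraded $H^*_{\Ho}(BG)$-modules. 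Since $H^*_{\Ho}(BB)$ and $H^*_{\Ho}(G/B)$ are concentrated on the diagonal and the latter has a unit in bidegree $(0,0)$, so is $H^*_{\Ho}(BG)$; thus $H^{>0}(G,O(\g))=0$. Conversely, suppose $p$ is a torsion prime but $H^{>0}(G,O(\g))=0$. Then $H^*_{\Ho}(BG)=O(\g)^G$ lies on the diagonal, hence in even total degree; since the spectral sequence differentials raise total degree by $1$ and $E_2$ is then concentrated in even total degree, it degenerates, and comparing Poincar\'e series gives $P_{H^*_{\Ho}(BG)}(t)=\prod_i(1-t^{2d_i})^{-1}$, i.e.\ $\dim_k(O(\g)^G)_n$ equals its characteristic-$0$ value for every $n$. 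But a torsion prime divides no root of a simple group (the only exception, $\Sp(2n)$ in characteristic $2$, has $2$ not a torsion prime), so Theorem \ref{invariant} identifies $O(\g)^G$ with $S(\t^*)^W$; and Demazure's theorem says that for a torsion prime $p$ the dimension $\dim_k(S(\t^*)^W)_n$ exceeds its characteristic-$0$ value for some $n$. This contradiction proves $H^{>0}(G,O(\g))\neq 0$.

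The hard part will be making the flag-variety fibration precise in the stacky setting: constructing the spectral sequence for $G/B\arrow BB\arrow BG$ in Hodge cohomology out of the simplicial machinery of Section \ref{notation}, checking that the relevant monodromy on $H^*_{\Ho}(G/B)$ is trivial (the algebraic analogue of a connected Lie group acting trivially on cohomology), and verifying the Leray--Hirsch hypothesis. The remaining inputs are the two forms of Demazure's theorem used above --- surjectivity over $\F_p$ of the Borel map $S(\t^*)\to CH^*(G/B)\otimes\F_p$, and the strict growth of the $W$-invariants $S(\t^*)^W$ at torsion primes --- together with the reduction of the general reductive case to simple groups, the fundamental-group contribution being handled through the fibration $B\mu\arrow B\tilde G\arrow BG$ and Theorem \ref{invariant}.
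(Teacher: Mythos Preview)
Your overall strategy matches the paper's: both use the Leray--Serre spectral sequence for the fibration $G/B\to BB\to BG$ in Hodge cohomology (packaged in the paper as Proposition~\ref{ls}, which carries out the construction you flag as ``the hard part''), and your argument for ``$p$ not a torsion prime $\Rightarrow H^{>0}(G,O(\g))=0$'' is essentially the paper's.

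For the converse you take a detour that introduces a gap. Assuming $H^*_{\Ho}(BG/k)$ is diagonal, you correctly observe that the spectral sequence degenerates and deduce the Hilbert-series identity $P_{O(\g)^G}(t)=\prod_i(1-t^{d_i})^{-1}$; you then invoke Theorem~\ref{invariant} to identify $O(\g)^G\cong O(\t)^W$ and appeal to ``Demazure's theorem says that for a torsion prime $p$ the dimension $\dim_k(S(\t^*)^W)_n$ exceeds its characteristic-$0$ value for some $n$.'' This is not what Demazure proved. His theorem says that $S(X^*(T)\otimes\F_p)^W$ is a polynomial ring on generators in the fundamental degrees if and only if $p$ is not a torsion prime (with the $C_n$, $p=2$ exception); failure to be such a polynomial ring does not by itself force a strictly larger Hilbert series. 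What you would need is that the inclusion $S(X^*(T))^W\otimes\F_p\hookrightarrow S(X^*(T)\otimes\F_p)^W$ is proper, i.e.\ that $H^1(W,S(X^*(T)))$ has nonzero $p$-torsion, precisely at the torsion primes --- plausible, but it requires proof and you give none. Your reduction to simple $G$ (needed for Theorem~\ref{invariant}) via the unanalyzed fibration $B\mu\to B\tilde G\to BG$ is a second loose end.

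The paper's argument for this direction is shorter and sidesteps all of this. Once the spectral sequence degenerates, the edge homomorphism
$$H^*_{\Ho}(BT/k)\twoheadrightarrow E_\infty^{0,*}=E_2^{0,*}=H^*_{\Ho}((G/B)/k)$$
is surjective. This edge map is exactly the Borel map $S(X^*(T)\otimes k)\to CH^*(G/B)\otimes k$, and its surjectivity is, by definition, the statement that $p$ is not a torsion prime. No reduction to simple groups, no Theorem~\ref{invariant}, and no Hilbert-series comparison are needed; the argument works uniformly for every reductive $G$.
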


\begin{theorem}
\label{integral}
Let $G$ be a split reductive group over $\Z$, and let $p$ be a non-torsion
prime for $G$. Then $H^j(BG_{\Z},\Omega^i)$ localized at $p$ is zero
for $i\neq j$. Moreover, the Hodge cohomology ring $H^*(BG_{\Z},\Omega^*)$
and the de Rham cohomology $H_{\dR}^*(BG/{\Z})$, localized at $p$,
are polynomial rings on generators of degrees equal
to 2 times the fundamental degrees of $G$.
These rings are isomorphic to the cohomology of the topological
space $BG_{\C}$ with $\Z_{(p)}$ coefficients.
\end{theorem}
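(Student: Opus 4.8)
The plan is to deduce the integral statement from the two field cases $k=\F_p$ and $k=\Q$ and then glue over $\Z_{(p)}$. Throughout, by Theorem \ref{iso} (with $l_G=\g^*$, since the split reductive group $G$ is smooth over $\Z$) we have $H^i(BG,\Omega^j)\cong H^{i-j}(G,S^j(\g^*))$; and $H^i(BG_\Z,\Omega^j)$ is computed by a \u{C}ech complex whose terms $\Omega^j(G^n_\Z)$ ($n\geq 0$) are flat over $\Z$, because $G_\Z$ is smooth affine. Hence these groups commute exactly with the flat base change $\Z\to\Z_{(p)}$, and with $\Z_{(p)}\to\F_p$ and $\Z_{(p)}\to\Q$ up to the universal coefficient sequence over the principal ideal domain $\Z_{(p)}$. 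The same applies to $BB_\Z$ and to $(G/B)_\Z$, the latter being moreover proper and smooth, so that its Hodge cohomology groups are finitely generated over the base.

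First I would settle the field case: for a field $k$ of characteristic $0$ or of a non-torsion prime $p$ for $G$, I claim that $H^i(BG_k,\Omega^j)=0$ for $i\neq j$ and that $H^*_{\Ho}(BG_k)\cong S_k(\t^*)^W=S_k(X^*(T)\otimes k)^W$, which by Demazure \cite{Demazure} is a polynomial ring on generators of degrees the fundamental degrees $d_i$. Indeed, $H^*_{\Ho}(BT_k)=S_k(\t^*)$, concentrated in Hodge bidegrees $(j,j)$, by Corollary \ref{isosmooth} and linear reductivity of the torus; by Theorem \ref{parabolic} (with $P=B$, $L=T$) the restriction $H^*_{\Ho}(BB_k)\to H^*_{\Ho}(BT_k)$ is an isomorphism; and by Proposition \ref{flag}, $H^*_{\Ho}((G/B)_k)\cong CH^*(G/B)\otimes k$, which for $p$ non-torsion is the coinvariant algebra $S_k(\t^*)/(S_k(\t^*)^W_+)$, the restriction from $H^*_{\Ho}(BB_k)=S_k(\t^*)$ being the quotient map and hence surjective (the Borel presentation of $CH^*(G/B)$, valid over $k$ precisely because $p$ is non-torsion \cite{Demazure}). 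I would then run the spectral sequence of the $G/B$-bundle $BB_k\to BG_k$ on a \u{C}ech presentation (using the K\"unneth formula, Proposition \ref{kunneth}, to identify the Hodge cohomology of its terms): the surjectivity just noted puts one in a Leray--Hirsch situation, so $H^*_{\Ho}(BB_k)$ is free over $H^*_{\Ho}(BG_k)$ on a homogeneous basis of $H^*_{\Ho}((G/B)_k)$. In particular $H^*_{\Ho}(BG_k)\inj H^*_{\Ho}(BT_k)=S_k(\t^*)$, with image inside $S_k(\t^*)^W$; and comparing Hilbert series — dividing $(1-t)^{-l}$ for $S_k(\t^*)$ by the Poincar\'e polynomial $\prod_i(1+t+\cdots+t^{d_i-1})$ of $W$ gives $\prod_i(1-t^{d_i})^{-1}$ — forces the image to be exactly $S_k(\t^*)^W$. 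Since that ring is concentrated in diagonal Hodge bidegrees, so is $H^*_{\Ho}(BG_k)$, which reproves the vanishing direction of Theorem \ref{non-torsion} over $k$; and the identification with $S_k(\t^*)^W$ agrees with the classical description of $H^*(BG_\C,k)$. (For $\Sp(2n)$ in characteristic $2$ and for $G_2$ in characteristic $3$ the map $O(\g)^G\to O(\t)^W$ of Theorem \ref{invariant} fails, some roots being divisible by $p$; but the argument here passes through $CH^*(G/B)$ rather than $O(\g)^G$, so it still applies.)

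Next comes the gluing. By the universal coefficient sequences over $\Z_{(p)}$ and the field case, $H^i(BG_{\Z_{(p)}},\Omega^j)$ is a torsion, $p$-divisible $\Z_{(p)}$-module for $i\neq j$, hence zero unless $i=j$ or $i=j+1$; in the remaining case $N_j:=H^{j+1}(BG_{\Z_{(p)}},\Omega^j)$ satisfies $N_j\otimes\Q=0=N_j/p$, and $N_j[p]$ is the cokernel of $H^j(BG_{\Z_{(p)}},\Omega^j)\otimes\F_p\to H^j(BG_{\F_p},\Omega^j)$. But $H^j(BG_{\Z_{(p)}},\Omega^j)=S^j(\g^*)^{G_{\Z_{(p)}}}$ is a submodule of the finite free module $S^j(\g^*)$, hence finite free of rank $\dim_\Q H^j(BG_\Q,\Omega^j)$; so $\dim_{\F_p}N_j[p]=\dim_{\F_p}H^j(BG_{\F_p},\Omega^j)-\dim_\Q H^j(BG_\Q,\Omega^j)$, and by the field case both dimensions equal the coefficient of $t^j$ in $\prod_i(1-t^{d_i})^{-1}$. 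Therefore $N_j[p]=0$, so $N_j=0$ (a $p$-power-torsion module with no $p$-torsion vanishes), and $H^i(BG_{\Z_{(p)}},\Omega^j)=0$ for all $i\neq j$. Now $H^*_{\Ho}(BG_{\Z_{(p)}})=\bigoplus_j S^j(\g^*)^{G_{\Z_{(p)}}}$ is finite free in each degree, and by the previous step both reductions $\otimes\F_p$ and $\otimes\Q$ are polynomial rings on generators of degrees $d_i$; lifting such generators gives a graded map $\Z_{(p)}[y_1,\dots,y_l]\to H^*_{\Ho}(BG_{\Z_{(p)}})$ that is onto by graded Nakayama in each degree and injective because it is injective after $\otimes\Q$ (the $y_i$ stay algebraically independent, as a primitive rational relation would reduce to a nonzero relation mod $p$). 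Hence $H^*_{\Ho}(BG_{\Z_{(p)}})$ is a polynomial ring on generators of Hodge bidegree $(d_i,d_i)$, i.e.\ total degree $2d_i$.

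Finally, the Hodge--de Rham spectral sequence $E_1^{ij}=H^j(BG_{\Z_{(p)}},\Omega^i)\imp H^{i+j}_{\dR}(BG/\Z_{(p)})$ has $E_1$ concentrated on the diagonal $i=j$, so it degenerates (no nonzero differential preserves the diagonal) and, being multiplicative with one surviving term in each total degree, gives $H^*_{\dR}(BG/\Z_{(p)})\cong H^*_{\Ho}(BG_{\Z_{(p)}})$ as graded rings, again polynomial on generators of degrees $2d_i$. On the topological side, $H^*(BG_\C,\Z_{(p)})\cong S_{\Z_{(p)}}(X^*(T))^W$ is a polynomial ring on generators of degrees $2d_i$ by the classical results of Borel and Demazure \cite{Demazure}, using once more that $p$ is non-torsion. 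Thus all three rings are polynomial $\Z_{(p)}$-algebras on generators of the same degrees, hence isomorphic, and one checks the isomorphisms are compatible with restriction to $BT$. The \emph{main obstacle} is the field case of the second paragraph: there is no off-the-shelf Leray--Hirsch theorem for this stacky Hodge cohomology, so the spectral sequence of the bundle $BB\to BG$ must be analysed directly on a \u{C}ech presentation and its degeneration read off from the classes lifted from the fibre $G/B$ — and it is exactly here, via the Borel presentation of $CH^*(G/B)$ and the structure of $S_k(X^*(T))^W$, that the hypothesis that $p$ is a non-torsion prime enters.
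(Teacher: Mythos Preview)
Your argument is largely sound and parallels the paper's own proof: both use the spectral sequence of Proposition~\ref{ls} for the $G/B$-bundle $BB\to BG$ to establish the field case (your Leray--Hirsch step is exactly the degeneration argument in the paper's proof of Theorem~\ref{non-torsion}), and both then lift to $\Z_{(p)}$ via universal coefficients. Your gluing step and the Hodge--de~Rham degeneration are fine.

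There is, however, a genuine gap in the field case for $G=\Sp(2n)$ at $p=2$. You assert that $CH^*(G/B)\otimes k$ is the coinvariant algebra $S_k(\t^*)/(S_k(\t^*)^W_+)$ and that your Hilbert-series count forces the image of $H^*_{\Ho}(BG_k)$ in $S_k(\t^*)$ to be all of $S_k(\t^*)^W$, which by Demazure is polynomial in the fundamental degrees. But for $\Sp(2n)$ over $\F_2$ the sign-change part of $W$ acts trivially on $\t$, so $S_{\F_2}(\t^*)^W=\F_2[e_1,\ldots,e_n]$ with $|e_i|=i$: its Hilbert series is $\prod_{i=1}^n(1-t^i)^{-1}$, not $\prod_{i=1}^n(1-t^{2i})^{-1}$. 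Correspondingly $CH^*(G/B)\otimes\F_2\cong\F_2[x_1,\ldots,x_n]/(e_1^2,\ldots,e_n^2)$, not the coinvariant algebra, and the restriction $H^*_{\Ho}(BG_{\F_2})\hookrightarrow S_{\F_2}(\t^*)^W$ is a \emph{proper} inclusion (the image is $\F_2[e_1^2,\ldots,e_n^2]$). So from your argument you know the Hilbert series of $H^*_{\Ho}(BG_{\F_2})$ but not that it is a polynomial ring. Your parenthetical that the argument ``passes through $CH^*(G/B)$ rather than $O(\g)^G$, so it still applies'' is precisely backwards here: you have avoided Theorem~\ref{invariant}, but you are still invoking Demazure's structure theorem for $S_k(\t^*)^W$, and $\Sp(2n)$ at $p=2$ is the one exception to that theorem. (Your inclusion of $G_2$ at $p=3$ among the exceptions is a red herring: no root of $G_2$ is divisible by $3$ in the weight lattice, so Theorem~\ref{invariant} does apply there, and Demazure's result covers it anyway.)

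The paper closes this gap by a direct computation: it shows that $O(\sp(2n))^{\Sp(2n)}$ over $\F_2$, though a proper subring of $O(\t)^W$, is nonetheless the polynomial ring $\F_2[c_2,c_4,\ldots,c_{2n}]$, by sandwiching it between $((O(\sl(2))^{SL(2)})^n)^{S_n}$ (via the subgroup $S_n\ltimes SL(2)^n\subset\Sp(2n)$) and the subring generated by the even characteristic-polynomial coefficients on $\sp(2n)\subset\gl(2n)$. You need some such argument to finish.
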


We recall the definition of torsion primes for a reductive group $G$
over a field $k$. Let $B$ be a Borel subgroup of $G_{\overline{k}}$,
and $T$ a maximal torus in $B$.
Then there is a natural homomorphism from the character group
$X^*(T)=\Hom(T,G_m)$ (the weight lattice of $G$) to the Chow group
$CH^1(G_{\overline{k}}/B)$.
Therefore, for $N=\dim(G_{\overline{k}}/B)$,
there is a homomorphism from the symmetric
power $S^N(X^*(T))$ to $CH^N(G_{\overline{k}}/B)$; taking the degree
of a zero-cycle on $G_{\overline{k}}/B$ gives a homomorphism
(in fact, an isomorphism) $CH^N(G_{\overline{k}}/B)\arrow\Z$.
A prime number $p$ is said
to be a torsion prime for $G$ if the image of $S^N(X^*(T))\arrow \Z$
is zero modulo $p$. Borel showed that
$p$ is a torsion prime for $G$ if and only if
the cohomology $H^*(BG_{\C},\Z)$ has $p$-torsion, where $G_{\C}$ is the
corresponding complex reductive group.
Various other characterizations of the torsion primes
for $G$ are summarized in \cite[section 1]{Totarospin}.

In most cases, Theorem \ref{non-torsion}
follows from Theorem \ref{donkin}. Explicitly,
a prime number $p$ is non-torsion for a simply connected simple group $G$
if $p\neq 2$ if $G$ has a simple factor not of type $A_n$ or $C_n$,
$p\neq 2,3$ if $G$ has a simple factor of type $F_4$, $E_6$,
$E_7$, or $E_8$,
and $p\neq 2,3,5$ if $G$ has an $E_8$ factor.
So the main new
cases in Theorem \ref{non-torsion} are
the symplectic groups $\Sp(2n)$ in characteristic 2 and $G_2$
in characteristic 3. (These are non-torsion primes, but not good
primes in the sense of Theorem \ref{donkin}.)
In these cases, the representation-theoretic
result that $H^{>0}(G,O(\g))=0$ seems to be new. Does
$O(\g)$ have a good filtration in these cases?

The following spectral sequence, modeled on the Leray-Serre
spectral sequence in topology, will be important for the rest
of the paper.

\begin{proposition}
\label{ls}
Let $P$ be a parabolic subgroup of a split reductive group $G$ over
a field $k$. Let $L$ be the quotient of $P$ by its unipotent radical.
Then there is a spectral sequence of algebras
$$E_2^{ij}=H^i_{\Ho}(BG/k)\otimes H^j_{\Ho}((G/P)/k)
\imp H^{i+j}_{\Ho}(BL/k).$$
\end{proposition}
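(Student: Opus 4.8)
The plan is to construct the spectral sequence as the $E_1$- or $E_2$-page of a filtered complex coming from a suitable simplicial or fibered presentation of $BL$, mimicking the Leray--Serre spectral sequence of the fibration $G/P \to BP \to BG$ in topology. Concretely, since $L$ is the Levi quotient of $P$, Theorem~\ref{parabolic} gives $H^*_{\Ho}(BL/k) \cong H^*_{\Ho}(BP/k)$, so it suffices to produce a spectral sequence converging to $H^{i+j}_{\Ho}(BP/k)$. The morphism $BP \to BG$ of stacks has "fiber" $G/P$: more precisely, $BP \cong [ (G/P) / G ]$, and the \u{C}ech simplicial space of the smooth surjection $EG/G = \Spec(k) \to BG$ pulls back to the simplicial space with $n$-simplices $G^n \times (G/P)$ presenting $BP$. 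Applying the functor $\Omega^*$ (with zero differential) to this bisimplicial object produces a double complex; one of the two spectral sequences of this double complex is the one we want.

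First I would set up the double complex explicitly. Choose the standard presentation: $H^*_{\Ho}(BP/k)$ is the cohomology of the simplicial scheme $B_\bullet P$ with $(B_n P) = P^n$, but to see the $BG$-fibration structure I instead use the simplicial scheme whose $n$-simplices are $G^{n} \times (G/P)$, which is the \u{C}ech nerve of $G/P \to BP$ lying over the nerve of $\Spec(k) \to BG$. Since $G$ and $G/P$ are smooth and each $G^n \times (G/P)$ is a scheme (indeed $G/P$ is projective, so these are quasi-projective), Hodge cohomology of $BP$ is the cohomology of the associated cochain complex of $\Omega^*$-groups. Filtering by the simplicial degree coming from the $BG$-direction gives a spectral sequence whose $E_1$-term involves, in simplicial degree $i$, the Hodge cohomology of $G^i \times (G/P)$. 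By the K\"unneth formula (Proposition~\ref{kunneth}), $H^*_{\Ho}((G^i \times G/P)/k) \cong H^*_{\Ho}(G^i/k) \otimes_k H^*_{\Ho}((G/P)/k)$, where I should note that $G^i$ here denotes an $i$-fold product of schemes (not stacks); and taking cohomology of the resulting cosimplicial object in the $BG$-direction, with the factor $H^*_{\Ho}((G/P)/k)$ coming along for the ride as a constant coefficient system (the monodromy is trivial because $G/P$ has a cell decomposition and $H^*_{\Ho}((G/P)/k)$ is spanned by algebraic cycle classes, by Proposition~\ref{flag}), yields $E_2^{ij} = H^i_{\Ho}(BG/k) \otimes_k H^j_{\Ho}((G/P)/k)$.

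The multiplicative structure is inherited from the diagonal on the simplicial scheme together with the product on $\Omega^*$: the \u{C}ech/bar construction is a simplicial object in schemes with diagonal maps, so the associated complex is a differential graded algebra and the filtration is multiplicative, giving a spectral sequence of algebras by the standard argument (e.g.\ \cite[Theorem~29.3]{May} for the Eilenberg--Zilber comparison, compatibly with products). Convergence is automatic because the filtration is bounded in each total degree: $H^j_{\Ho}((G/P)/k)$ vanishes for $j$ outside the range $[0, 2\dim(G/P)]$ by Proposition~\ref{flag}, so in total degree $n$ only finitely many columns contribute.

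The main obstacle I expect is the identification of the $E_2$-page, specifically showing that the coefficient system $i \mapsto H^*_{\Ho}((G/P)/k)$ appearing on the $E_1$-page is \emph{constant} (trivial monodromy) so that the $E_2$-term factors as a tensor product rather than merely as the cohomology of $BG$ with coefficients in some local system. This is where Proposition~\ref{flag} is essential: since $CH^*(G/P) \otimes_\Z k \to H^*_{\Ho}((G/P)/k)$ is an isomorphism of $k$-algebras and the Chow ring of $G/P$ is canonically attached to the combinatorics of the root datum (independent of any trivialization), the action of $G$ on the Hodge cohomology of the fiber factors through the action on $CH^*(G/P)$, which is trivial because $G$ is connected. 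Making this precise requires checking that the face maps of the simplicial scheme $G^\bullet \times (G/P)$ induce on fiber cohomology exactly the identity after passing to $E_1$; I would do this by comparing with the cycle class map and using that algebraic correspondences act functorially (Proposition~\ref{pushforward}), so that the required maps are the ones induced by the left-translation action of $G(k)$ on $G/P$, which is homotopically (indeed cycle-theoretically) trivial. The secondary technical point is verifying that all schemes in sight are of bounded size and that the big-\'etale-site cohomology agrees with the na\"ive cochain-complex cohomology for these simplicial schemes, which follows from the discussion in Section~\ref{notation} since every $G^i \times (G/P)$ is a quasi-projective scheme over $k$.
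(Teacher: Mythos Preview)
Your approach is sound and produces the desired spectral sequence, but it differs from the paper's argument in its choice of filtration. The paper does not filter by simplicial degree. Instead, for any $G/P$-bundle $f\colon X\to Y$ over a smooth base, it filters the sheaf $\Omega^*_X$ itself by the number of \emph{relative} (fiberwise) form directions, so that the $j$th graded piece is $f^*\Omega^*_Y\otimes\Omega^j_{X/Y}$. Pushing forward gives a filtration of $Rf_*\Omega^*_X$ in $D(Y)$ with graded pieces $\Omega^*_Y\otimes Rf_*\Omega^j_{X/Y}$, and Proposition~\ref{flag} is used to show that $Rf_*\Omega^j_{X/Y}$ is the \emph{trivial} bundle with fiber $H^j(G/P,\Omega^j)$, placed in cohomological degree $j$; this is the paper's replacement for your trivial-monodromy step. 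That construction is then applied to the morphism $EG/P\to BG$ of simplicial schemes, with Lemma~\ref{EG} and Theorem~\ref{parabolic} identifying the abutment. Your route avoids derived pushforwards and works entirely at the level of explicit cochains, while the paper's route makes the multiplicative structure on the filtration (hence on the spectral sequence) transparent from the outset.

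There is one point you pass over that the paper makes explicit: a re-indexing is required. Filtering by simplicial degree $i$ gives $E_1^{i,\ast}=H^\ast_{\Ho}(G^i\times G/P)$, and after K\"unneth and trivial monodromy the $d_1$-cohomology in the $i$-direction of the cosimplicial object $i\mapsto H^\ast_{\Ho}(G^i/k)=\bigoplus_c\Omega^c(G^i)$ is $\bigoplus_c H^i(BG,\Omega^c)$, \emph{not} $H^i_{\Ho}(BG/k)$. So your $E_2$-page as written is actually $\bigl(\bigoplus_c H^i(BG,\Omega^c)\bigr)\otimes H^j(G/P,\Omega^j)$, tri-graded by $(i,j,c)$. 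The paper's filtration has the same feature. In both cases one observes that all differentials preserve the total form degree $b=c+j$, so one may regroup by setting $i'=i+c$ and $j'=2j$; this yields the stated $E_2^{i'j'}=H^{i'}_{\Ho}(BG/k)\otimes H^{j'}_{\Ho}((G/P)/k)$. This is routine once noticed, but as written your claim ``taking cohomology \ldots\ yields $E_2^{ij}=H^i_{\Ho}(BG/k)\otimes H^j_{\Ho}((G/P)/k)$'' is not literally what the simplicial filtration produces. (Also, your phrase ``bisimplicial object'' is loose---the \v{C}ech nerve you describe has only one simplicial direction---but this does not affect the substance.)
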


\begin{proof}
Consider $\Omega^*=\oplus \Omega^i$ as a presheaf of commutative dgas
on smooth $k$-schemes, with zero differential.

For a smooth morphism $f\colon X\arrow Y$ of smooth $k$-schemes,
consider the object $Rf_*(\Omega^*_X)$ in the derived category $D(Y)$
of etale sheaves on $Y$.
Here the sheaf $\Omega^*_X$ on $X$ has an increasing filtration, compatible
with its ring structure, with
0th step the subsheaf $f^*(\Omega^*_Y)$ and $j$th graded piece
$f^*(\Omega^*_Y)\otimes \Omega^j_{X/Y}$. So $Rf_*(\Omega^*_X)$
has a corresponding filtration in $D(Y)$, with $j$th graded
piece $Rf_*(f^*(\Omega^*_Y)\otimes \Omega^j_{X/Y})\cong
\Omega^*_Y\otimes Rf_*\Omega^j_{X/Y}$. This gives a spectral sequence
$$E_2^{ij}=H^{i+j}(Y,\Omega^*_Y\otimes Rf_*\Omega^j_{X/Y})
\imp H^{i+j}(X,\Omega^*_X).$$

Now specialize to the case where $f\colon X\arrow Y$ is the $G/P$-bundle
associated to a principal $G$-bundle over $Y$.
The Hodge cohomology of $G/P$ is essentially independent of the base field,
by the isomorphism $H^*_{\Ho}((G/P)/k)\cong CH^*(G/P)\otimes_{\Z}k$
(Proposition \ref{flag}). Therefore, each object $Rf_*(\Omega^j_{X/Y})$
is a trivial vector bundle on $EG/P$, with fiber $H^j(G/P,\Omega^j)$, viewed
as a complex in degree $j$.
So we can rewrite the spectral sequence as
$$E_2^{ij}=H^{i}(Y,\Omega^*)\otimes H^j(G/P,\Omega^j)
\imp H^{i+j}(X,\Omega^*).$$

All differentials in the spectral sequence above
preserve the degree in the grading of $\Omega^*$. Therefore,
we can renumber the spectral sequence so that it is graded by total
degree:
$$E_2^{ij}=H^{i}_{\Ho}(Y/k)\otimes H^j_{\Ho}((G/P)/k)
\imp H^{i+j}_{\Ho}(X/k).$$

Finally, we consider the analogous spectral sequence
for the morphism $f\colon EG/P\arrow BG$ of simplicial schemes:
$$E_2^{ij}=H^i_{\Ho}(BG/k)\otimes H^j_{\Ho}((G/P)/k)
\imp H^{i+j}_{\Ho}((EG/P)/k).$$
By Lemma \ref{EG},
the output of the spectral sequence is isomorphic to $H^*_{\Ho}(BP/k)$,
or equivalently (by Theorem \ref{parabolic}) to $H^*_{\Ho}(BL/k)$.
This is a spectral sequence of algebras. All differentials preserve
the degree in the grading of $\Omega^*$.
\end{proof}

\begin{proof}
(Theorem \ref{non-torsion})
First, suppose that $H^{>0}(G,O(\g))=0$;
then we want to show that $\cha(k)$ is not a torsion prime for $G$.
By Theorem \ref{iso}, the assumption implies that
$H^j(BG,\Omega^i)=0$ for all $i\neq j$.
Apply Proposition \ref{ls} when $P$ is a Borel subgroup $B$ in $G$;
this gives a spectral sequence
$$E_2^{ij}=H^i_{\Ho}(BG/k)\otimes H^j_{\Ho}((G/B)/k)
\imp H^{i+j}_{\Ho}(BT/k),$$
where $T$ is a maximal torus in $B$.
Under our assumption, this spectral sequence degenerates at $E_2$,
because the differential $d_r$ (for $r\geq 2$)
takes $H^i(BG,\Omega^i)\otimes H^j(G/B,\Omega^j)$ 
into $H^{i+r}(BG,\Omega^{i+r-1})\otimes H^{j-r+1}(G/B,
\Omega^{j-r+1})$, which is zero. It follows that $H^*_{\Ho}(BT/k)
\arrow H^*_{\Ho}((G/B)/k)$ is surjective. Here $H^*_{\Ho}(BT/k)$
is the polynomial ring $S(X^*(T)\otimes k)$
by Theorem \ref{donkin}, and $H^*_{\Ho}((G/B)/k)=
CH^*(G/B)\otimes k$ by Proposition \ref{flag}. It follows
that the ring $CH^*(G/B)\otimes k$ is generated as a $k$-algebra
by the image of $X^*(T)\arrow CH^1(G/B)$.
Equivalently, $p$ is not a torsion prime for $G$.

Conversely, suppose that $p$ is not a torsion prime for $G$.
That is, the homomorphism $S(X^*(T)\otimes k)\arrow
CH^*(G/B)\otimes k$ is surjective. Equivalently,
$H^*_{\Ho}(BT/k)\arrow H^*_{\Ho}((G/B)/k)$ is surjective.
By the product structure on the spectral sequence above,
it follows that
the spectral sequence degenerates at $E_2$.
Since $H^j(BT,\Omega^i)=0$ for $i\neq j$, it follows that
$H^j(BG,\Omega^i)=0$ for $i\neq j$. Equivalently,
$H^{>0}(G,O(\g))=0$.
\end{proof}

\begin{proof}
(Theorem \ref{integral})
Let $G$ be a split reductive group over $\Z$, and let $p$ be a non-torsion
prime for $G$. We have a short exact sequence
$$0\arrow H^j(BG_{\Z},\Omega^i)/p\arrow H^j(BG_{\F_p},\Omega^i)
\arrow H^{j+1}(BG_{\Z},\Omega^i)[p]\arrow 0.$$
By Theorem \ref{non-torsion},
the Hodge cohomology ring $H^*(BG_{\Z},\Omega^*)$ localized at $p$
is concentrated
in bidegrees $H^{i,i}$ and is torsion-free. This ring tensored
with $\Q$ is the ring of invariants $O({\g}_{\Q})^G$, which
is a polynomial ring on generators of degrees equal
to the fundamental degrees of $G$.

To show that the Hodge cohomology ring over $\Z_{(p)}$
is a polynomial ring on generators in $H^{i.i}$ for $i$
running through the fundamental degrees of $G$,
it suffices to show that the Hodge cohomology ring
$H^*_{\Ho}(BG/\F_p)$ is a polynomial ring in the
same degrees. Given that, the other statements
of the theorem will follow. Indeed, the statement
on Hodge cohomology implies that the de Rham cohomology ring
$H^*_{\dR}(BG/\Z)$ localized at $p$ is also a polynomial
ring, on generators in 2 times the fundamental degrees of $G$.
The cohomology of the topological space $BG_{\C}$ localized
at $p$ is known to be a polynomial ring on generators
in the same degrees, by Borel
\cite[section 1]{Totarospin}.

From here on, let $k=\F_p$, and write $G$ for $G_k$.
By definition of the Weyl group $W$ as $W=N_G(T)/T$, the image
of $H^*_{\Ho}(BG/k)$ in $H^*_{\Ho}(BT/k)=S(X(T)\otimes k)$ is contained
in the subring of $W$-invariants. We now use that $p$ is not a torsion
prime for $G$. By Demazure, except in the case
where $p=2$ and $G$ has an $\Sp(2n)$ factor, the ring
of $W$-invariants in $S(X(T)\otimes k)$ is a polynomial algebra over $k$,
with the degrees of generators equal to the fundamental degrees
of $G$ \cite[Th\'eor\`eme]{Demazure}. 

By Theorem \ref{invariant}, for any simple group $G$
over a field $k$ of characteristic $p$ with $p$ not a torsion prime, except
for $G=\Sp(2n)$ with $p=2$, the restriction $O(\g)^G\arrow O(\t)^W$
is an isomorphism. In particular,
for $G=SL(n)$ with $n\geq 3$ over any field $k$,
it follows that
$O(\g)^G$ is a polynomial ring
with generators in the fundamental degrees of $G$, that is,
$2,3,\ldots,n$.

The case of $\Sp(2n)$ in characteristic 2
(including $SL(2)=\Sp(2)$) is a genuine
exception: here $O(\g)^G$ is a subring of $O(\t)^W$, not equal to it.
However, it is still true in this case that $O(\g)^G$ is a polynomial
ring with generators in the fundamental degrees of $G$, that is,
$2,4,\ldots,2n$. One way to check this is first to compute that,
for $k$ of characteristic 2,
$O(\sl(2))^{SL(2)}$ is the subring $k[c_2]$ of $O(\t)^W=k[x_1]$,
where $x_1$ is in degree 1, $c_2$ is in degree 2, and $c_2\mapsto x_1^2$.
(Note that $W\cong \Z/2$ acts trivially on $\t\cong k$ since the characteristic
is 2.) Here $c_2$ is the determinant on the space $\sl(2)$ of matrices
of trace zero, and $O(\sl(2))^{SL(2)}$ is only $k[c_2]$ (not $k[x_1]$)
because the determinant 
$$\det \begin{pmatrix} a & b\\ c& a\end{pmatrix}=a^2+bc$$
is visibly not a square in $O(\sl(2))^{SL(2)}$. To handle $G=\Sp(2n)$
for any $n$, note that the inclusion of $O(\sp(2n))^{\Sp(2n)}$
into $O(\t)^W$ factors through $((O(\sl(2))^{SL(2)})^n)^{S_n}$,
because of the subgroup $S_n\ltimes SL(2)$ in $\Sp(2n)$. By the calculation
for $SL(2)$, $((O(\sl(2))^{SL(2)})^n)^{S_n}$ is isomorphic
to $k[c_2,c_4,\ldots,c_{2n}]$ where $c_{2i}$ is in degree $2i$;
so $O(\sp(2n))^{\Sp(2n)}$ is a subring of that polynomial ring.
Conversely, the even coefficients of the characteristic polynomial
for a matrix in $\sp(2n)\subset \gl(2n)$ restrict to these
classes $c_{2i}$, and so $O(\sp(2n))^{\Sp(2n)}$ is isomorphic
to the polynomial ring $k[c_2,c_4,\ldots,c_{2n}]$, as we want.
\end{proof}

\section{$\mu_p$}

\begin{proposition}
\label{mup}
Let $k$ be a field of characteristic $p>0$.
Let $G$ be the group scheme $\mu_p$ of $p$th roots of unity
over $k$. Then 
$$H^*_{\Ho}(B\mu_p/k)\cong k[c_1]\langle v_1\rangle,$$
where $c_1$ is in $H^1(B\mu_p,\Omega^1)$ and $v_1$ is in
$H^0(B\mu_p,\Omega^1)$.
Likewise $H^*_{\dR}(B\mu_p/k)\cong k[c_1]\langle v_1\rangle$
with $|v_1|=1$ and $|c_1|=2$.
\end{proposition}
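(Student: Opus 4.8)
The plan is to apply Theorem \ref{iso}, which identifies $H^i(B\mu_p,\Omega^j)$ with the hypercohomology $H^{i-j}(\mu_p,S^j(l_{\mu_p}))$ for the derived symmetric power $S^j(l_{\mu_p})$, and to make the right-hand side explicit; one could instead compute through a smooth presentation of $B\mu_p$, but this is quicker. First I would compute the co-Lie complex. Presenting $\mu_p=\Spec k[x]/(x^p-1)$ as a hypersurface in $A^1$ over $k$, its cotangent complex $L_{\mu_p/k}$ is represented by the two-term complex $I/I^2\arrow \Omega^1_{A^1}|_{\mu_p}$ in degrees $-1$ and $0$; since $p=0$ in $k$ we have $d(x^p-1)=px^{p-1}\,dx=0$, so pulling back along the identity section $x=1$ shows that $l_{\mu_p}$ is the complex $k\xrightarrow{\,0\,}k$ in cohomological degrees $-1$ and $0$. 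The $\mu_p$-action on $l_{\mu_p}$ is trivial because $\mu_p$ is commutative (conjugation is trivial), and hence $l_{\mu_p}$ splits as a direct sum $\omega^1_{\mu_p}\oplus H^{-1}(l_{\mu_p})[1]$ of (shifts of) trivial one-dimensional $\mu_p$-modules.

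Next I would compute the derived symmetric powers $S^j(l_{\mu_p})$. By the d\'ecalage isomorphism already used in the proof of Theorem \ref{iso}, the derived $S^j$ of a module placed in cohomological degree $-1$ is the derived $\Lambda^j$ of it placed in degree $-j$; since $\Lambda^j k=0$ for $j\geq 2$, only two summands survive, and the derived symmetric algebra $\bigoplus_j S^j(l_{\mu_p})$ is, with zero differential, the graded-commutative algebra $k[u]\langle\xi\rangle$, where $u$ sits in cohomological degree $0$ and in symmetric degree $1$, while $\xi$ sits in cohomological degree $-1$, in symmetric degree $1$, and satisfies $\xi^2=0$ (because $\Lambda^2 k=0$). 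Since $\mu_p$ is linearly reductive (being diagonalizable), $H^{>0}(\mu_p,M)=0$ for every $\mu_p$-module $M$; therefore $H^*(\mu_p,S^j(l_{\mu_p}))$ is just this complex of trivial modules read off degree by degree, and Theorem \ref{iso} yields $H^*_{\Ho}(B\mu_p/k)\cong k[c_1]\langle v_1\rangle$ with $c_1:=u$ in $H^1(B\mu_p,\Omega^1)$ (total degree $2$) and $v_1:=\xi$ in $H^0(B\mu_p,\Omega^1)$ (total degree $1$). For the multiplicative statement I would use that the isomorphism of Theorem \ref{iso}, taken over all $j$ at once, is induced by a comparison of differential graded algebras and so is a ring isomorphism; as a cross-check, $v_1^2=0$ already because $H^0(B\mu_p,\Omega^2)=0$, and $c_1$ is the restriction of the Hodge first Chern class along $B\mu_p\arrow BG_m$, where $H^*_{\Ho}(BG_m/k)=k[c_1]$ by Corollary \ref{isosmooth} (as $G_m$ is linearly reductive with trivial adjoint action), so all powers $c_1^m$ are nonzero.

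For the de Rham statement I would run the Hodge-to-de-Rham spectral sequence $E_1^{p,q}=H^q(B\mu_p,\Omega^p)\imp H^{p+q}_{\dR}(B\mu_p/k)$, whose $E_1$-term is the ring $k[c_1]\langle v_1\rangle$ just computed. Both algebra generators are permanent cycles: $c_1$ because it is pulled back from $H^*_{\dR}(BG_m/k)=k[c_1]$, where the spectral sequence degenerates ($E_1$ lies on the diagonal); and $v_1\in E_1^{1,0}$ because $d_1$ sends it into $H^0(B\mu_p,\Omega^2)=0$, the higher differentials land in negative $q$, and it is not a $d_1$-boundary (the differential out of $E_1^{0,0}=H^0(B\mu_p,O)=k$ is the de Rham differential, which kills constants). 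By multiplicativity the spectral sequence degenerates at $E_1$, so $H^n_{\dR}(B\mu_p/k)$ is one-dimensional for every $n\geq 0$. Lifting $c_1$ to the de Rham Chern class in $H^2_{\dR}$ and $v_1$ to a class in $H^1_{\dR}$, a short bookkeeping with the Hodge filtration ($F^2H^2_{\dR}=0$ and $H^{2m}_{\dR}=F^mH^{2m}_{\dR}$) gives $v_1^2=0$ and shows that $c_1^m$ and $c_1^mv_1$ generate $H^{2m}_{\dR}$ and $H^{2m+1}_{\dR}$ respectively, so $H^*_{\dR}(B\mu_p/k)\cong k[c_1]\langle v_1\rangle$ with $|v_1|=1$ and $|c_1|=2$.

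The step I expect to be the main obstacle is the derived symmetric power computation in characteristic $p$: the vanishing of the derived $S^j$ of the degree-$(-1)$ line for $j\geq 2$ is precisely what forces every graded piece of $H^*_{\Ho}(B\mu_p/k)$ to be one-dimensional and makes $v_1$ an exterior rather than a polynomial generator even when $p=2$, and to see it one must use the derived symmetric powers and d\'ecalage rather than the naive $S^j$. By comparison, the triviality and splitting of $l_{\mu_p}$, the linear reductivity of $\mu_p$, the multiplicative refinement, and the de Rham degeneration are comparatively routine.
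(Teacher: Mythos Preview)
Your proposal is correct and follows essentially the same route as the paper: apply Theorem \ref{iso}, identify $l_{\mu_p}\cong k\oplus k[1]$ with trivial $\mu_p$-action, compute $S^j(l_{\mu_p})$ via d\'ecalage, use linear reductivity to read off the Hodge groups, and then argue degeneration of the Hodge spectral sequence from the fact that $c_1$ is pulled back from $BG_m$ and $v_1$ maps into $H^0(B\mu_p,\Omega^2)=0$. The only cosmetic difference is in how the splitting of $l_{\mu_p}$ is obtained: you exhibit the explicit two-term model $k\xrightarrow{0}k$ coming from the hypersurface presentation (the differential vanishes since $d(x^p-1)=0$ in characteristic $p$) and note that commutativity makes the adjoint action trivial, whereas the paper observes that the extension class lives in $\Ext^2_{\mu_p}(k,k)=0$ by semisimplicity; both give the same conclusion.
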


Here $R\langle v\rangle$ denotes the exterior algebra
over a graded-commutative ring $R$ with generator $v$; that is,
$R\langle v\rangle=R\oplus R\cdot v$, with product $v^2=0$.
See section \ref{notation}
for the definitions of Hodge and de Rham cohomology
we are using for a non-smooth group scheme such as $\mu_p$.
Proposition \ref{mup} can help to compute
Hodge cohomology of $BG$ for smooth group schemes $G$, as we will
see in the proof of Theorem \ref{son} for $G=SO(n)$.

Proposition \ref{mup} is roughly what the topological
analogy would suggest. Indeed, for $k$ of characteristic $p$,
the ring $H^*((B\mu_p)_{\C},k)$ is a polynomial ring $k[x]$
with $|x|=1$ if $p=2$, or a free graded-commutative algebra
$k\langle x,y\rangle$ with $|x|=1$ and $|y|=2$ if $p$ is odd.
So $H^*_{\dR}(B\mu_p/k)$ is isomorphic to $H^*((B\mu_p)_{\C},k)$
additively for any prime $p$, and as a graded ring if $p>2$.

\begin{proof}
Let $G=\mu_p$ over $k$. The co-Lie complex $l_G$ in the derived category
of $G$-modules, discussed in section \ref{coLie},
has $H^0(l_G)\cong \g^*\cong k$ and also
$H^{-1}(l_G)\cong k$, with other cohomology groups being zero.
(In short, this is because $G$ is a complete intersection in the
affine line, defined by the one equation $x^p=1$.)

Since representations of $G$ are completely reducible,
we have $\Ext_G^{>0}(M,N)=0$ for all $G$-modules $M$ and $N$
\cite[Lemma I.4.3]{Jantzen}.
The isomorphism class of $l_G$ is described by an element
of $\Ext^2_G(k,k)$, which is zero. So $l_G\cong k\oplus k[1]$
in the derived category of $G$-modules.

By Theorem \ref{iso}, we have
$$H^i(BG,\Omega^j)\cong H^{i-j}(G,S^j(l_G)).$$
Here
\begin{align*}
S^j(l_G)&\cong \oplus_{m=0}^j S^m(k)\otimes S^{j-m}(k[1])\\
&\cong \oplus_{m=0}^j S^m(k)\otimes \Lambda^{j-m}(k)[j-m],
\end{align*}
which is isomorphic to $k\oplus k[1]$ if $j\geq 1$ and to $k$
if $j=0$. Therefore, $H^i(BG,\Omega^j)$ is isomorphic to $k$
if $0\leq i=j$ or if $0\leq i=j-1$, and is otherwise zero.

Write $c_1$ for the generator of $H^1(BG,\Omega^1)$, which is
pulled back from the Chern class $c_1$ in $H^1(BG_m,\Omega^1)$ via
the inclusion $G\inj G_m$. Write $v_1$ for the generator
of $H^0(BG,\Omega^1)$. We have $v_1^2=0$ because $H^0(BG,\Omega^2)=0$.
By the proof, Theorem \ref{iso} also describes the ring
structure on the Hodge cohomology of $BG$. In particular,
$\oplus_i H^i(BG,\Omega^i)$ is the ring of invariants of $G$
acting on $O(\g)$, which is the polynomial ring $k[c_1]$.
Finally, the description of $S^j(l_G)$ also shows that
$\oplus_i H^i(BG,\Omega^{i+1})$ is the free module over $k[c_1]$
on the generator $v_1$. This completes the proof that
$$H^*_{\Ho}(BG/k)\cong k[c_1]\langle v_1\rangle.$$

Finally, consider the Hodge spectral sequence for $BG$.
The element $v_1$ is a permanent cycle because $H^0(BG,\Omega^2)=0$,
and $c_1$ is a permanent cycle because it is pulled back
from a permanent cycle on $BG_m$. Therefore, the Hodge
spectral sequence degenerates at $E_1$. We have $v_1^2=0$
in de Rham cohomology as in Hodge cohomology, because
$\oplus_i H^0(BG,\Omega^i)$ is a subring of de Rham cohomology,
using degeneration of the Hodge spectral sequence.
Therefore, the de Rham cohomology of $BG$ is isomorphic
to $k[c_1]\langle v_1\rangle$ as a graded ring.
\end{proof}

\begin{lemma}
\label{finite}
Let $G$ be a discrete group, considered as a group scheme over a field $k$.
Then the Hodge cohomology of the algebraic stack $BG$ is the group
cohomology of $G$:
$$H^i(BG,\Omega^j)\cong \begin{cases}
H^i(G,k)&\text{if }j=0\\
0&\text{otherwise.}
\end{cases}$$
It follows that $H^*_{\dR}(BG/k)\cong H^*(G,k)$.
\end{lemma}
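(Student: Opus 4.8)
The plan is to compute both sides directly from the simplicial (\u{C}ech) description of Hodge and de Rham cohomology recalled in Section \ref{notation}. Viewed as a group scheme over $k$, the discrete group $G$ is the constant group scheme $\coprod_{g\in G}\Spec k$; it is flat and locally of finite presentation over $k$, and in fact \'etale, hence smooth of relative dimension $0$. By Lemma \ref{flatBG} the stack $BG$ is smooth over $k$, and $\Spec k\arrow BG$ is a smooth surjective morphism whose associated \u{C}ech simplicial scheme is the simplicial scheme $BG$ of Section \ref{notation}, with $n$-simplices $(BG)_n=G^n$. So $H^i(BG,\Omega^j)$ is the etale cohomology of this simplicial scheme with coefficients in the big sheaf $\Omega^j$, whose restriction to the term $G^n$ is $\Omega^j_{G^n/k}$.

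I would first dispose of the case $j\geq 1$. Each $G^n$ is a disjoint union of copies of $\Spec k$, each mapping isomorphically onto $\Spec k$, so $\Omega^1_{G^n/k}=0$ and hence $\Omega^j_{G^n/k}=0$ for every $j\geq 1$. Thus the coefficient sheaf vanishes on every term of the simplicial scheme, and $H^i(BG,\Omega^j)=0$ for all $i$ and all $j\geq 1$. (When $G$ is finite, so that $G$ is affine, this is also immediate from Corollary \ref{isosmooth}, since $\g^*=0$.)

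The substantive step is $j=0$. Here I would run the spectral sequence of the simplicial scheme with coefficients in $\Omega^0=O$: its $E_1$-page is $E_1^{ij}=H^j_{\et}(G^i,O)$. Since $G^i$ is a disjoint union of copies of $\Spec k$, it has no higher cohomology, and $H^0(G^i,O)=\mathrm{Maps}(G^i,k)$, so the spectral sequence collapses onto the row $j=0$ and $H^i(BG,\Omega^0)$ is the cohomology of the cochain complex $i\mapsto \mathrm{Maps}(G^i,k)$ whose differential is the alternating sum of the cofaces dual to the face maps of the simplicial scheme $BG$. This is precisely the inhomogeneous (bar) cochain complex computing the group cohomology $H^*(G,k)$ with trivial coefficients, which gives $H^i(BG,\Omega^0)\cong H^i(G,k)$. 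The only point requiring care is this last identification of the \u{C}ech cosimplicial abelian group $[n]\mapsto H^0(G^n,O)$ with the standard complex of $G$; but that is purely a matter of unwinding the face maps of the bar construction, so I expect no real obstacle there.

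Finally, for de Rham cohomology: since $\Omega^{\geq 1}$ vanishes on every term $G^n$ of the simplicial scheme, the de Rham complex of $G^n$ over $k$ is just $O_{G^n}$ in degree $0$, so the de Rham cohomology of $BG$ coincides with its cohomology with coefficients in $O$, namely $H^*(G,k)$; equivalently, the Hodge-to-de Rham spectral sequence degenerates because only the $j=0$ row of Hodge cohomology is nonzero. When $G$ is infinite the terms $G^n$ are no longer quasi-compact, but they remain disjoint unions of copies of $\Spec k$, so all the vanishing and collapse assertions above still hold and the argument goes through unchanged.
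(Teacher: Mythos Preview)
Your proof is correct and follows essentially the same route as the paper: use smoothness of the constant group scheme to pass to the \u{C}ech simplicial scheme $BG$, observe that $\Omega^j$ vanishes on each $G^n$ for $j>0$, and collapse the $E_1$ spectral sequence for $j=0$ to the bar complex computing $H^*(G,k)$. Your additional remarks (the alternative appeal to Corollary \ref{isosmooth} for finite $G$, and the observation that the argument survives when $G$ is infinite) are correct elaborations but not departures from the paper's method.
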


\begin{proof}
Since $G$ is smooth over $k$, we can compute the Hodge cohomology
of the stack $BG$ as the etale cohomology of the simplicial scheme $BG$
with coefficients in $\Omega^j$. Since $G$ is discrete, the sheaf
$\Omega^j$ is zero for $j>0$. For $j=0$, the spectral
sequence
$$E_1^{ab}=H^b(G^a,O)\imp H^{a+b}(BG,O)$$
reduces to a single row, since $H^b(G^a,O)=0$ for $b>0$. That is,
$H^*(BG,O)$ is the cohomology of the standard
complex that computes the cohomology of the group $G$
with coefficients in $k$.
\end{proof}

More generally, we have the following ``Hochschild-Serre''
spectral sequence for the Hodge cohomology
of a non-connected group scheme:

\begin{lemma}
\label{pi0}
Let $G$ be an affine group scheme of finite type over a field $k$.
Let $G^0$ be the identity component of $G$, and suppose that the
finite group scheme $G/G^0$ is the $k$-group scheme associated
to a finite group $Q$. Then there is a
spectral sequence
$$E_2^{ij}=H^i(Q,H^j(BG^0,\Omega^a))\imp H^{i+j}(BG,\Omega^a).$$
for any $a\geq 0$.
\end{lemma}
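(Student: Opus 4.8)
The plan is to realize the desired spectral sequence as the \v Cech-to-derived-functor spectral sequence for a well-chosen smooth presentation of $BG$, or equivalently to recognize the map $BG^0\arrow BG$ as a "$Q$-Galois cover" of stacks and run the Hochschild–Serre machine one level up. Concretely, since $G^0$ is normal in $G$ with quotient the constant finite group scheme $Q$, the morphism $BG^0\arrow BG$ is a $Q$-torsor (in the fppf, hence in the \'etale, topology, as $Q$ is smooth). So $BG^0$ is a smooth algebraic space-free-quotient presentation of $BG$ up to the action of $Q$; more precisely, $BG$ is the quotient stack $[BG^0/Q]$ for the residual $Q$-action. Then the \v Cech simplicial object associated to $BG^0\arrow BG$ is the bar construction $E_\bullet$ with $E_n=BG^0\times_{BG}\cdots\times_{BG}BG^0\cong BG^0\times Q^n$, and section~\ref{notation} gives a spectral sequence
$$E_1^{ij}=H^j(BG^0\times Q^i,\Omega^a)\imp H^{i+j}(BG,\Omega^a).$$

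First I would identify the $E_1$ page. Because $Q$ is a constant finite group scheme (a disjoint union of copies of $\Spec k$), the sheaf $\Omega^a$ on $BG^0\times Q^i$ pulled back from the $Q^i$ factor vanishes for $a>0$ and is just $\oplus_{Q^i}k$ for $a=0$; more usefully, $\Omega^a_{BG^0\times Q^i/k}\cong\Omega^a_{BG^0/k}$ pulled back from the $BG^0$ factor, since $Q^i$ is \'etale over $k$. Hence $H^j(BG^0\times Q^i,\Omega^a)\cong \mathrm{Map}(Q^i, H^j(BG^0,\Omega^a))$, and the $E_1$ complex in the $i$-direction is precisely the standard (inhomogeneous) cochain complex computing the group cohomology of $Q$ with coefficients in the $Q$-module $H^j(BG^0,\Omega^a)$. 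The $Q$-action on $H^j(BG^0,\Omega^a)$ is the natural one coming from conjugation: an element of $Q$, lifted to $G(k)$ (or after an \'etale base change trivializing the torsor), acts on $G^0$ by conjugation and hence on $BG^0$ and on its Hodge cohomology; one checks inner automorphisms act trivially so this descends to a genuine $Q$-action. Taking cohomology in the $i$-direction therefore gives $E_2^{ij}=H^i(Q,H^j(BG^0,\Omega^a))$, which is the claimed $E_2$ page, and the spectral sequence converges to $H^{i+j}(BG,\Omega^a)$ as required.

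The step I expect to be the main obstacle is the bookkeeping identifying the $E_1$ differential with the group-cohomology differential, i.e. checking that the face maps of the \v Cech simplicial space $E_\bullet$ (which involve the multiplication $Q\times Q\arrow Q$, the two projections, and the action of $Q$ on $BG^0$ via conjugation) induce on $H^j(BG^0,\Omega^a)$-valued cochains exactly the alternating sum $\sum(-1)^r\partial_r$ of the standard bar complex for $H^*(Q,-)$. This is the usual "\v Cech nerve of a $Q$-torsor computes $Q$-equivariant cohomology" identification (it is the same computation as in Lemma~\ref{finite}, where the base was $\Spec k$ instead of $BG^0$), so no new idea is needed, but one must be careful that the $Q$-module structure appearing is the conjugation action and that it is well-defined independent of the chosen lifts/trivialization. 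A clean way to package all of this is to note that $BG=[BG^0/Q]$, apply the spectral sequence of section~\ref{notation} to the presentation $BG^0\arrow [BG^0/Q]$, and then invoke the general principle that for an action of a constant finite group $Q$ on a stack $Z$ the \v Cech spectral sequence of $Z\arrow[Z/Q]$ is the Hochschild–Serre spectral sequence $H^i(Q,H^j(Z,\mathcal F))\imp H^{i+j}([Z/Q],\mathcal F)$ for any sheaf $\mathcal F$ on the big \'etale site $-$ a statement whose proof is exactly the cochain-level comparison just described, with $Z=BG^0$ and $\mathcal F=\Omega^a$.
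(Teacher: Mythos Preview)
Your argument is correct, but the paper's proof is considerably shorter and takes a different route. The paper simply applies Theorem~\ref{iso} to rewrite $H^r(BG,\Omega^a)$ as $H^{r-a}(G,S^a(l_G))$, and then invokes the Hochschild--Serre spectral sequence for algebraic group cohomology (Theorem~\ref{hsss}) applied to the normal subgroup $G^0\subset G$. Since $G/G^0$ is \'etale, the co-Lie complexes $l_G$ and $l_{G^0}$ agree as $G^0$-modules, so $H^j(G^0,S^a(l_G))\cong H^{j+a}(BG^0,\Omega^a)$ by Theorem~\ref{iso} for $G^0$, and a reindexing gives the claimed spectral sequence in two lines.

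Your approach instead works directly at the stack level via the \'etale $Q$-torsor $BG^0\to BG$ and its \v Cech nerve. This is conceptually transparent and does not rely on Theorem~\ref{iso} at all, which is a genuine advantage. One small point to flag: the \v Cech spectral sequence of section~\ref{notation} is stated there only for a smooth surjective morphism from an algebraic \emph{space} $U$, whereas you take $U=BG^0$, a stack. Cohomological descent certainly holds in this generality, but you are invoking a mild extension of what the paper records; you should either cite the more general descent statement or, as you suggest at the end, compose with a smooth presentation of $BG^0$ and run a double-complex argument. The paper's route sidesteps this entirely by translating into the category of $G$-modules, where Hochschild--Serre is already available off the shelf.
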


\begin{proof}
By Theorem \ref{iso}, $H^r(BG,\Omega^a)$ is isomorphic
to $H^{r-a}(G,S^a(l_G))$. The lemma then follows
from the Hochschild-Serre spectral sequence for the cohomology
of $G$ as an algebraic group, Theorem \ref{hsss}.
\end{proof}

\section{The orthogonal groups}

\begin{theorem}
\label{son}
Let $G$ be the split group $SO(n)$ (also called $O^{+}(n)$)
over a field $k$ of characteristic 2. Then
the Hodge cohomology ring of $BG$ is a polynomial ring
$k[u_2,u_3,\ldots,u_n]$, where $u_{2a}$ is in $H^a(BG,\Omega^a)$
and $u_{2a+1}$ is in $H^{a+1}(BG,\Omega^a)$. Also, the Hodge spectral
sequence degenerates at $E_1$, and so $H^*_{\dR}(BG/k)$ is also
isomorphic to $k[u_2,u_3,\ldots,u_n]$.

Likewise, the Hodge and de Rham cohomology rings
of $BO(2r)$ are isomorphic to the polynomial ring
$k[u_1,u_2,\ldots,u_{2r}]$. Finally, the Hodge and de Rham
cohomology rings of $BO(2r+1)$ are isomorphic to
$k[v_1,c_1,u_2,\ldots,u_{2r+1}]/(v_1^2)$,
where $v_1$ is in $H^0(BO(2r+1),\Omega^1)$
and $c_1$ is in $H^1(BO(2r+1),\Omega^1)$.
\end{theorem}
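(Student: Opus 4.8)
The plan is to reduce the whole statement to the computation of $H^*_{\Ho}(BSO(n)/k)$ and to prove the latter by induction on $n$, mimicking the classical computation of $H^*(BSO(n);\F_2)$. One may enlarge $k$ (Hodge cohomology of a stack, computed on a simplicial scheme, commutes with flat base change), so assume $k$ is algebraically closed. The base cases are $SO(1)=1$, with $H^*_{\Ho}=k$, and $SO(2)=G_m$, with $H^*_{\Ho}(BG_m/k)=k[u_2]$, $u_2=c_1\in H^1(BG_m,\Omega^1)$; every other $SO(n)$ is reached by the step $n\mapsto n-2$.

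For the inductive step I would take the maximal parabolic $P\subset SO(n)$ stabilizing an isotropic line, with Levi quotient $L=G_m\times SO(n-2)$, so that $SO(n)/P$ is the smooth quadric $Q^{n-2}$ of isotropic lines in $k^n$. By Proposition \ref{flag}, $H^*_{\Ho}(Q^{n-2}/k)\cong CH^*(Q^{n-2})\otimes_{\Z}k$, with its classical ring structure (powers of the hyperplane class $h$ together with the classes $\ell,h\ell,\ldots$ of the maximal linear subspaces). Proposition \ref{ls} then gives a multiplicative spectral sequence
$$E_2^{ij}=H^i_{\Ho}(BSO(n)/k)\otimes_k H^j_{\Ho}(Q^{n-2}/k)\ \imp\ H^{i+j}_{\Ho}(BP/k),$$
and by Theorem \ref{parabolic} together with the K\"unneth formula (Proposition \ref{kunneth}) the abutment is $H^*_{\Ho}(BL/k)=k[c_1]\otimes_k H^*_{\Ho}(BSO(n-2)/k)=k[c_1,u_2,\ldots,u_{n-2}]$ by the inductive hypothesis. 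Comparing Poincar\'e series, this spectral sequence does not degenerate; the crux is to pin down its differentials. The point should be that the image of the restriction $H^*_{\Ho}(BL/k)\arrow H^*_{\Ho}(Q^{n-2}/k)$ is spanned by the powers of $h$ (computed from the characteristic classes of the tautological flag on $Q^{n-2}$; the low-dimensional quadrics $Q^0,Q^1,Q^2$ need separate attention), so these survive and produce the factor $k[c_1]$, whereas the classes $\ell,h\ell,\ldots$ transgress and their transgressions are exactly two new algebra generators $u_{n-1},u_n$; after these differentials the spectral sequence collapses and forces $H^*_{\Ho}(BSO(n)/k)=k[u_2,\ldots,u_n]$. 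The bidegrees are then determined without appeal to topology: by Theorem \ref{iso} the diagonal part $\oplus_i H^i(BSO(n),\Omega^i)$ is the invariant ring $O(\so(n))^{SO(n)}$, which by Theorem \ref{invariant} --- no root of $SO(n)$ is divisible by $2$ --- equals $O(\t)^W$, and since the sign changes in $W$ act trivially on $\t$ in characteristic $2$ this is the polynomial ring $k[u_2,u_4,\ldots]$ on the even-indexed generators (with $u_{2a}$ of bidegree $(a,a)$); the remaining generators are the transgressions above and lie one box off the diagonal, $u_{2a+1}\in H^{a+1}(BSO(n),\Omega^a)$.

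For the degeneration of the Hodge--de Rham spectral sequence of $BSO(n)$ at $E_1$ it suffices, by multiplicativity, that each generator be a permanent cycle: the even $u_{2a}$ are algebraic cycle classes (hence permanent cycles for the Hodge--de Rham spectral sequence), and the odd $u_{2a+1}$ are obtained from them by the canonical Bockstein-type map $H^a(\Omega^a)\arrow H^{a+1}(\Omega^a)$, which is compatible with the de Rham differential; thus $H^*_{\dR}(BSO(n)/k)\cong k[u_2,\ldots,u_n]$. For $O(2r)$, the subgroup $SO(2r)$ is the kernel of the Dickson invariant, so $BSO(2r)\arrow BO(2r)$ is an \'etale double cover and Lemma \ref{pi0} (with $\Z/2=O(2r)/SO(2r)$) gives a spectral sequence $H^i(\Z/2,H^j_{\Ho}(BSO(2r)/k))\imp H^{i+j}_{\Ho}(BO(2r)/k)$; the $\Z/2$-action is trivial because conjugation by a reflection in $O(2r)$ is the restriction of an inner automorphism of $SO(2r+2)$ while $H^*_{\Ho}(BSO(2r+2)/k)\surj H^*_{\Ho}(BSO(2r)/k)$, so $E_2=k[u_1]\otimes k[u_2,\ldots,u_{2r}]$ with $u_1\in H^1(BO(2r),\Omega^0)$ the pullback of the generator of $H^1(\Z/2,k)$; since $u_1$ and the cycle classes are permanent cycles this degenerates to $k[u_1,\ldots,u_{2r}]$, and likewise for de Rham cohomology. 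For $O(2r+1)$ one combines the same \'etale-$\Z/2$ argument with the feature, special to characteristic $2$, that $O(2r+1)$ admits a surjection onto $\mu_2$; pulling back along it and invoking Proposition \ref{mup} ($H^*_{\Ho}(B\mu_2/k)=k[c_1]\langle v_1\rangle$, $v_1\in H^0(\Omega^1)$, $c_1\in H^1(\Omega^1)$) splits off the exterior-by-polynomial factor and gives $H^*_{\Ho}(BO(2r+1)/k)\cong k[v_1,c_1,u_2,\ldots,u_{2r+1}]/(v_1^2)$, with the same ring for $H^*_{\dR}$.

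The step I expect to be the main obstacle is the identification of the differentials in the non-degenerate spectral sequence of Proposition \ref{ls} for the quadric bundle: one must show, uniformly in $n$, that the non-permanent classes of the fiber $H^*_{\Ho}(Q^{n-2}/k)$ transgress precisely to two new polynomial generators and nothing else, which requires the parity-dependent bookkeeping of $CH^*(Q^{n-2})\otimes\F_2$ and is exactly the information that separates $H^*(BSO(n);\F_2)$ from the ``naive'' collapsed answer. A secondary difficulty is the triviality of the group actions invoked above (the $\Z/2$ on $H^*_{\Ho}(BSO(2r))$, and the compatibility of the $\mu_2$-splitting for $O(2r+1)$), which must be handled with some care given the non-smooth group scheme $\mu_2$ in characteristic $2$.
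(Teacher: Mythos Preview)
Your overall plan---induction on $n$ via the line-stabilizing parabolic with Levi $G_m\times SO(n-2)$ and fiber the quadric $Q^{n-2}$---is a genuinely different route from the paper's. The paper instead fixes $n$, takes the parabolic $P$ stabilizing a \emph{maximal} isotropic subspace (Levi $GL(r)$), and runs the spectral sequence of Proposition~\ref{ls} toward the known abutment $H^*_{\Ho}(BGL(r)/k)=k[c_1,\dots,c_r]$. The key difference is how surjectivity of $k[u_2,\dots,u_n]\to H^*_{\Ho}(BSO(n)/k)$ is established: the paper \emph{first} proves injectivity by an explicit computation---defining the odd classes as $u_{2a+1}=\beta u_{2a}+u_1u_{2a}$ with $\beta$ the Bockstein coming from the lift of $O(2r)$ to $\Z/4$, restricting to $BO(2)^r$ (resp.\ to $(\Z/2)^{r-1}\times(\mu_2)^r\subset SO(2r)$), and checking that the resulting map of polynomial rings has nonvanishing Jacobian via a Vandermonde determinant---and \emph{then} feeds this injectivity into Zeeman's comparison theorem applied to the $GL(r)$-parabolic spectral sequence to force the transgressions (Lemma~\ref{induction}). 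Your inductive scheme never produces an independent injectivity statement, so at the step you flag as ``the main obstacle'' you have no leverage: knowing only the fiber and the abutment does not determine the base without a comparison input of this kind.

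There is also a concrete error in your treatment of $O(2r+1)$. In characteristic $2$ the group scheme $O(2r+1)$ is not smooth; it is isomorphic to $SO(2r+1)\times\mu_2$, and in particular $O(2r+1)/SO(2r+1)\cong\mu_2$ is connected, so there is no \'etale $\Z/2$ quotient and Lemma~\ref{pi0} does not apply. The paper simply uses the product decomposition together with K\"unneth and Proposition~\ref{mup}. Relatedly, your ``canonical Bockstein-type map $H^a(\Omega^a)\to H^{a+1}(\Omega^a)$'' needs to be made precise: the paper's Bockstein exists because the split groups lift smoothly to $\Z/4$, and its compatibility with restriction to $BO(2)^r$ is what makes the injectivity computation work. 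Your degeneration argument for the Hodge--de~Rham spectral sequence likewise leans on this unspecified map; the paper instead argues that $u_{2a}$ survives by injecting into $H^*_{\dR}(BT/k)$ and that $u_{2a+1}\in H^{a+1}(BG,\Omega^a)$ can only support a $d_1$ into the diagonal, which is already shown to receive no differentials. Finally, your argument for triviality of the $\Z/2$-action on $H^*_{\Ho}(BSO(2r)/k)$ appeals to surjectivity of restriction from $BSO(2r+2)$, which in your inductive order has not yet been computed; the paper avoids this by constructing the generators directly on $BO(2r)$.
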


Thus the de Rham cohomology ring of $BSO(n)_{\F_2}$ is isomorphic
to the mod 2 cohomology ring of the topological space $BSO(n)_{\C}$
as a graded ring:
$$H^*(BSO(n)_{\C},\F_2)\cong \F_2[w_2,w_3,\ldots,w_n],$$
where the classes $w_i$ are the Stiefel-Whitney classes.
Theorem \ref{son} gives a new analog of the Stiefel-Whitney classes
for quadratic bundles in characteristic 2. (Note that the $k$-group scheme
$O(2r+1)$ is not smooth in characteristic 2.
Indeed, it is isomorphic to $SO(2r+1)\times \mu_2$.)

The proof is inspired by topology. In particular, it involves
some hard work with spectral sequences, related to Borel's
transgression theorem and Zeeman's comparison theorem.
The method should be useful for
other reductive groups.

The formula for the classes $u_i$
of a direct sum of two quadratic bundles
is not the same as for the Stiefel-Whitney classes in topology.
To state this, define a quadratic form $(q,V)$ over a field $k$
to be {\it nondegenerate }if the radical $V^{\perp}$ of the associated
bilinear form is zero, and {\it nonsingular }if $V^{\perp}$
has dimension at most 1 and $q$ is nonzero on any
nonzero element of $V^{\perp}$.
(In characteristic not 2, nonsingular and nondegenerate are the same.)
The orthogonal group is defined as the automorphism group scheme
of a nonsingular quadratic form \cite[section VI.23]{KMRT}. For example,
over a field $k$ of characteristic 2, the quadratic form
$$x_1x_2+x_3x_4+\cdots+x_{2r-1}x_{2r}$$
is nonsingular of even dimension $2r$, while the form
$$x_1x_2+x_3x_4+\cdots+x_{2r-1}x_{2r}+x_{2r+1}^2$$
is nonsingular of odd dimension $2r+1$, with $V^{\perp}$ of dimension 1.
Let $u_0=1$.

\begin{proposition}
\label{whitney}
Let $X$ be a scheme of finite type over a field $k$ of characteristic 2.
Let $E$ and $F$ be vector bundles with nondegenerate quadratic forms
over $X$ (hence of even rank).
Then, for any $a\geq 0$, in either Hodge cohomology or de Rham cohomology,
$$u_{2a}(E\oplus F)=\sum_{j=0}^a u_{2j}(E)u_{2a-2j}(F)$$
and
$$u_{2a+1}(E\oplus F)=\sum_{l=0}^{2a+1} u_{l}(E)u_{2a+1-l}(F).$$
\end{proposition}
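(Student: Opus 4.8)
The plan is to reduce everything to the universal case by naturality and then pin down a short list of $\F_2$-coefficients using a characteristic-$2$ splitting principle. Write $E$ and $F$ for nondegenerate quadratic bundles of even ranks $2r$ and $2s$ on $X$; each is classified by a morphism to $BO(2r)$, resp.\ $BO(2s)$ (split orthogonal group schemes), and $E\oplus F$ is classified by the composite with the direct-sum homomorphism $\varphi\colon BO(2r)\times_k BO(2s)\arrow BO(2r+2s)$, using that the orthogonal sum of the two split forms is split. So it suffices to compute $\varphi^* u_k$ for the universal classes. By Theorem \ref{son} and the K\"unneth formula (Proposition \ref{kunneth}, applicable since these classifying stacks are quasi-compact with affine diagonal), the target ring is the bigraded polynomial ring
$$H^*_{\Ho}((BO(2r)\times_k BO(2s))/k)\cong k[u_1(E),\ldots,u_{2r}(E)]\otimes_k k[u_1(F),\ldots,u_{2s}(F)],$$
in which $u_i$ has Hodge bidegree $(\lceil i/2\rceil,\lfloor i/2\rfloor)$ and $\varphi^*$ is a bigraded ring map. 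I would also observe at the start that the de Rham statement follows from the Hodge one: in this polynomial ring $H^a(BO(2r)\times_k BO(2s),\Omega^b)=0$ for $a<b$ (an odd generator contributes $1$, an even one $0$, to the difference (cohomological degree)$-$(form degree)), so together with the degeneration of the Hodge spectral sequence (Theorem \ref{son}) there is no room for lower-Hodge-filtration corrections and the de Rham identities coincide with the Hodge ones.

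The bigrading already dictates the shape of the answer. Call the \emph{excess} of a monomial the difference (cohomological degree)$-$(form degree), which equals the number of odd-index factors. Since $u_{2m}$ has excess $0$ and $u_{2m+1}$ has excess $1$, $\varphi^* u_{2m}$ is a polynomial in the \emph{even} classes $u_2(E),u_4(E),\ldots,u_2(F),\ldots$ alone, while $\varphi^* u_{2m+1}$ is a sum of terms (one odd class)$\cdot$(monomial in even classes). Together with the normalization $u_k(E\oplus 0)=u_k(E)$ (take $F$ trivial, where $\varphi$ is the identity) this gives $\varphi^* u_k=u_k(E)+u_k(F)+(\text{cross terms of the two permitted types})$. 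This is the conceptual source of the two different-looking formulas: a ``topological'' cross term such as $u_1(E)u_{2m-1}(F)$ inside $u_{2m}(E\oplus F)$ has excess $2$, hence is forbidden. What remains is to show that every permitted cross term occurs with coefficient $1$, i.e.\ that $\varphi^* u_k$ equals the stated symmetric-function-type expression and not that expression plus further monomials of the same bidegree (for instance $u_2(E)^2u_2(F)$ inside $\varphi^*u_6$).

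To compute the coefficients I would use a splitting principle. Restrict along $O(2)^r\times O(2)^s\inj O(2r)\times O(2s)$, iterating the orthogonal-sum inclusion $O(2m-2)\times O(2)\inj O(2m)$. Two inputs are needed. First, injectivity of $H^*_{\Ho}(BO(2m)/k)\arrow H^*_{\Ho}(BO(2)^m/k)$; both sides are polynomial rings on $2m$ generators (Theorem \ref{son}, K\"unneth), and I would prove this by induction on $m$, reducing to injectivity of $H^*_{\Ho}(BO(2m)/k)\arrow H^*_{\Ho}((BO(2m-2)\times BO(2))/k)$ together with a triangularity argument on the generators, and matching Poincar\'e series so that the image is exactly the $S_m$-invariant subring. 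Second, an identification of the restriction of each universal $u_i$ to $BO(2)^m$ with the appropriate elementary-symmetric combination of the rank-$2$ classes; its base case is a single split plane $P$, which I would handle by restricting the $O(2)$-factor further, on one side to $SO(2)=G_m$ (where, by Proposition \ref{mup} or directly, $u_1(P)\mapsto 0$ while $u_2(P)$ maps to the first Chern class $c_1$) and on the other side to the constant subgroup $\Z/2\subset O(2)$ generated by a reflection (where, by Lemma \ref{finite}, $u_2(P)\mapsto 0$ while $u_1(P)$ maps to the nonzero class of $H^1(\Z/2,k)$, since the Dickson invariant is nontrivial on a reflection). Feeding the second input into the first repeatedly turns the Whitney formula into the multiplicativity of the total rank-$2$ class under orthogonal sum, which over $\F_2$ is an elementary symmetric-function identity once the bidegree truncation is imposed; the general case then descends along the injective restriction maps.

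The hard part will be the base case, specifically the mismatch between what the two coordinate restrictions detect. The maps to $BSO(2)=BG_m$ and to $B(\Z/2)$ together see the pure classes $u_1(P)$ and $u_2(P)$ of a plane, but both kill every mixed monomial $u_1(P)u_2(P)\cdot(\cdots)$, so by themselves they cannot exclude spurious mixed cross terms of the correct bidegree in $\varphi^*u_k$. Excluding these forces the injectivity of $H^*_{\Ho}(BO(2m)/k)\arrow H^*_{\Ho}((BO(2m-2)\times BO(2))/k)$ to be established \emph{before} the Whitney formula is available to describe the restriction maps on generators; this is a piece of modular ($p=2$) invariant theory in which the constant $\Z/2$ inside $O(2)$ behaves quite differently from the torus $SO(2)$, and it is the real technical content of the proposition. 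Once that injectivity and the accompanying Poincar\'e-series count are in hand, the remainder is bookkeeping.
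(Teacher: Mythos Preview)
Your reduction to the universal case and your bidegree analysis are both correct and match the paper's opening move. But you have made the problem much harder than it is by overlooking how the classes $u_i$ are \emph{defined} in Theorem~\ref{son}; the paper's proof is only a few lines.

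For the even classes, $u_{2a}\in H^a(BO(2m),\Omega^a)$ is by construction the element of $O(\so(2m))^{O(2m)}$ restricting to $e_a(t_1,\ldots,t_m)$ on the maximal torus. Restriction to $BO(2r)\times BO(2s)$ lands, by your own excess count, in $\bigoplus_j H^j(BO(2r),\Omega^j)\otimes H^{a-j}(BO(2s),\Omega^{a-j})$, which injects into the torus cohomology; there the identity $e_a(t_1,\ldots,t_{r+s})=\sum_j e_j(t_1,\ldots,t_r)\,e_{a-j}(t_{r+1},\ldots,t_{r+s})$ is the whole story. No detour through $BO(2)^m$ is needed.

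For the odd classes your proposal has a genuine gap. In the proof of Theorem~\ref{son} the paper \emph{defines} $u_{2a+1}=\beta u_{2a}+u_1u_{2a}$ via the Bockstein for the lift of $O(2m)$ to $\Z/4$. Since the direct-sum inclusion $O(2r)\times O(2s)\hookrightarrow O(2(r+s))$ lifts over $\Z$, $\beta$ commutes with $\varphi^*$; applying $\beta$ (a derivation) to the even formula and adding $u_1(E\oplus F)\cdot u_{2a}(E\oplus F)=(u_1(E)+u_1(F))\sum_j u_{2j}(E)u_{2a-2j}(F)$ yields the odd formula by a two-line cancellation. Your plan to pin down the odd coefficients by restricting a plane to $SO(2)$ and to the reflection $\Z/2$ cannot succeed, as you yourself observe: both kill every mixed monomial, and injectivity of the splitting map does not by itself recover that lost information.

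A further correction: your claim that the image of $H^*_{\Ho}(BO(2m)/k)\hookrightarrow H^*_{\Ho}(BO(2)^m/k)$ is the full $S_m$-invariant subring is false, so the Poincar\'e-series matching you rely on does not hold. Already for $m=2$ the invariant $s_1s_2$ is not in the image $k[s_1{+}s_2,\,t_1{+}t_2,\,s_1t_2{+}s_2t_1,\,t_1t_2]$ (and in general the multisymmetric invariant ring is not polynomial). The injectivity itself \emph{is} established in the proof of Theorem~\ref{son}, but it is established precisely by computing the restrictions of the $u_i$ via the Bockstein formula---the step you are missing.
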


Thus the {\it even }$u$-classes of $E\oplus F$ depend only
on the even $u$-classes of $E$ and $F$.
By contrast, Stiefel-Whitney classes in topology satisfy
$$w_m(E\oplus F)=\sum_{l=0}^m w_l(E)w_{m-l}(F)$$
for all $m$ \cite[Theorem III.5.11]{MT}.

Theorem
\ref{spin} gives an example of a reductive group $G$ for which
the de Rham cohomology of $BG_{\F_p}$ and the mod $p$ cohomology
of $BG_{\C}$ are not isomorphic.
It is a challenge to find out how close these rings are,
in other examples.

Via Theorem \ref{iso}, Theorem \ref{son} can be viewed as a calculation
in the representation theory of the algebraic group $G=SO(n)$ for any $n$,
over a field $k$ of characteristic 2. For example,
when $G=SO(3)=PGL(2)$ over $k$ of characteristic 2, we find
(what seems to be new):
$$H^i(G, S^j(\g^*))\cong \begin{cases} k&\text{if }0\leq i\leq j\\
0&\text{otherwise.}
\end{cases}$$

\begin{proof}
(Theorem \ref{son})
We will assume that $k=\F_2$. This implies the theorem
for any field of characteristic 2.

We begin by computing the ring $\oplus_i H^i(BG,\Omega^i)$
for $G=SO(n)$.
By Theorem \ref{iso}, this is equal to the ring of $G$-invariant polynomial
functions on the Lie algebra $\g$ over $k$. By Theorem \ref{invariant},
since no roots of $G$ are divisible by 2 in the weight lattice for $G$,
the restriction $O(\g)^G\arrow O(\t)^W$ is an isomorphism.

Let $r=\lfloor n/2\rfloor$.
For $n=2r+1$, the Weyl group $W$ is the semidirect product
$S_r\ltimes (\Z/2)^r$. There is a basis $e_1,\ldots,e_r$
for $\t$ on which $(\Z/2)^r$ acts by changing the signs,
and so that action is trivial since
$k$ has characteristic 2. The group $S_r$ has its standard
permutation action on $e_1,\ldots,e_r$. Therefore, the ring
of invariants $O(\t)^W$ is the ring of symmetric functions
in $r$ variables. Let $u_2,u_4,\ldots,u_{2r}$ denote the elementary
symmetric functions. By the isomorphisms mentioned, we can view
$u_{2a}$ as an element of $H^a(BSO(2r+1),\Omega^a)$ for $1\leq a\leq r$,
and $\oplus_i H^i(BSO(2r+1),\Omega^i)$ is the polynomial
ring $k[u_2,u_4,\ldots,u_{2r}]$.

For $n=2r$, the Weyl group $W$ of $SO(2r)$ is the semidirect product
$S_r\ltimes (\Z/2)^{r-1}$. Again, the subgroup $(\Z/2)^{r-1}$ acts
trivially on $\t$, and $S_r$ acts by permutations as usual.
So $\oplus_i H^i(BSO(2r),\Omega^i)$ is also the polynomial
ring $k[u_2,u_4,\ldots,u_{2r}]$, with $u_{2a}$ in $H^a(BSO(2r),
\Omega^a)$ for $1\leq a\leq r$.

For the smooth $k$-group $G=O(2r)$, we can also compute the ring
$\oplus_i H^i(BG,\Omega^i)$. By Theorem \ref{iso},
this is the ring of $G$-invariant polynomial functions on the Lie algebra
$\g=\so(2r)$. This is contained in the ring of $SO(2r)$-invariant
functions on $\g$, and I claim that the two rings are equal. It suffices
to show that an $SO(2r)$-invariant function on $\g$ is also invariant
under the normalizer $N$ in $O(2r)$ of a maximal torus $T$ in $SO(2r)$,
since that normalizer meets both connected components of $O(2r)$.
Here $N=S_r\ltimes (\Z/2)^r$, which acts on $\t$ in the obvious way;
in particular, $(\Z/2)^r$ acts trivially on $\t$. Therefore, an
$SO(2r)$-invariant function on $\g$ (corresponding to an $S_r$-invariant
function on $\t$) is also $O(2r)$-invariant. Thus we have
$\oplus_i H^i(BO(2r),\Omega^i)=k[u_2,u_4,\ldots,u_{2r}]$.

For a smooth group scheme $G$ over $R=\Z/4$,
define the {\it Bockstein}
$$\beta\colon H^i(BG_k,\Omega^j)\arrow H^{i+1}(BG_k,\Omega^j)$$
on the Hodge cohomology of $BG_k$ (where $k=\Z/2$)
to be the boundary homomorphism
associated
to the short exact sequence of sheaves
$$0\arrow \Omega^j_k\arrow \Omega^j_R\arrow \Omega^j_k\arrow 0$$
on $BG_R$. The Bockstein on Hodge cohomology
is not defined for group schemes
such as $\mu_2$ which are flat but not smooth over $R=\Z/4$, because
the sequence of sheaves above need not be exact.

Next, define elements $u_1,u_3,\ldots,u_{2r-1}$ of $H^*_{\Ho}(BO(2r)/k)$
as follows. 
First, let $u_1\in H^1(BO(2r),\Omega^0)$
be the pullback of the generator of $H^1(\Z/2,k)=k$
via the surjection $O(2r)\arrow \Z/2$ (Lemma \ref{finite}).
Next, use that the split group $O(2r)$ over $k=\F_2$ lifts
to a smooth group $O(2r)$ over $\Z$. As a result, we have a Bockstein
homomorphism on the Hodge cohomology of $BO(2r)$.
For $0\leq a\leq r-1$, let $u_{2a+1}=\beta u_{2a}+u_1u_{2a}\in H^{a+1}(BO(2r),
\Omega^a)$. This agrees with the previous formula for $u_1$,
if we make the convention that $u_0=1$.
(The definition of $u_{2a+1}$ is suggested by the formula
for odd Stiefel-Whitney classes in topology:
$w_{2a+1}=\beta w_{2a}+w_1w_{2a}$ \cite[Theorem III.5.12]{MT}.)

I claim that the homomorphism
$$k[u_1,u_2]\arrow H^*_{\Ho}(BO(2)/k)$$
is an isomorphism. To see this,
consider the Hochschild-Serre spectral sequence of Lemma \ref{pi0},
$$E_2^{ij}=H^i(\Z/2,H^j(BSO(2)_k,\Omega^*))\imp H^{i+j}(BO(2)_k,\Omega^*).$$
Here $SO(2)$ is isomorphic to $G_m$, and so we know
the Hodge cohomology of $BSO(2)$ by Theorem \ref{donkin}:
$H^*_{\Ho}(BSO(2)/k)\cong k[c_1]$ with $c_1$ in $H^1(BSO(2),\Omega^1)$.
We read off
that the $E_2$ page of the spectral sequence is the polynomial
ring $k[u_1,u_2]$, with $u_1$ in $H^1(\Z/2,H^0(BSO(2),\Omega^0))$
and $u_2$ in $H^0(\Z/2,H^1(BSO(2),\Omega^1))$. Here $u_1$ is
a permanent cycle, because all differentials send $u_1$
to zero groups. Also, because the surjection $O(2)\arrow \Z/2$
of $k$-groups
is split, there are no differentials into the bottom row of the spectral
sequence; so $u_2$ is also a permanent cycle. It follows that
the spectral sequence degenerates at $E_2$, and hence
that $H^*_{\Ho}(BO(2)/k)\cong k[u_1,u_2]$.

We also need to compute the Bockstein on the Hodge cohomology
of $BO(2)$, which is defined because $O(2)$ lifts to a smooth group scheme
over $R:=\Z/4$. The Bockstein is related to the Hodge cohomology
of $BO(2)_R$ by the exact sequence
$$H^i(BO(2)_R,\Omega^j)\arrow H^i(BO(2)_k,\Omega^j)
\xrightarrow[\beta]{} H^{i+1}(BO(2)_k,\Omega^j).$$
Consider the Hochschild-Serre spectral
sequence of Lemma \ref{pi0} for $BO(2)_R$:
$$E_2^{ij}=H^i(\Z/2,H^j(BSO(2)_R,\Omega^*))\imp H^{i+j}(BO(2)_R,\Omega^*).$$
Here $H^1(BO(2)_R,\Omega^1)$ is isomorphic
to $H^0(\Z/2,H^1(BSO(2)_R,\Omega^1))$, where
$\Z/2$ acts by $-1$ on $H^1(BSO(2)_R,\Omega^1)\cong \Z/4$. So the generator
of $H^1(BO(2)_R,\Omega^1)\cong \Z/2$ maps to zero in $H^1(BO(2)_k,\Omega^1)
=k\cdot u_2$. Therefore, $\beta(u_2)\neq 0$. Since $k=\F_2$, the element
$\beta(u_2)$ in $H^2(BO(2)_k,\Omega^1)=k\cdot u_1u_2$ must be equal
to $u_1u_2$. A similar analysis shows that $\beta(u_1)=u_1^2$.

Finally, think of $O(2)$
as the isometry group of the quadratic form $q(x,y)=xy$ on $V=A^2_k$.
There is an inclusion $H=\Z/2\times \mu_2\subset O(2)$,
where $\Z/2$ switches $x$ and $y$ and $\mu_2$ acts by scalars on $V$.
For later use, it is convenient to say something about
the restriction from $BO(2)$ to $BH$ on Hodge cohomology.
By Lemma \ref{finite},
the Hodge cohomology of $B(\Z/2)$ over $k$ is the cohomology
of $\Z/2$ as a group, namely the polynomial ring
$k[s]$ with $s\in H^1(B(\Z/2),O)$.
Also, by Proposition \ref{mup}, the Hodge cohomology of $B\mu_2$
is $k[t,v]/(v^2)$ with $t\in H^1(B\mu_2,\Omega^1)$
and $v\in H^0(B\mu_2,\Omega^1)$.
Thus we have a homomorphism from $H^*_{\Ho}(BO(2)/k)=k[u_1,u_2]$
to $H^*_{\Ho}(BH/k)\cong k[s,t,v]/(v^2)$
(by the K\"unneth theorem, Proposition \ref{kunneth}).
Here $u_1$ restricts to $s$, since both elements are pulled
back from the generator of $H^1(B\Z/2,O)$. Also,
$u_2$ restricts to either $t$ or $t+sv$, because
$u_2$ restricts to the generator $c_1$ of $H^1(BG_m,\Omega^1)$
and hence to $t$ in $H^1(B\mu_2,\Omega^1)$. Thus
the homomorphism from $H^*_{\Ho}(BO(2)/k)$ to $H^*_{\Ho}(BH/k)/\rad
=k[s,t]$ is an isomorphism. (Here the {\it radical }of a commutative ring
means the ideal of nilpotent elements.)
A direct cocycle computation shows that
$u_2$ restricts to $t+sv$ in $H^1(BH,\Omega^1)$, but we do not
need that fact in this paper.

We now return to the group $O(2r)$ over $k=\F_2$ for any $r$.
I claim that the homomorphism
$$k[u_1,u_2,\ldots,u_{2r}]\arrow H^*_{\Ho}(BO(2r)/k)$$
is injective. The idea is to compose this homomorphism
with restriction to the Hodge cohomology of $BO(2)^r$.
Let $s_1,\ldots,s_r\in H^1(BO(2)^r,\Omega^0)$ be the pullbacks
of $u_1$ from the $r$ $BO(2)$ factors, and let $t_1,\ldots,t_r$
be the pullbacks of $u_2$ from those $r$ factors. By the K\"unneth
theorem (Proposition \ref{kunneth}), the Hodge cohomology
of $BO(2)^r$ is the polynomial ring $k[s_1,\ldots,s_r,t_1,\ldots,t_r]$.
The elements $u_2,u_4,\ldots,u_{2r}$ restrict to the elementary
symmetric functions in $t_1,\ldots,t_r$:
$$u_{2a}\mapsto e_a(t_1,\ldots,t_r)=\sum_{1\leq i_1<\cdots<i_a\leq r}
t_{i_1}\cdots t_{i_a}.$$
Also,
$$u_1\mapsto s_1+\cdots+s_r.$$

The inclusion $O(2)^2\subset O(2r)$ lifts to an inclusion
of smooth groups over $\Z$, and so the restriction homomorphism
commutes with the Bockstein. Therefore, for $0\leq a\leq r-1$,
\begin{align*}
u_{2a+1}&=\beta u_{2a}+u_1u_{2a}\\
&\mapsto \beta\bigg( \sum_{1\leq i_1<\cdots<i_a\leq r}
t_{i_1}\cdots t_{i_a}\bigg)
+(s_1+\cdots+s_{r})\bigg( \sum_{1\leq i_1<\cdots<i_a\leq r}
t_{i_1}\cdots t_{i_a}\bigg) \\
&=\sum_{1\leq i_1<\cdots<i_a\leq r}\bigg( \sum_{j=1}^a
s_{i_j}+\sum_{m=1}^{r}s_m \bigg) t_{i_1}\cdots t_{i_a}\\
&=\sum_{m=1}^r s_m\sum_{\substack{1\leq i_1<
\cdots <i_{a}\leq r\\ \text{none equal to }m}}t_{i_1}
\cdots t_{i_{a}}.
\end{align*}

We want to show that this homomorphism $k[u_1,\ldots,u_{2r}]
\arrow k[s_1,\ldots,s_r,t_1,\ldots,t_r]$ is injective. We can factor
this homomorphism through $k[u_1,u_3,\ldots,u_{2r-1},t_1,\ldots,t_r]$,
by the homomorphism $\rho$
sending $u_2,u_4,\ldots,u_{2r}$ to the elementary symmetric
polynomials in $t_1,\ldots,t_r$. Since $\rho$
is injective, it remains to show that
$$\sigma\colon k[u_1,u_3,\ldots,u_{2r-1},
t_1,\ldots,t_r]\arrow k[s_1,\ldots,s_r,t_1,\ldots,t_r]$$
is injective.

More strongly, we will show that $\sigma$ is generically etale;
that is, its Jacobian determinant is not identically zero.
Because $\sigma$ is the identity on the $t_i$ coordinates,
it suffices to show that the matrix of derivatives
of $u_1,u_3,\ldots,u_{2r-1}$ with respect to $s_1,\ldots,s_r$
is nonzero for $s_1,\ldots,s_r,t_1,\ldots,t_r$ generic.
This matrix of derivatives in fact only involves $t_1,\ldots,t_r$,
because $u_1,u_3,\ldots,u_{2r-1}$ have degree 1 in $s_1,\ldots,s_r$.
For example, for $r=3$, this matrix of derivatives is
$$\begin{pmatrix} 1 & t_2+t_3 & t_2t_3\\
1 & t_1+t_3 & t_1t_3\\
1 & t_1+t_2 & t_1t_2
\end{pmatrix},$$
where the $a$th column gives the derivatives of $u_{2a-1}$
with respect to $s_1,\ldots,s_r$. For any $r$, column 1
consists of 1s, while entry $(j,a)$ for $a\geq 2$
is
$$\sum_{\substack{1\leq i_1<
\cdots <i_{a-1}\leq r\\ \text{none equal to }j}}t_{i_1}
\cdots t_{i_{a-1}}.$$
This determinant is equal to the Vandermonde determinant
$\delta:=\prod_{i<j}(t_i-t_j)$, and in particular
it is not identically zero \cite[Theorem 1]{Dresden}.
(The reference works over $\C$, but it amounts to an identity
of polynomials over $\Z$, which therefore holds over any field.)

Thus we have shown that the composition
$k[u_1,\ldots,u_{2r}]\arrow H^*_{\Ho}(BO(2r)/k)$ is injective,
because the composition to $H^*_{\Ho}(BO(2)^r/k)$ is injective.
Analogously, let us show that $k[u_2,\ldots,u_n]\arrow H^*_{\Ho}(BSO(n)/k)$
is injective for every $n\geq 1$. 

For $n=2r+1$, this is easy, using the inclusions
$O(2)^r\subset O(2r)\subset SO(2r+1)$.
Write $u_2,u_3,\ldots,u_{2r+1}$ for
the elements of the Hodge cohomology of $BSO(2r+1)$ defined by the same
formulas as used above for $BO(2r)$ (which simplify to $u_{2a+1}=\beta u_{2a}$,
since there is no element $u_1$ for $BSO(2r+1)$). Also,
let $v_1,\ldots,v_{2r}$ be the elements of the Hodge cohomology
of $BO(2r)$ that were called $u_1,\ldots,u_{2r}$ above.
Then restricting from $BSO(2r+1)$ to $BO(2r)$ sends
$u_{2a}\mapsto v_{2a}$ and $u_{2a+1}=\beta u_{2a}
\mapsto \beta v_{2a}=v_{2a+1}+v_1v_{2a}$ for $1\leq a\leq r-1$.
It is not immediate how to compute the restriction
of the remaining element $u_{2r+1}$ to $BO(2r)$, but we can compute
its restriction to $BO(2)^r$:
\begin{align*}
u_{2r+1}&=\beta u_{2r}\\
&\mapsto \beta v_{2r}\\
&= \beta(t_1\cdots t_r)\\
&=(s_1+\cdots+s_r)(t_1\cdots t_r).
\end{align*}
Thus, the restriction from $BSO(2r+1)$ to $BO(2)^r$
sends $k[u_2,\ldots,u_{2r+1}]$ into the subring
$$k[v_1,\ldots,v_{2r}]\subset k[s_1,\ldots,s_r,t_1,\ldots,t_r],$$
by $u_{2a}\mapsto v_{2a}$ for $1\leq a\leq r$,
$u_{2a+1}\mapsto v_{2a+1}+v_1v_{2a}$ for $1\leq a\leq r-1$,
and $u_{2r+1}\mapsto v_1v_{2r}$. This homomorphism is injective,
because the corresponding morphism $A^{2r}\arrow A^{2r}$
is birational (for $u_{2r}\neq 0$, one can solve for
$v_1,\ldots,v_{2r}$ in terms of $u_2,\ldots,u_{2r+1}$).
So the homomorphism $k[u_2,\ldots,u_{2r+1}]
\arrow H^*_{\Ho}(BSO(2r+1)/k)$ is injective (because its composition
to $H^*_{\Ho}(BO(2)^r/k)$ is injective).

For $SO(2r)$, we argue a bit differently. As discussed above,
there is a subgroup $\Z/2\times \mu_2\subset O(2)$. Therefore,
we have a $k$-subgroup scheme
$(\Z/2\times \mu_2)^r\subset O(2)^r\subset O(2r)$.
Since $SO(2r)$ is the kernel of a homomorphism from $O(2r)$ onto $\Z/2$,
$SO(2r)$ contains a $k$-subgroup scheme $H\cong (\Z/2)^{r-1}\times
(\mu_2)^r$. By Lemma \ref{finite},
the Hodge cohomology of $B(\Z/2)$ over $k$ is the cohomology
of $\Z/2$ as a group, namely the polynomial ring
$k[x]$ with $x\in H^1(B(\Z/2),\Omega^0)$.
Also, by Proposition \ref{mup}, the Hodge cohomology of $B\mu_2$
is $k[t,v]/(v^2)$ with $t\in H^1(B\mu_2,\Omega^1)$
and $v\in H^0(B\mu_2,\Omega^1)$.
Thus we have a homomorphism from $k[u_2,u_3,\ldots,u_{2r}]$
to $H^*_{\Ho}(BSO(2r)/k)$ and from there to
$H^*_{\Ho}(BH/k)\cong k[x_1,\ldots,x_{r-1},t_1,\ldots,t_r,v_1,
\ldots,v_r]/(v_i^2)$
(by the K\"unneth theorem, Proposition \ref{kunneth}).
We want to show that this composition is injective. For convenience,
we will prove the stronger statement
that the composition to 
$$H^*_{\Ho}(BH/k)/\rad=k[x_1,\ldots,x_{r-1},t_1,\ldots,t_r]$$
is injective.

We compare the restriction from $O(2r)$ to $(\Z/2)^r\times (\mu_2)^r$
with that from $SO(2r)$ to $H$:
$$\xymatrix@C-10pt@R-10pt{
k[u_1,\ldots,u_{2r}] \ar[r]\ar[d] & k[u_2,u_3,\ldots,u_{2r}]\ar[d] \\
H^*_{\Ho}(BO(2r)/k) \ar[r]\ar[d] & H^*_{\Ho}(BSO(2r)/k)\ar[d] \\
k[s_1,\ldots,s_r,t_1,\ldots,t_r]\ar[r] & k[x_1,\ldots,x_{r-1},
t_1,\ldots,t_r]
}$$
The bottom homomorphism is given (for a suitable choice of generators
$x_1,\ldots,x_{r-1}$) by $s_i\mapsto x_i$ for $1\leq i\leq r-1$
and $s_r\mapsto x_1+\cdots+x_{r-1}$ (agreeing with the fact
that $u_1\mapsto s_1+\cdots+s_r\mapsto 0$ in the Hodge cohomology
of $BH$). By the formulas for $O(2r)$, we know how the elements
$u_2,\ldots,u_{2r}$ restrict to $k[s_1,\ldots,s_r,t_1,\ldots,t_r]$,
and hence to $k[x_1,\ldots,x_{r-1},t_1,\ldots,t_r]$. Namely,
$$u_{2a}\mapsto e_a(t_1,\ldots,t_r)=\sum_{1\leq i_1<\cdots<i_a\leq r}
t_{i_1}\cdots t_{i_a},$$
and, for $1\leq a\leq r-1$,
\begin{align*}
u_{2a+1}&\mapsto \sum_{1\leq i_1<\cdots<i_a\leq r}
\bigg(\sum_{j=1}^a s_{i_j}+\sum_{m=1}^r s_m \bigg)
t_{i_1}\cdots t_{i_a}\\
&\mapsto \sum_{1\leq i_1<\cdots<i_a\leq r-1}\bigg(\sum_{j=1}^a x_{i_j}\bigg)
t_{i_1}\cdots t_{i_a}\\
&\qquad +\sum_{1\leq i_1<\cdots<i_{a-1}\leq r-1}\bigg( x_1+\cdots+x_{r-1}+
\sum_{j=1}^{a-1} x_{i_j}\bigg) 
t_{i_1}\cdots t_{i_{a-1}}t_r\\
&=\sum_{j=1}^{r-1}x_j(t_j+t_r)\sum_{\substack{1\leq i_1<\cdots<i_{a-1}
\leq r-1\\ \text{none equal to }j}}t_{i_1}\cdots t_{i_{a-1}}.
\end{align*}

We want to show that this homomorphism
$k[u_2,u_3,\ldots,u_{2r}]\arrow k[x_1,\ldots,x_{r-1},
t_1,\ldots,t_r]$ is injective. It can be factored
through $k[u_3,u_5,\ldots,u_{2r-1},t_1,\ldots,t_r]$,
by the homomorphism $\rho$
sending $u_2,u_4,\ldots,u_{2r}$ to the elementary symmetric
polynomials in $t_1,\ldots,t_r$. Since $\rho$
is injective, it remains to show that $\sigma\colon k[u_3,u_5,\ldots,u_{2r-1},
t_1,\ldots,t_r]\arrow k[x_1,\ldots,x_{r-1},t_1,\ldots,t_r]$ is injective.

As in the argument for $O(2r)$, we will show (more strongly)
that $\sigma$ is generically etale;
that is, its Jacobian determinant is not identically zero.
Because $\sigma$ is the identity on the $t_i$ coordinates,
it suffices to show that the matrix of derivatives
of $u_3,u_5,\ldots,u_{2r-1}$ with respect to $x_1,\ldots,x_{r-1}$
is nonzero for $x_1,\ldots,x_{r-1},t_1,\ldots,t_r$ generic.
This matrix of derivatives in fact only involves $t_1,\ldots,t_r$,
because $u_3,u_5,\ldots,u_{2r-1}$ have degree 1 as polynomials
in $x_1,\ldots,x_{r-1}$.
For example, for $r=3$, this $(r-1)\times (r-1)$
matrix of derivatives is
$$\begin{pmatrix} t_1+t_3 & (t_1 +t_3)(t_2)\\
t_2+t_3 & (t_2+t_3)(t_1)
\end{pmatrix},$$
where the $a$th column gives the derivatives of $u_{2a+1}$
with respect to $x_1,\ldots,x_{r-1}$. For any $r$,
the entry $(j,a)$ of the matrix (with $j,a\in\{1,\ldots,r-1\}$) is
$(t_j+t_r)e_{ja}$, where $$e_{ja}=\sum_{\substack{1\leq i_1<
\cdots <i_{a-1}\leq r-1\\ \text{none equal to }j}}t_{i_1}
\cdots t_{i_{a-1}}.$$

Since row $j$ is a multiple of $(t_j+t_r)$ for each $r$,
the determinant is $(t_1+t_r)(t_2+t_r)\cdots(t_{r-1}+t_r)$
times the determinant of the $(r-1)\times (r-1)$ matrix
$E=(e_{ja})$. So it suffices to show that the determinant
of $E$ is not identically zero. Indeed, the determinant of $E$
is the same determinant shown to be nonzero in the calculation
above for $O(2r)$, but with $r$ replaced by $r-1$.

Thus we have shown that $k[u_2,\ldots,u_n]\arrow H^*_{\Ho}(BSO(n)/k)$
is injective for $n$ even as well as for $n$ odd. We now show
that this is an isomorphism. 

Let $r=\lfloor n/2\rfloor$ and $s=\lfloor (n-1)/2\rfloor$.
Let $P$ be the parabolic subgroup of $G=SO(n)$ that stabilizes
a maximal isotropic subspace (that is, an isotropic subspace
of dimension $r$).
Then the quotient of $P$
by its unipotent radical is isomorphic to $GL(r)$.
By Proposition \ref{ls}, we have a spectral sequence
$$E_2^{ij}=H^i_{\Ho}(BG/k)\otimes H^j_{\Ho}((G/P)/k)
\imp H^{i+j}_{\Ho}(BGL(r)/k).$$
The Chow ring of $G/P$ is isomorphic to
$$\Z[e_1,\dots,e_{s}]/(e_i^2-2e_{i-1}e_{i+1}+2e_{i-2}e_{i+2}-
\cdots+(-1)^ie_{2i}),$$
where $e_i\in CH^i(G/P)$ is understood to mean zero if $i>s$
\cite[III.6.11]{MT}. (This uses Chevalley's theorem that the Chow
ring of $G/P$ for a split group $G$ is independent of the characteristic
of $k$, and is isomorphic to the integral cohomology ring
of $G_{\bf C}/P_{\bf C}$.)
By Proposition \ref{flag}, it follows that the Hodge cohomology
ring of $G/P$ is isomorphic to
$$k[e_1,\ldots,e_{s}]/(e_i^2=e_{2i}),$$
where $e_i$ is in $H^i(G/P,\Omega^i)$.
For any list of variables $x_1,\ldots,x_m$,
write $\Delta(x_1,\ldots,x_m)$ for the $k$-vector space with basis
consisting of all products $x_{i_1}\ldots x_{i_j}$
with $1\leq i_1<\cdots<i_j\leq m$
and $0\leq j\leq m$. Then we can say that
$$H^*_{\Ho}((G/P)/k)=\Delta(e_1,\ldots,e_{s}).$$

The spectral sequence converges to $H^*_{\Ho}(BGL(r)/k)=k[c_1,\ldots,c_r]$,
by Theorem \ref{integral}.
The elements $u_2,u_4,\ldots,u_{2r}$ (where $u_{2i}$ is in
$H^i(BG,\Omega^i)$)
restrict to $c_1,c_2,\ldots,c_r$.
So the $E_{\infty}$ term of the spectral sequence is concentrated
on the $0$th row and consists of the polynomial ring
$k[u_2,u_4,\ldots,u_{2r}]$.

To analyze the structure of the spectral sequence further, we use
Zeeman's comparison theorem, which he used to simplify
the proof of the Borel transgression theorem
\cite[Theorem VII.2.9]{MT}. The key point is to show
that the elements $e_i$ (possibly after adding decomposable
elements) are transgressive. (By definition, an element $u$
of $E_2^{0,q}$ in a first-quadrant spectral sequence
is {\it transgressive }if
$d_2=\cdots=d_q=0$ on $u$; then $u$ determines an element
$\tau(u):=d_{q+1}(u)$ of $E_{q+1}^{q+1,0}$, called the transgression
of $u$.)

In order to apply Zeeman's comparison theorem, we define
a model spectral sequence that maps to the spectral sequence
we want to analyze. (To be precise, we consider spectral sequences
of $k$-vector spaces, not of $k$-algebras.) As above,
let $k=\F_2$.
For a positive integer $q$,
define a spectral sequence $G_*$ with $E_2$ page given by
$G_2=\Delta(y)\otimes k[u]$,
$y$ in bidegree $(0,q)$, $u$ in bidegree $(q+1,0)$,
and $d_{q+1}(yu^j)=u^{j+1}$. 
$$\xymatrix@C-10pt@R-10pt{
k\cdot y\ar[rrd]^{\cong} & & k\cdot yu\ar[rrd]^{\cong} & & k\cdot yu^2
& \cdots\\
k\cdot 1 & & k\cdot u & & k\cdot u^2 & \cdots
}$$
Suppose that, for some positive
integer $a$, we have found elements
$y_i$ of $H^{2i}_{\Ho}((G/P)/k)$ for $1\leq i\leq a$ which are transgressive
in the spectral sequence $E_*$ above. Because $y_i$ is transgressive,
there is a map of spectral sequences $G_*\arrow E_*$
that takes the element $y$ (in degree $q=2i$)
to $y_i$. Since $E_*$ is a spectral
sequence of algebras, tensoring these maps gives a map
of spectral sequences
$$\alpha\colon
F_*:=G_*(y_1)\otimes \cdots \otimes G_*(y_a)\otimes k[u_2,u_4,\ldots,
u_{2r}]\arrow E_*.$$
(Here we are using that the elements $u_2,u_4,\ldots,u_{2r}$ are in
$H^*_{\Ho}(BG/k)$, which is row 0
of the $E_2$ page on the right, and so they are permanent cycles.)
Although we do not view the domain as a spectral sequence
of algebras, its $E_2$ page is the tensor product
of row 0 and column 0, and the map $\alpha\colon F_2\arrow E_2$
of $E_2$ pages
is the tensor product of the maps on row 0 and column 0.

Using these properties, we have
the following version of Zeeman's comparison theorem,
as sharpened by Hilton and Roitberg
\cite[Theorem VII.2.4]{MT}:

\begin{theorem}
\label{zeeman}
Let $N$ be a natural number.
Suppose that the homomorphism $\alpha\colon F_*\arrow E_*$
of spectral sequences
is bijective on $E_{\infty}^{i,j}$ for $i+j\leq N$
and injective for $i+j=N+1$, and that $\alpha$ is
bijective on row 0 of the $E_2$ page in degrees $\leq N+1$
and injective in degree $N+2$.
Then $\alpha$ is bijective on column 0 of the $E_2$
page in degree $\leq N$ and injective in degree $N+1$.
\end{theorem}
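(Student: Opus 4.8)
This is a form of Zeeman's comparison theorem, sharpened by Hilton and Roitberg, and in the end I would simply cite \cite[Theorem VII.2.4]{MT}; but here is the strategy I would follow to prove it. The plan is to induct on $N$. The base case $N=0$ is the classical five-term exact sequence argument: $E_2^{0,0}=E_\infty^{0,0}$ for both spectral sequences, and there is a commutative ladder with exact rows $0\arrow E_\infty^{0,1}\arrow E_2^{0,1}\xrightarrow{d_2}E_2^{2,0}$, so injectivity of $\alpha$ on $E_\infty^{0,1}$ and on $E_2^{2,0}$ forces injectivity on $E_2^{0,1}$. For the inductive step I would first observe that the hypotheses for $N$ imply those for $N-1$, so that by the inductive hypothesis $\alpha$ is already an isomorphism on $E_2^{0,j}$ for $j\leq N-1$ and a monomorphism on $E_2^{0,N}$; it then remains to upgrade the monomorphism on $E_2^{0,N}$ to an isomorphism and to prove the monomorphism on $E_2^{0,N+1}$.

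For those two improvements I would propagate the comparison through the pages $E_r$, using two structural features of a first-quadrant spectral sequence. A class in column $0$ of degree $q$ supports only outgoing differentials $d_r\colon E_r^{0,q}\arrow E_r^{r,q-r+1}$, so that $E_\infty^{0,q}\subseteq\cdots\subseteq E_3^{0,q}\subseteq E_2^{0,q}$, with equality from page $q+2$ on; dually, a class in row $0$ receives only incoming differentials, so that $E_2^{p,0}$ surjects onto $E_3^{p,0}$ and so on down to $E_\infty^{p,0}$. The targets $E_r^{r,q-r+1}$ of the differentials out of $E_r^{0,q}$ all have total degree $q+1$ and move steadily toward row $0$ as $r$ grows, landing on row $0$ at $r=q+1$, where $E_{q+1}^{q+1,0}$ is a quotient of $E_2^{q+1,0}$. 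Applying the four- and five-lemmas to the exact sequences $0\arrow E_{r+1}^{0,q}\arrow E_r^{0,q}\xrightarrow{d_r}E_r^{r,q-r+1}$ and $E_r^{p-r,r-1}\xrightarrow{d_r}E_r^{p,0}\arrow E_{r+1}^{p,0}\arrow 0$, I would convert the hypothesized agreement on $E_\infty$ (which pins down the limiting kernels in column $0$) and on row $0$ (which, through the displayed surjections, controls the images that land on row $0$) into agreement on $E_2^{0,q}$ in the required range. Here I would also use, as set up above, that the $E_2$ page of $F_*$ is the tensor product of its row $0$ and column $0$ and that $\alpha$ on $F_2$ is the corresponding tensor product, which lets one compute and compare the intermediate pages of $F_*$ in K\"unneth fashion.

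The hard part will be the degree bookkeeping: the whole point of the sharpened form is that hypotheses on $E_\infty$ only through total degree $N$, and on row $0$ only through degree $N+2$, are exactly enough to conclude about column $0$ through degree $N$, with nothing to spare. Keeping the indices tight while running the diagram chases through all pages $r=2,\dots,N+2$ is the delicate point, and I would organize it as in \cite[Chapter VII]{MT}, carrying at every stage of the induction an injectivity statement one degree beyond the corresponding bijectivity statement.
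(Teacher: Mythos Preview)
Your proposal is correct and matches the paper exactly: the paper does not prove this theorem at all but simply cites it as \cite[Theorem VII.2.4]{MT}, precisely as you suggest doing ``in the end.'' Your sketch of the inductive diagram-chase is additional material beyond what the paper provides, but it is a reasonable outline of the standard argument and not in conflict with anything in the paper.
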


The inductive step for computing the Hodge cohomology
of $BSO(n)$ is as follows.

\begin{lemma}
\label{induction}
Let $G$ be $SO(n)$ over $k=\F_2$, $P$ the parabolic subgroup above,
$r=\lfloor n/2\rfloor$, $s=\lfloor (n-1)/2\rfloor$.
Let $N$ be a natural number, and let $a= \min(s,\lfloor N/2\rfloor)$.
Then, for each $1\leq i\leq a$, there is an element
$y_i$ in $H^i(G/P,\Omega^i)$ with the following
properties. First,
$y_i$ is equal to $e_i$ modulo
polynomials in $e_1,\ldots,e_{i-1}$ with exponents $\leq 1$.
Also, each element $y_i$ is transgressive, and any lift $v_{2i+1}$
to $H^{i+1}(BG,\Omega^i)$ of the element
$\tau(y_i)$
has the property that
$$k[u_2,u_4,\ldots,u_{2r};v_3,v_5,\ldots,
v_{2a+1}]\arrow H^*_{\Ho}(BG/k)$$
is bijective in degree $\leq N+1$
and injective in degree $N+2$. Finally, each element $v_{2i+1}$
is equal to $u_{2i+1}$ modulo polynomials in $u_2,u_3,\ldots,u_{2i}$.

More precisely, if this statement holds for $N-1$, then it holds for $N$
with the {\it same }elements $y_i$, possibly with one added.
\end{lemma}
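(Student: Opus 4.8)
The plan is to argue by induction on $N$, with Zeeman's comparison theorem (Theorem \ref{zeeman}) as the engine, applied to the morphism $\alpha\colon F_*\arrow E_*$, where $E_*$ is the spectral sequence of Proposition \ref{ls} for the bundle $EG/P\arrow BG$ and $F_*=G_*(y_1)\otimes\cdots\otimes G_*(y_a)\otimes k[u_2,u_4,\ldots,u_{2r}]$ is the model built from the $y_i$. Three observations are used throughout. First, $\alpha$ is an isomorphism on $E_\infty$ in every degree: $E_\infty$ of $E_*$ is concentrated on row $0$ and equals $k[u_2,u_4,\ldots,u_{2r}]$, as shown just before the lemma, and the same is true of $F_*$ since each $G_*(y_i)$ has $E_\infty\cong k$. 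Second, on row $0$ of the $E_2$-pages $\alpha$ is the homomorphism $\phi$ of claim (c), while on column $0$ it is the map $\psi\colon\Delta(y_1,\ldots,y_a)\arrow\Delta(e_1,\ldots,e_s)=H^*_{\Ho}((G/P)/k)$, $y_i\mapsto y_i$. Third, $\phi$ is injective in all degrees: since $v_{2i+1}\equiv u_{2i+1}$ modulo polynomials in $u_2,\ldots,u_{2i}$, the source of $\phi$ is a polynomial subring of $k[u_2,u_3,\ldots,u_n]$, which injects into $H^*_{\Ho}(BG/k)$ by the injectivity already proved (so the choice of lift $v_{2i+1}$ is immaterial). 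Hence the content of (c) is surjectivity of $\phi$ in the stated range.

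For $N=0$ one has $a=0$, and $\phi\colon k[u_2,u_4,\ldots,u_{2r}]\arrow H^*_{\Ho}(BG/k)$ is bijective in degrees $0$ and $1$ (both sides are $k$ in degree $0$ and vanish in degree $1$, using $H^1(BG,O)=H^1(G,k)=0$ and $H^0(BG,\Omega^1)=H^{-1}(G,\g^*)=0$) and injective in degree $2$ since $0\neq u_2$. For the inductive step, suppose the statement holds for $N-1$ with elements $y_1,\ldots,y_{a'}$, $a'=\min(s,\lfloor(N-1)/2\rfloor)$. Theorem \ref{zeeman}, applied with $N-1$ in place of $N$, turns the row-$0$ bijectivity of $\mathcal{P}(N-1)$ into bijectivity of $\psi$ in degrees $\leq N-1$ and injectivity in degree $N$. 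If $N$ is odd, or $N$ is even with $N/2>s$, then $a=a'$; as column $0$ is concentrated in even degrees, $\psi$ is then bijective in degrees $\leq N$, so $\alpha$ is an isomorphism on $E_2$ in total degrees $\leq N$ and injective at bidegree $(N+1,0)$. A dimension count now forces $\alpha$ to be bijective at $(N+1,0)$ as well: the differentials landing in $E_r^{N+1,0}$ have source in total degree $N$, hence are matched by the model, whence $\dim E_2^{N+1,0}=\dim F_2^{N+1,0}$ because the abutments agree; equivalently, this is the fiber-to-base form of Zeeman's comparison theorem. This gives $\mathcal{P}(N)$ with the same $y_i$.

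The substantive case is $N$ even with $a=N/2\leq s$, where one new element $y_a\in H^a(G/P,\Omega^a)$ must be adjoined. By the above, $\psi$ embeds $\Delta(y_1,\ldots,y_{a-1})_{2a}$ into $\Delta(e_1,\ldots,e_s)_{2a}$, and a direct computation in $k[e_1,\ldots,e_s]/(e_i^2=e_{2i})$ shows the cokernel is one-dimensional, with $e_a$ mapping to a generator; so a transgressive lift, if one exists, has the shape $y_a=e_a+v$ with $v$ a squarefree combination of $e_1,\ldots,e_{a-1}$ in degree $2a$. To produce it: the differentials $d_r(e_a)$ for $2\leq r\leq 2a=N$ land in $E_r^{r,\,2a-r+1}$, of total degree $N+1$, a region in which (by $\mathcal{P}(N-1)$ on row $0$, by $H^1_{\Ho}(BG/k)=0$, and by the column-$0$ bijectivity above) $\alpha$ is an isomorphism off bidegree $(N+1,0)$ and is injective there. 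Since the $(a-1)$-model has no column-$0$ class in degree $2a$, each $d_r(e_a)$ must coincide with $d_r$ of an element of $\alpha\bigl(\Delta(y_1,\ldots,y_{a-1})_{2a}\bigr)$; correcting $e_a$ by that element, iterating over $r$, and absorbing at each step the decomposable ambiguity created by the relations $e_i^2=e_{2i}$, produces the transgressive $y_a$. Take $v_{2a+1}$ to be any lift to $H^{a+1}(BG,\Omega^a)$ of $\tau(y_a)=d_{2a+1}(y_a)$. Because $E_\infty^{0,2a}=0$, a transgressive $y_a$ cannot survive, so $\tau(y_a)\neq0$; moreover the abutment count above shows $H^{2a+1}_{\Ho}(BG/k)$ differs from the $(a-1)$-model by exactly one dimension in degree $2a+1$, which is therefore spanned by $v_{2a+1}$ modulo decomposables. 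Restricting to $BO(2)^r$ (respectively to $BH$ with $H=(\Z/2)^{r-1}\times(\mu_2)^r$ when $n$ is even) and invoking the Stiefel--Whitney-type formulas established before the lemma identifies this class with $u_{2a+1}$ modulo polynomials in $u_2,\ldots,u_{2a}$, which is the last assertion. Now $\psi$ for the $a$-model is bijective in degrees $\leq N$, so the abutment count of the previous paragraph applies verbatim to the $a$-model, yielding that $\phi$ is bijective in degrees $\leq N+1$ and injective in degree $N+2$; this completes the induction.

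The hard part is the construction of the transgressive $y_a$: it is the exact algebraic analogue of Borel's transgression theorem, and carrying it out means coordinating simultaneously the degree bookkeeping of Zeeman's comparison theorem, the nonstandard ring structure of $H^*_{\Ho}((G/P)/k)=k[e_1,\ldots,e_s]/(e_i^2=e_{2i})$, and the Stiefel--Whitney identities needed to name the transgression. Everything else in the induction is essentially formal once that step is in hand.
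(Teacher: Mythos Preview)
Your overall strategy---induction on $N$ with Zeeman's comparison theorem driving the argument---matches the paper's, but you run the key steps in the opposite order. The paper first enlarges row $0$: it chooses elements $z_i\in H^{N+1}_{\Ho}(BG/k)$ spanning a complement to the image of $k[u_{2j};v_{2i+1}]$ in degree $N+1$, observes that each $z_i$ (being a permanent cycle on row $0$ that dies in $E_\infty$) must be hit by a transgression $d_{N+1}$ from column $0$, and calls the source $w_i$. Only \emph{then} does it apply Theorem~\ref{zeeman} in the stated direction (hypotheses on $E_\infty$ and row $0$, conclusion on column $0$) to the enlarged model, and compares against the known $\Delta(e_1,\ldots,e_s)$ to force exactly one $w_i$ when $N$ is even and $N\leq 2s$; that $w_i$ is $y_a$. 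You instead apply Zeeman with $N-1$ to get column-$0$ information first, then try to build $y_a$ by hand, then recover row $0$ by a dimension count.

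The gap is in your direct construction of $y_a$. You assert that ``each $d_r(e_a)$ must coincide with $d_r$ of an element of $\alpha(\Delta(y_1,\ldots,y_{a-1})_{2a})$,'' citing only that $\alpha$ is an isomorphism at the target bidegree $(r,2a-r+1)$. But that only gives $d_r(e_a)\in\operatorname{im}(\alpha)$, not $d_r(e_a)\in d_r(\alpha|_{\text{column }0})$. Concretely, for $r=3$ and $a\geq 4$ the target $F_3^{3,2a-2}=v_3\cdot\Delta(y_1,\ldots,y_{a-1})_{2a-2}$ strictly contains the image $v_3\cdot\Delta(y_2,\ldots,y_{a-1})_{2a-2}$ of $d_3$ from column $0$; a term like $v_3\,y_1y_2$ is not hit. (One can rule out such terms using $d_3^2=0$ together with $\alpha$ being an isomorphism at total degree $N+2$, but you do not supply this, and the analogous check must be redone at each $r$.) Also, ``the $(a-1)$-model has no column-$0$ class in degree $2a$'' is false as written: $\Delta(y_1,\ldots,y_{a-1})_{2a}$ is typically nonzero. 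The paper's ordering sidesteps this whole construction: by finding $z_i$ first and reading off its transgressive source $w_i$, the existence of $y_a$ is automatic, and Zeeman is invoked only in the form actually stated.
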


We will apply Lemma \ref{induction}
with $N=\infty$, but the formulation with $N$ arbitrary
is convenient for the proof.

\begin{proof}
As discussed earlier, the $E_{\infty}$ page of the spectral sequence
$$E_2^{ij}=H^i_{\Ho}(BG/k)\otimes H^j_{\Ho}((G/P)/k)
\imp H^{i+j}_{\Ho}(BGL(r)/k)$$
is isomorphic to $k[u_2,u_4,\ldots,u_{2r}]$, concentrated on row 0.

We prove the lemma by induction on $N$. For $N=0$, it
is true, using that $H^0_{\Ho}(BG/k)=k$ and $H^1_{\Ho}(BG/k)=0$,
as one checks using our knowledge of the $E_{\infty}$ term.

We now assume the result for $N-1$, and prove it for $N$.
By the inductive assumption, for $b:=\min(s,\lfloor (N-1)/2\rfloor)$,
we can choose
$y_1,\ldots,y_b$ such that $y_i\in H^i(G/P,\Omega^i)$ is equal
to $e_i$ modulo
polynomials in $e_1,\ldots,e_{i-1}$ with exponents $\leq 1$,
$y_i$ is transgressive
for the spectral sequence, and, if we define $v_{2i+1}\in
H^{i+1}(BG,\Omega^i)$
to be any lift (from the $E_{2i+1}$ page to the $E_2$ page)
of the transgression $\tau(y_i)$
for $1\leq i\leq b$, the homomorphism
$$k[u_2,u_4,\ldots,u_{2r};v_3,v_5,\ldots,v_{2b+1}]
\arrow H^*_{\Ho}(BG/k)$$
is bijective in degree $\leq N$
and injective in degree $N+1$.
Finally,
the element $v_{2i+1}$ for $1\leq i\leq b$ is equal
to $u_{2i+1}$ modulo polynomials in $u_2,u_3,\ldots,u_{2i}$.

Also, by the injectivity in degree $N+1$ (above),
it follows that there is a set (possibly empty) of elements $z_i$
in $H^{N+1}_{\Ho}(BG/k)$ such that
$$\varphi\colon k[u_2,u_4,\ldots,u_{2r};
v_3,v_5,\ldots,v_{2b+1};z_i]\arrow H^*_{\Ho}(BG/k)$$
is bijective in degrees at most $N+1$. (Recall that
$b=\min(s,\lfloor (N-1)/2\rfloor)$.) The elements $z_i$
do not affect the domain of $\varphi$ in degree $N+2$
(because that ring is zero in degree 1). Therefore, $\varphi$
is injective in degree $N+2$, because
$$k[u_2,u_4,\ldots,u_{2r}; v_3,v_5,\ldots,
v_{2b+1}]\arrow H^*_{\Ho}(BG/k)$$
is injective. (This uses that $v_{2i+1}$ is equal to $u_{2i+1}$
modulo polynomials in $u_2,u_3,\ldots,u_{2i}$, together with
the injectivity of $k[u_2,u_3,\ldots,u_n]\arrow H^*_{\Ho}(BG/k)$,
shown earlier.)

The elements $z_i$ can be chosen to become zero in the $E_{\infty}$ page,
because the $E_{\infty}$ page is just $k[u_2,u_4,\ldots,
u_{2r}]$ on row 0. Therefore,
there are transgressive
elements $w_i\in H^N_{\Ho}((G/P)/k)$
with $z_i=\tau(w_i)$ in the $E_{N+1}$ page.
(If $z_i$ is killed before $E_{N+1}$,
we can simply take $w_i=0$.) By Zeeman's comparison theorem
(Theorem \ref{zeeman}),
the homomorphism
$$\psi\colon \Delta(y_1,\ldots,y_b;w_i)\arrow H^*_{\Ho}((G/P)/k)$$
is bijective in degrees $\leq N$ and injective in degree $N+1$.

Let $a=\min(s,\lfloor N/2\rfloor)$. 
We know that $\Delta(e_1,\ldots,e_a)\arrow H^*_{\Ho}((G/P)/k)$
is bijective in degrees $\leq N$. Since the elements $w_i$
are in degree $N$, while $b=\min(s,\lfloor (N-1)/2\rfloor)$,
we deduce that there
is no element $w_i$ if $N$ is odd or $N>2s$, and there is exactly one $w_i$
if $N$ is even and $N\leq 2s$. In the latter case, we have
$a=N/2$; in that case,
let $y_a$ denote the single element
$w_i$. Since we know that $H^*_{\Ho}((G/P)/k)=\Delta(e_1,\ldots,e_s)$,
$y_a$ must be equal
to $e_a$ modulo polynomials in $e_1,\ldots,e_{a-1}$
with exponents $\leq 1$. By construction, $y_a$ is transgressive.
Also,
in the case where $N$ is even and $N\leq 2s$,
let $v_{2a+1}$ in $H^{a+1}(BG,\Omega^a)$
be a lift to the $E_2$ page of the element
$\tau(y_a)$ (formerly called $z_i$).
Then we know that
$$\varphi\colon k[u_2,u_4,\ldots,u_{2r};
v_3,v_5,\ldots,v_{2a+1}]\arrow H^*_{\Ho}(BG/k)$$
is bijective in degree $\leq N+1$.
In the case where $N$ is even and $N\leq 2s$ (where we have added
one element $v_{2a+1}$ to those constructed before), this bijectivity
in degree $N+1=2a+1$ together with the injectivity
of $k[u_2,u_3,\ldots,u_n]\arrow H^*_{\Ho}(BG/k)$ in all degrees
implies that $v_{2a+1}$ must be equal to $u_{2a+1}$ modulo
polynomials in $u_2,u_3,\ldots,u_{2a}$. By the same injectivity,
it follows that $\varphi$ is injective in degree $N+2$.
\end{proof}

We can take $N=\infty$ in Lemma \ref{induction},
because the elements $y_1,\ldots,y_s$
do not change as we increase $N$. This gives
that $k[u_2,u_3,\ldots,u_n]\arrow H^*_{\Ho}(BSO(n)/k)$
is an isomorphism. (The element $v_{2i+1}$ produced by Lemma \ref{induction}
need not be the element $u_{2i+1}$ defined earlier, but $v_{2i+1}$
is equal to $u_{2i+1}$ modulo decomposable elements,
which gives this conclusion.)

Using the Hodge cohomology of $BSO(2r)$, we can compute
the Hodge cohomology of $BO(2r)$ over $k$ using
the Hochschild-Serre spectral
sequence of Lemma \ref{pi0}:
$$E_2^{ij}=H^i(\Z/2,H^j(BSO(2r),\Omega^*))\imp H^{i+j}(BO(2r),\Omega^*).$$
We have a homomorphism $k[u_1,u_2,\ldots,u_{2r}]\arrow BO(2r)$
whose composition to $BSO(2r)$ is surjective. Therefore, $\Z/2$
acts trivially on the Hodge cohomology of $BSO(2r)$, and all differentials
are zero on column 0 of this spectral sequence. It follows that
the spectral sequence degenerates at $E_2$, and hence
\begin{align*}
H^*_{\Ho}(BO(2r)/k)&\cong H^*(\Z/2,k)\otimes H^*_{\Ho}(BSO(2r)/k)\\
&\cong k[u_1,u_2,\ldots,u_{2r}].
\end{align*}

Finally, we show that the Hodge spectral sequence
$$E_1^{ij}=H^j(BG,\Omega^i)\imp H^{i+j}_{\dR}(BG/k)$$
degenerates for $G=SO(n)$ over $k$.
Indeed, by restricting to a maximal torus $T=(G_m)^r$
of $G$, the elements
$u_2,u_4,\ldots,u_{2r}$ restrict to the elementary
symmetric polynomials in the generators of $H^*_{\dR}(BT/k)
=k[t_1,\ldots,t_r]$. Therefore, the ring $k[u_2,u_4,\ldots,u_{2r}]$
injects into $H^*_{\dR}(BG/k)$. So
all differentials into the main diagonal $\oplus_i H^{i,i}$
of the Hodge spectral sequence for $BG$ are zero.
$$\xymatrix@C-10pt@R-10pt{
H^2(BG,\Omega^0)\ar[r]^{d_1}\ar@{-->}[rrd]^{d_2} & H^2(BG,\Omega^1)\ar[r]^{d_1}
  & H^2(BG,\Omega^2)\\
H^1(BG,\Omega^0)\ar[r]^{d_1}\ar@{-->}[rrd]^{d_2} & H^1(BG,\Omega^1)\ar[r]
  & 0\\
H^0(BG,\Omega^0)\ar[r] & 0\ar[r] & 0
}$$
It follows that all differentials are zero on the elements
$u_{2i+1}\in H^{i+1}(BG,\Omega^i)$: only $d_1$ maps $u_{2i+1}$
into a nonzero group, and that is on the main diagonal.
Also, all differentials are zero on the elements $u_{2i}$
in the main diagonal (since they map into zero groups).
This proves the degeneration of the Hodge spectral sequence.
Therefore, $H^*_{\dR}(BSO(n)/k)$ is isomorphic to $k[u_2,u_3,\ldots,u_n]$.

The same argument proves the degeneration of the Hodge spectral
sequence for $BO(2r)$. Therefore, $H^*_{\dR}(BO(2r)/k)$
is isomorphic to $k[u_1,u_2,\ldots,u_{2r}]$.

Finally, $O(2r+1)$ is isomorphic to $SO(2r+1)\times \mu_2$,
and so the calculation for $BO(2r+1)$ follows from those for
$BSO(2r+1)$ (above) and $B\mu_2$ (Proposition \ref{mup}),
by the K\"unneth theorem (Proposition \ref{kunneth}).
Theorem \ref{son} is proved.
\end{proof}

\begin{proof}
(Proposition \ref{whitney}) Let $2r$ and $2s$ be the ranks of the quadratic
bundles $E$ and $F$. The problem amounts to computing the restriction
from $BO(2r+2s)$ to $BO(2r)\times BO(2s)$ on Hodge cohomology
or de Rham cohomology.
We first compute $u(E\oplus F)$
in Hodge cohomology. The formula for $u_{2a}(E\oplus F)$ follows
from the definition of $u_{2a}$ in $H^a(BO(2r+2s),\Omega^a)$. (Since
$u_{2a}$ is in $H^a(BO(2r+2s),\Omega^a)$, its restriction
to the Hodge cohomology of $BO(2r)\times BO(2s)$ must be in
$H^a(BO(2r)\times BO(2s),\Omega^a)$, which explains why only the even
$u$-classes of $E$ and $F$ appear in the formula.) The formula
for $u_{2a+1}(E\oplus F)$ follows from the formula for
$u_{2a}(E\oplus F)$, using that $u_{2a+1}=\beta u_{2a}+u_1u_{2a}$.

In de Rham cohomology, the same formulas hold for $u(E\oplus F)$.
This uses that for any affine $k$-group scheme $G$, since $H^i(BG,\Omega^j)=0$
for $i<j$ by Theorem \ref{iso}, the subring $\oplus_i H^i(BG,\Omega^i)$
of Hodge cohomology canonically maps into de Rham
cohomology.
\end{proof}

\section{The spin groups}

In contrast to the other calculations in this paper,
we now exhibit a reductive group $G$
such that the mod 2 cohomology of the topological space $BG_{\C}$
is not isomorphic
to the de Rham cohomology of the algebraic stack $BG_{\F_2}$,
even additively. The example was suggested by the observation
of Feshbach, Benson, and Wood that the restriction
$H^*(BG_{\C},\Z)\arrow H^*(BT_{\C},\Z)^W$ fails to be surjective
for $G=\Spin(n)$ if $n\geq 11$ and $n\equiv 3,4,5\pmod{8}$
\cite{BW}. For simplicity, we work out the case of $\Spin(11)$.
It would be interesting to make a full computation
of the de Rham cohomology of $B\Spin(n)$ in characteristic 2.

\begin{theorem}
\label{spin}
$$\dim_{\F_2} H^{32}_{\dR}(B\Spin(11)/\F_2)
> \dim_{\F_2} H^{32}(B\Spin(11)_{\C},\F_2).$$
\end{theorem}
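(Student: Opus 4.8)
The plan is to compute both sides and compare. On the topological side, Quillen's theorem describes $H^*(B\Spin(11)_{\C},\F_2)$ as $\F_2[w_2,\dots,w_{11}]/J$ tensored with a polynomial algebra on one generator of degree $32$, where $J$ is generated by a regular sequence in degrees $2,3,5,9,17$ (the images of $w_2$ under the iterated Steenrod operations $\operatorname{Sq}^1,\operatorname{Sq}^2,\operatorname{Sq}^4,\operatorname{Sq}^8$). A routine computation with the resulting rational Poincar\'e series gives $\dim_{\F_2}H^{32}(B\Spin(11)_{\C},\F_2)=27$, so the whole problem reduces to showing $\dim_{\F_2}H^{32}_{\dR}(B\Spin(11)/\F_2)\ge 28$.

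For the de Rham side I would compute $H^*_{\Ho}(B\Spin(11)/\F_2)$ through total degree $32$ using the spectral sequence of Proposition \ref{ls} for a maximal parabolic $P\subset\Spin(11)$ with a manageable Levi: e.g. the parabolic with $\Spin(11)/P$ a smooth projective quadric $Q^9$, whose Levi quotient $L$ is isogenous by a central $\mu_2$ to $\Spin(9)\times G_m$. Here $H^*_{\Ho}(Q^9/\F_2)$ is the mod $2$ Chow ring of $Q^9$ (Proposition \ref{flag}), a small explicit ring; $H^*_{\Ho}(B\Spin(9)/\F_2)$ is accessible by the same recursion from $\Spin(7),\Sp(4),SL(2)$ (or agrees with topology, $9$ being a non-exceptional dimension); and $H^*_{\Ho}(BL/\F_2)$ is obtained from $H^*_{\Ho}(B(\Spin(9)\times G_m)/\F_2)$ via the $\mu_2$-isogeny — which is exactly the step that drags $H^*_{\dR}(B\mu_2/\F_2)=\F_2[c_1]\langle v_1\rangle$ (Proposition \ref{mup}) into the calculation. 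Running Proposition \ref{ls} then determines $H^*_{\Ho}(B\Spin(11)/\F_2)$ in the range we need. To pass from Hodge to de Rham I would imitate the proof of Theorem \ref{son}: restrict to a maximal torus $T$, observe that besides the symmetric functions $u_2,u_4,u_6,u_8,u_{10}$ the ring $\bigoplus_iH^i(B\Spin(11),\Omega^i)$ contains a degree-$32$ class pulling back injectively to $H^*_{\dR}(BT/\F_2)$, and use this to annihilate the Hodge--de Rham differentials landing on the diagonal through degree $32$.

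The reason the de Rham dimension overshoots $27$ is that $\bigoplus_iH^i(B\Spin(11),\Omega^i)$ is strictly larger than the diagonal part of $H^*(B\Spin(11)_{\C},\F_2)$. By Theorem \ref{iso} this graded ring is $O(\so(11))^{\Spin(11)}$, and since no root of $\Spin(11)$ is divisible by $2$ in the weight lattice, Theorem \ref{invariant} identifies it with $O(\t)^W=S(X^*(T)\otimes\F_2)^W$ for $X^*(T)$ the weight lattice of $B_5$. In characteristic $2$ the sign-change part of $W$ acts on $X^*(T)\otimes\F_2$ by transvections translating the spinor weight $\omega$ by the $16$-element subspace $U=\langle\epsilon_1,\dots,\epsilon_5\rangle$, so this invariant ring is $\F_2[e_2,e_3,e_4,e_5,N]$ with a Dickson-type generator $N=\prod_{v\in U}(\omega+v)$ of de Rham degree $32$ — precisely the class whose integral lift Feshbach, Benson and Wood found is not in the image of $H^*(B\Spin(11)_{\C},\Z)\to H^*(BT_{\C},\Z)^W$. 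Counting monomials, $O(\t)^W$ already contributes $18$ to total degree $32$, and the remaining surplus comes from the classes in $H^{>0}(\Spin(11),S^i(\so(11)^*))$ (nonzero because $2$ is a torsion prime) together with products of the odd classes $u_{2a+1}$; all of these survive the Hodge--de Rham spectral sequence by the degeneration above. The structural source of the extra classes is that $\F_2[c_1]\langle v_1\rangle$ differs \emph{as a ring} from $H^*(B\Z/2,\F_2)=\F_2[y]$ — the degree-$2$ class $c_1$ is a polynomial generator, not the square $y^2$ — so the $\mu_2$-isogenies appearing in the Levi subgroups break the collapse pattern that forces Quillen's answer topologically.

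The main obstacle is that middle computation: pinning down the differentials of the parabolic spectral sequence precisely enough through degree $32$. In topology the analogous computation is rigidified by the Steenrod algebra — the transgressions form the forced chain $w_2\mapsto\operatorname{Sq}^1w_2\mapsto\operatorname{Sq}^2\operatorname{Sq}^1w_2\mapsto\cdots$ via the Kudo transgression theorem — whereas over $\F_2$ no such operations are available to force the de Rham $d_r$'s, and the presence of subquotients of $S(X^*(T)\otimes\F_2)^W$ (notably $N$) on the $E_2$ page obstructs the topological collapse. I expect one has to compute the relevant $d_r$'s directly, using the Bockstein coming from the lift of $\Spin(11)$ to $\Z/4$, the ring structure, and comparison with $B\mu_2\subset B\Spin(11)$ (Proposition \ref{mup}) as structural input, and then conclude by a Zeeman-type comparison argument exactly as in the proof of Theorem \ref{son}. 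Once the spectral sequence is under control, the inequality is a finite counting check.
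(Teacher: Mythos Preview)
Your identification of the key structural point is correct: $O(\t)^W \cong O(\g)^G$ for $G=\Spin(11)$ is a polynomial ring with an extra generator (your $N$, the paper's $\eta_4$) in de Rham degree $32$, and this class injects into $H^*_{\dR}(BG/k)$ by the retraction argument via $O(\g)^G \to H^*_{\dR}(BG/k) \to O(\t)^W$. But two things need correction, and the second is the real gap.

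First, your statement of Quillen's theorem for $\Spin(11)$ is off: here $n=8\cdot 1+3$ gives $h=6$, so the polynomial generator $w_{2^h}(\Delta_\theta)$ is in degree $64$, not $32$, and $J$ has \emph{six} elements, the last ($\Sq^{16}\cdots\Sq^1 w_2$) in degree $33$. Neither correction changes $H^{32}$, but your stated setup would overcount by one in degree $32$.

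Second, and more importantly, your plan to compute $H^*_{\Ho}(B\Spin(11)/\F_2)$ through degree $32$ via the parabolic spectral sequence for $Q^9$ is exactly the hard road, and you correctly flag that you cannot control the differentials. The paper sidesteps this entirely. Rather than compute the Hodge cohomology of $B\Spin(11)$, it produces two \emph{independent} lower bounds for $H^{32}_{\dR}(B\Spin(11)/k)$:
\begin{itemize}
\item the image of restriction from $H^*_{\dR}(BSO(11)/k)=k[u_2,\ldots,u_{11}]$ (already known by Theorem \ref{son}), bounded below by restricting further to an abelian subgroup $L\cong(\mu_2)^5\times\Z/2\subset\Spin(11)$ constructed from $-1\in W$; one computes the image in $H^*_{\dR}(BL/k)/\mathrm{rad}$ to be isomorphic to $k[u_4,u_6,u_7,u_8,u_{10},u_{11}]/(u_{11}u_6+u_{10}u_7)$;
\item the diagonal class $\eta_4\in\bigoplus_i H^i(B\Spin(11),\Omega^i)\cong O(\t)^W$.
\end{itemize}
The first lower bound already matches $\dim_k H^{32}(B\Spin(11)_{\C},k)$ on the nose, since Quillen's extra relation lives in degree $33$ and his extra generator in degree $64$. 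The class $\eta_4$ is then seen to be linearly independent of the image from $BSO(11)$ by restricting to the maximal torus $T$: the image of $H^*_{\dR}(BSO(11)/k)$ in $H^*_{\dR}(BT/k)$ is only $k[c_2,\ldots,c_5]$, whereas $\eta_4$ hits the new generator. This gives the strict inequality with no spectral sequence analysis for $\Spin(11)$ whatsoever. Your proposed route, even if completed, would be doing far more work than the statement requires.
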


\begin{proof}
Let $k=\F_2$. Let $n$ be an integer at least 6; eventually, we will restrict
to the case $n=11$.
Let $G$ be the split group $\Spin(n)$ over $k$, and let $T$
be a maximal torus in $G$. Let $r=\lfloor n/2\rfloor$. The Weyl
group $W$ of $G$ is $S_r\ltimes (\Z/2)^{r}$ for $n=2r+1$,
and the subgroup $S_r\ltimes (\Z/2)^{r-1}$ for $n=2r$.
We start by computing the ring
$O(\t)^W$ of $W$-invariant functions on the Lie algebra $\t$ of $T$.

First consider the easier case where $n$ is odd, $n=2r+1$.
The element $-1$ in $(\Z/2)^r\subset W$ acts as the identity
on $\t$, since we are in characteristic 2. The ring
$O(\t)^W$ can also be viewed as
$S(X^*(T)\otimes k)^W$.
Computing this ring is similar to, but simpler than, Benson and Wood's
calculation of $S(X^*(T))^W=H^*(BT_{\C},\Z)^W$ \cite{BW}. We follow their
notation.

We have
$$S(X^*(T))\cong \Z[x_1,\ldots,x_r,A]/(2A=x_1+\cdots+x_r),$$
by thinking of $T$ as the double cover of a maximal torus
in $SO(2r+1)$. The symmetric group $S_r$ in $W$ permutes $x_1,\ldots,x_r$
and fixes $A$. The elementary abelian group $E_r=(\Z/2)^r$ in $W$, with
generators $\epsilon_1,\ldots,\epsilon_r$, acts by: $\epsilon_i$
changes the sign of $x_i$ and fixes $x_j$ for $j\neq i$,
and $\epsilon_i(A)=A-x_i$. So
$$S(X^*(T)\otimes k)\cong k[x_1,\ldots,x_r,A]/(x_1+\cdots+x_r).$$
Note that $-1:=\epsilon_1\cdots\epsilon_r$ in $W$ acts as the identity
on $S^*(X^*(T)\otimes k)$.

We first compute the invariants of the subgroup $E_r$
on $S(X^*(T)\otimes k)$,
using the following lemma.

\begin{lemma}
\label{inv2}
Let $R$ be an $\F_2$-algebra which is a domain,
$S$ the polynomial ring $R[x]$,
and $a$ a nonzero element of $R$. Let $G=\Z/2$ act on $S$
by fixing $R$ and sending $x$ to $x+a$. Then the ring of invariants is
$$S^G=R[u],$$
where $u=x(x+a)$.
\end{lemma}

\begin{proof}
Clearly $u=x(x+a)$ in $S$ is $G$-invariant. Since $u$ is a monic
polynomial of degree 2 in $x$, we have $S=R[u]\oplus x\cdot R[u]$.
Let $\sigma$ be the generator of $G=\Z/2$. Any element
of $S$ can be written as $f+xg$ for some (unique) elements
$f,g\in R[u]$. If $f+xg$ is $G$-invariant, then
$0=\sigma(f+xg)-(f+xg)=(x+a)g-xg=ag$. Since $a$ is a non-zero-divisor
in $R$, it is a non-zero-divisor in $R[u]$; so $g=0$. Thus
$S^G=R[u]$.
\end{proof}

Let $E_j\cong (\Z/2)^j$ be the subgroup of $W$ generated by $\epsilon_1,
\ldots,\epsilon_j$. Let
$$\eta_j=\prod_{I\subset \{1,\ldots,j\} }\bigg( A-\sum_{i\in I}x_i\bigg) ,$$
which is $E_j$-invariant. Here $\eta_j$ has degree $2^{j}$
in $S^*(X^*(T)\otimes k)$. By Lemma \ref{inv2} (with $R=k[x_1,
\ldots,x_r]/(x_1+\cdots+x_r)$) and induction on $j$,
we have
$$S^*(X^*(T)\otimes k)^{E_j}=k[x_1,\ldots,x_r,\eta_j]/(x_1+\cdots+x_r=0)$$
for $1\leq j\leq r-1$. Since $-1=\epsilon_1\cdots\epsilon_r$
acts as the identity on these rings, we also have
$$S^*(X^*(T)\otimes k)^{E_r}
=k[x_1,\ldots,x_r,\eta_{r-1}]/(x_1+\cdots+x_r=0).$$

The symmetric group $S_r$ permutes $x_1,\ldots,x_r$,
and it fixes $\eta_{r-1}$. Therefore, computing the invariants
of the Weyl group on $S^*(X^*(T)\otimes k)$ reduces to computing
the invariants of the symmetric group $S_r$ on
$R=k[x_1,\ldots,x_r]/(x_1+\cdots+x_r)$. Write $c_1,\ldots,c_r$
for the elementary symmetric polynomials in $k[x_1,\ldots,x_r]$.
For $r\geq 3$, the ring of invariants $R^{S_r}$
is equal to $k[c_1,\ldots,c_r]/(c_1)=
k[c_2,\ldots,c_r]$ \cite[Proposition 4.1]{Nakajima}.

The answer is different for $r=2$: then
$S_2$ acts trivially on $R=k[x_1,x_2]/(x_1+x_2)$,
and so $R^{S_2}=R=k[x_1]$.

Combining these calculations with the earlier ones,
we have found the invariants for the Weyl group $W$ of $G=\Spin(2r+1)$:
for $r\geq 1$,
$$S^*(X^*(T)\otimes k)^W=\begin{cases} k[c_2,\ldots,c_r,\eta_{r-1}]
&\text{if }r\neq 2,\\
k[x_1,\eta_1] & \text{if }r=2.
\end{cases}$$
Here $|c_i|=i$ for $2\leq i\leq r$, $|x_1|=2$,
and $|\eta_{r-1}|=2^{r-1}$.

We now compute $S^*(X^*(T)\otimes k)^W$ for $G=\Spin(2r)$. Note that a maximal
torus in $\Spin(2r)$ is also a maximal torus in $\Spin(2r+1)$. So we have
again
$$S^*(X^*(T)\otimes k)\cong k[x_1,\ldots,x_r,A]/(x_1+\cdots+x_r).$$
The Weyl group $W=S_r\ltimes (\Z/2)^{r-1}$ acts on this ring by: $S_r$
permutes $x_1,\ldots,x_r$, and fixed $A$, and $(\Z/2)^{r-1}$ is the subgroup
$\langle \epsilon_1\epsilon_2,\ldots,\epsilon_1\epsilon_r\rangle$
in the notation above. Thus $\epsilon_1\epsilon_j$ fixes each $x_j$
(since we are working modulo 2) and sends $A$ to $A-x_1-x_j$.

For $1\leq j\leq r$, let $F_j$ be the subgroup $\langle \epsilon_1\epsilon_2,
\ldots,\epsilon_1\epsilon_j\rangle\cong (\Z/2)^{j-1}\subset W$.
Let
$$\mu_j=\prod_{\substack{I\subset \{1,\ldots,j\}\\ |I|\text{ even}}}
\bigg( A-\sum_{i\in I}x_i\bigg) .$$
Then $|\mu_j|=2^{j-1}$ and $\mu_1=A$. Clearly $\mu_j$ is $F_j$-invariant.
Benson and Wood observed (or one can check directly)
that if $r$ is even and $r\geq 4$, then $\mu_{r-1}$ is in
fact $W$-invariant, while if $r$ is odd and $r\geq 3$, then $\mu_r$
is $W$-invariant \cite[Proposition 4.1]{BW}.

For $1\leq j\leq r-1$, an induction on $j$ using Lemma \ref{inv2}
gives that
$$S^*(X^*(T)\otimes k)^{F_j}=k[x_1,\ldots,x_r,\mu_j]/(x_1+\cdots+x_r).$$
If $r$ is even, then $-1:=\epsilon_1\cdots \epsilon_r$ is in $F_r\subset W$,
and it acts trivially on $S^*(X^*(T)\otimes k)$.
Therefore, for $r$ even, we have
$$S^*(X^*(T)\otimes k)^{F_r}=k[x_1,\ldots,x_r,\mu_{r-1}]/(x_1+\cdots+x_r).$$
If $r$ is odd, then we can apply Lemma \ref{inv2} one more time,
yielding that
$$S^*(X^*(T)\otimes k)^{F_r}=k[x_1,\ldots,x_r,\mu_{r}]/(x_1+\cdots+x_r).$$

The subgroup $S_r\subset W$ permutes $x_1,\ldots,x_r$, and fixes
$\mu_{r-1}$, resp.\ $\mu_r$. We showed above that
$$k[x_1,\ldots,x_r]/(x_1+\cdots+x_r)^{S_r}=k[c_2,\ldots,c_r].$$
Therefore, for $G=\Spin(2r)$, we have
$$S^*(X^*(T)\otimes k)^W=\begin{cases} k[c_2,\ldots,c_r,\mu_{r-1}]
&\text{if $r$ is even}\\
k[c_2,\ldots,c_r,\mu_{r}] &\text{if $r$ is odd.}
\end{cases}$$
Here $|c_i|=i$ for $2\leq i\leq r$ and $|\mu_{r-1}|=2^{r-2}$,
resp.\ $|\mu_r|=2^{r-1}$.

Thus we have determined $S^*(X^*(T)\otimes k)^W$ for $G=\Spin(n)$ for all $n$,
even or odd. Now think of $G=\Spin(n)$
as a split reductive group over $k$.
By Theorem \ref{invariant}, the ring $S^*(X^*(T)\otimes k)^W=O(\t)^W$
can be identified with $O(\g)^G$
for all $n\geq 6$.
(The exceptional cases $\Spin(3), \Spin(4), \Spin(5)$
are the spin groups that have a factor isomorphic to a symplectic group:
$\Spin(3)\cong \Sp(2)$, $\Spin(4)\cong \Sp(2)\times \Sp(2)$,
and $\Spin(5)\cong \Sp(4)$.)
We deduce that for $n\geq 6$,
$$O(\g)^G=\begin{cases} k[c_2,\ldots,c_r,\eta_{r-1}]
&\text{if }n=2r+1\\
k[c_2,\ldots,c_r,\mu_{r-1}]
&\text{if }n=2r\text{ and $r$ is even}\\
k[c_2,\ldots,c_r,\mu_{r}]
&\text{if }n=2r\text{ and $r$ is odd}.
\end{cases}$$

For $G=\Spin(n)$ and any $n\geq 6$, we have homomorphisms
$$O(\g)^G\arrow H^*_{\dR}(BG/k)\arrow H^*_{\dR}(BT/k)^W=O(\t)^W,$$
whose composition is the obvious inclusion. (The first homomorphism
comes from the isomorphism of $O(\g)^G$ with $\oplus_i H^i(BG_{k},
\Omega^i)$, using that $H^i(BG_{k},\Omega^j)=0$ for $i<j$.)
In this case, the restriction $O(\g)^G\arrow O(\t)^W$ is a bijection.
So $H^*_{\dR}(BG/k)$ contains the ring computed above (with degrees
multiplied by 2), and retracts onto
it. It follows that for all $n\geq 6$, $H^*_{\dR}(BG/k)$ has
an indecomposable generator in degree $2^r$ if $n=2r+1$,
in degree $2^{r-1}$ if $n=2r$ and $r$ is even, and in degree
$2^r$ if $n=2r$ and $r$ is odd. (For this argument, we do not need
to find all the indecomposable generators of $H^*_{\dR}(BG/k)$.)

Compare this with Quillen's calculation of the cohomology
of the classifying space
of the complex reductive group $\Spin(n)_{\C}$, or equivalently
of the compact Lie group $\Spin(n)$
\cite[Theorem 6.5]{Quillenspin}:
$$H^*(B\Spin(n)_{\C},k)\cong H^*(BSO(n)_{\C},k)/J\otimes k[w_{2^h}
(\Delta_{\theta})].$$
Here $\Delta_{\theta}$ is a faithful orthogonal representation
of $\Spin(n)_{\C}$ of minimal dimension,
and $J$ is the ideal generated by the regular sequence
$$w_2,\Sq^1 w_2,\ldots,\Sq^{2^{h-2}}\cdots\Sq^2\Sq^1 w_2$$
in the polynomial ring $H^*(BSO(n)_{\C},k)=k[w_2,w_3,\ldots,
w_n]$, where $|w_i|=i$. Finally, the number $h$ is
given by the following table:

$$\begin{array}{cc}
n & h\\
8l+1 & 4l+0\\
8l+2 & 4l+1\\
8l+3 & 4l+2\\
8l+4 & 4l+2\\
8l+5 & 4l+3\\
8l+6 & 4l+3\\
8l+7 & 4l+3\\
8l+8 & 4l+3
\end{array}$$

The Steenrod operations on the mod 2 cohomology of $BSO(n)_{\C}$,
as used in the formula above, are known, by Wu's formula
\cite[Theorem III.5.12]{MT}:
$$\Sq^i w_j=\sum_{l=0}^i \binom{j-l-1}{i-l}w_lw_{i+j-l}$$
for $0\leq i\leq j$, where by convention $\binom{-1}{0}=1$.

Write $r=\lfloor n/2\rfloor$. If $n=2r+1$, then
the generator $w_{2^h}(\Delta_{\theta})$ is in degree
$2^r$ if $r\equiv 0,3\pmod{4}$ and in degree $2^{r+1}$
if $r\equiv 1,2\pmod{4}$. If $n=2r$, then the generator
$w_{2^h}(\Delta_{\theta})$ is in degree $2^{r-1}$ if $r\equiv 0\pmod{4}$
and in degree $2^r$ if $r\equiv 1,2,3\pmod{4}$. Therefore, for $n\geq 11$,
$H^*(B\Spin(n)_{\C},k)$ has no indecomposable
generator in degree $2^r$ if $n\equiv 3,5\pmod{8}$, and no indecomposable
generator in degree $2^{r-1}$ if $n\equiv 4\pmod{8}$. But $H^*_{\dR}(BG/k)$
does have an indecomposable generator in the indicated degree $2^a$, as shown
above. Thus, for $G=\Spin(n)$, $H^*(BG_{\C},k)$ is not isomorphic
to $H^*_{\dR}(BG/k)$ as a graded ring when $n\geq 11$
and $n\equiv 3,4,5\pmod{8}$.

We want to show, more precisely, that for $n=11$,
$H^{32}_{\dR}(BG/k)$ is bigger
than $H^{32}(BG_{\C},k)$. We know
the cohomology of $BG_{\C}$ by Quillen (above), and so it remains to give
a lower bound for the de Rham cohomology of $BG$ over $k$.

We do this by restricting to a suitable
abelian $k$-subgroup scheme of $G=\Spin(n)$. Assume that
$n\not\equiv 2\pmod{4}$; this includes the case $\Spin(11)$ that we
are aiming for. Then the Weyl group $W$ of $\Spin(n)$ contains $-1$.
So $\Spin(n)$ contains an extension of $\Z/2$
by a split maximal torus $T\cong (G_m)^r$,
where $\Z/2$ acts by inversion on $T$. Let $L$ be the subgroup
of the form $1\arrow T[2]\arrow L\arrow \Z/2\arrow 1$; then $L$ is abelian
(because inversion is the identity on $T[2]\cong (\mu_2)^r$). Since
the field $k=\F_2$ is perfect, the reduced locus of $L$ is a $k$-subgroup
scheme (isomorphic to $\Z/2$) \cite[Corollary 1.39]{Milne},
and so the extension splits. That is,
$L\cong (\mu_2)^r\times \Z/2$.

Let us compute the pullbacks of the generators $u_i$
of $H^*_{\dR}(BSO(n)/k)$ (Theorem \ref{son})
to the subgroup $L$ of $G=\Spin(n)$.
It suffices to compute the restrictions of the classes $u_i$
to the image $K$ of $L$ in $SO(n)$; clearly $K\cong (\mu_2)^{r-1}\times
\Z/2$. In notation similar to that
used earlier in this proof, the ring of polynomial
functions on the Lie algebra of the subgroup $(\mu_2)^{r-1}$ here is
$$k[t_1,\ldots,t_r]/(t_1+\cdots+t_r).$$
This ring can be viewed as the Hodge cohomology ring of $B(\mu_2)^{r-1}$
modulo its radical,
with the generators $t_i$ in $H^1(B(\mu_2)^{r-1},\Omega^1)$
(by Propositions \ref{mup}
and \ref{kunneth}). Using Lemma \ref{finite}, we conclude that
$$H^*_{\Ho}(BK/k)/\rad\cong k[s,t_1,\ldots,t_r]/(t_1+\cdots+t_r),$$
where $s$ is pulled back from the generator of $H^1(B(\Z/2),O)$.
The Hodge spectral sequence for $BK$ degenerates at $E_1$,
since we know this degeneration for
$B\Z/2$ and $B(\mu_2)^{r-1}$.
Therefore,
$$H^*_{\dR}(BK/k)/\rad\cong k[s,t_1,\ldots,t_r]/(t_1+\cdots+t_r),$$

Note that the surjection $L\arrow K$ is split. So
if we compute that an element
of $H^*_{\dR}(BSO(n)/k)$ has nonzero restriction to $K$, then it
has nonzero restriction to $L$, hence a fortiori to $G=\Spin(n)$.

Now strengthen the assumption $n\not\equiv 2\pmod{4}$ to
assume that $n$ is odd and $n\geq 7$.
In the proof of Theorem \ref{son}, we computed the restriction
of $u_2,u_3,\ldots,u_{2r+1}$ from $SO(2r+1)$ to its subgroup
$O(2)^r$, and hence to its subgroup $(\mu_2)^r\times (\Z/2)^r$.
(We worked there in Hodge cohomology, but the formulas remain
true in de Rham cohomology.)
We now want to restrict to the smaller subgroup $K=(\mu_2)^{r-1}
\times \Z/2$. This last step sends $H^*_{\dR}(B((\mu_2)^r\times
(\Z/2)^r)/k)/\rad=k[s_1,\ldots,s_r,t_1,\ldots,t_r]$
to $H^*_{\dR}(BK/k)/\rad
=k[s,t_1,\ldots,t_r]/(t_1+\cdots+t_r)$ by $s_i\mapsto s$ for all $i$
and $t_i\mapsto t_i$. By the formulas from the proof of Theorem
\ref{son}, the element $u_{2a}$ (for $1\leq a\leq r$)
restricts to the elementary symmetric polynomial
$$c_a=\sum_{1\leq i_1<\cdots<i_a\leq r}
t_{i_1}\cdots t_{i_a}.$$
Thus $u_2$ restricts to 0 on $K$, but $u_4,u_6,\ldots,u_{2r}$ restrict
to generators of the polynomial ring
$$(k[t_1,\ldots,t_r]/(t_1+\cdots+t_r))^{S_r}\subset H^*_{\dR}(BK/k)/\rad,$$
using that $r\geq 3$, as discussed earlier in this section.

Next, using notation from the proof of Theorem \ref{son},
for $1\leq a\leq r$,
the restriction of $u_{2a+1}$ to $H^*_{\dR}(BK/k)/\rad$
is (first restricting
from $SO(2r+1)$ to its subgroup $(\mu_2)^r\times (\Z/2)^r$,
and then to $K=(\mu_2)^{r-1}\times \Z/2$):
\begin{align*}
u_{2a+1}&\mapsto \sum_{1\leq i_1<\cdots<i_a\leq r}\bigg( \sum_{j=1}^a
s_{i_j} \bigg) t_{i_1}\cdots t_{i_a}\\
&\mapsto asu_{2a}.
\end{align*}
Thus, for all $1\leq a\leq r$,
$u_{2a+1}$ restricts in $H^*_{\dR}(BK/k)/\rad$ to $su_{2a}$ if $a$ is odd,
and otherwise to zero. (But $u_2$ restricts to 0, and so this also means
that $u_3$ restricts to 0.)

This gives a lower bound for the image of $H^*_{\dR}(BSO(n)/k)
\arrow H^*_{\dR}(B\Spin(n)/k)$ for $n$ odd. In particular,
for $n=11$, this image has Hilbert series at least that of
the ring
$$k[u_4,u_6,u_7,u_8,u_{10},u_{11}]/(u_{11}u_6+u_{10}u_7),$$
since the latter ring is isomorphic to the image
of restriction from $SO(11)$ to $H^*_{\dR}(BL/k)/\rad$,
where $L\subset \Spin(11)$.

We now compare this to Quillen's computation (above)
in the case of $\Spin(11)$:
\begin{multline*}
H^*(B\Spin(11)_{\C},k)=k[w_4,w_6,w_7,w_8,w_{10},w_{11},
w_{64}(\Delta_{\theta})]/(w_{11}w_6+w_{10}w_7,\\
w_{11}^3+w_{11}^2w_7w_4+w_{11}w_8w_7).
\end{multline*}
Since the last generator $w_{64}(\Delta_{\theta})$
is in degree 64 and the last
relation is in degree 33, the degree-32 component
of this ring has the same dimension as the degree-32 component
of the lower bound above for $H^*_{\dR}(B\Spin(11)/k)$.
However, earlier in this section,
we showed that $H^*_{\dR}(B\Spin(11)/k)$ has an extra generator
$\mu_5$ in degree 32. This is linearly independent of
the image of restriction from $SO(11)$, as we see by restricting
to a maximal torus $T$ in $\Spin(11)$. Indeed, we showed earlier in this
section that the image of $H^*_{\dR}(B\Spin(11)/k)\arrow H^*_{\dR}(BT/k)$
is the polynomial ring
$k[c_2,\ldots,c_5,\mu_5]$,
whereas the image of the pullback from $SO(11)$ to
$T\subset \Spin(11)$ is just $k[c_2,\ldots,
c_5]$ ($=k[w_4,w_6,w_8,w_{10}]$). Thus we have shown that
$$\dim_k H^{32}_{\dR}(B\Spin(11)/k)>\dim_k H^{32}(B\Spin(11)_{\C},k).$$
\end{proof}

% Omit these bibliography lines if there's no bibliography.

\small \sc UCLA Mathematics Department, Box 951555,
Los Angeles, CA 90095-1555

totaro@math.ucla.edu
\end{document}